\documentclass{amsart}
\usepackage{amsmath, amssymb, enumerate} 

\usepackage{colonequals}

\usepackage{eucal}

\usepackage{hyperref}

\makeatletter
\@namedef{subjclassname@2020}{\textup{2020} Mathematics Subject Classification}
\makeatother

\usepackage{tikz-cd}
\usetikzlibrary{decorations.pathmorphing}
\usepackage[nobysame]{amsrefs}


\numberwithin{equation}{section}


\theoremstyle{plain}
\newtheorem{theorem}[equation]{Theorem}
\newtheorem{ithm}{Theorem}

\newtheorem{proposition}[equation]{Proposition}
\newtheorem{lemma}[equation]{Lemma}
\newtheorem{corollary}[equation]{Corollary}

\theoremstyle{definition}

\newtheorem{example}[equation]{Example}
\newtheorem{conjecture}[equation]{Conjecture}
\newtheorem{icon}{Conjecture}

\newtheorem{chunk}[equation]{}

\theoremstyle{remark}

\newtheorem*{ack}{Acknowledgements}

\newtheorem*{claim}{Claim}

\newcommand*{\intref}[2]{\def\tmp{#1}\ifx\tmp\empty\hyperref[#2]{\ref*{#2}}\else\hyperref[#2]{#1~\ref*{#2}}\fi}

\newcommand{\ann}{\operatorname{ann}}

\newcommand{\CH}[3]{\operatorname{H}^{#1}(#2, #3)}

\newcommand{\chr}{\operatorname{char}}
\newcommand{\cat}[1]{\mathcal{#1}}

\newcommand{\depth}{\operatorname{depth}}
\newcommand{\euler}[1]{{\mathbb E}(#1)}
\newcommand{\eulerk}[1]{{\mathbb K}(#1)}

\newcommand{\gr}{\operatorname{gr}}
\newcommand{\grade}{\operatorname{grade}}
\newcommand{\grog}[1]{\operatorname{G}_{0}(#1)}
\newcommand{\rgrog}[1]{\overline{\operatorname{G}_{0}}(#1)}

\newcommand{\kos}[2]{K(#1;#2)}
\newcommand{\hh}[1]{\operatorname{H}(#1)}
\newcommand{\HH}[2]{\operatorname{H}_{#1}(#2)}

\newcommand{\height}{\operatorname{height}}
\newcommand{\hilb}{\operatorname{Hilb}}
\renewcommand{\hom}{\operatorname{Hom}}
\newcommand{\knot}[1]{\operatorname{K}_0^{\mathfrak{m}}(#1)}
\newcommand{\rknot}[1]{\overline{\operatorname{K}_0^{\mathfrak{m}}}(#1)}
\newcommand{\length}{\ell}
\newcommand{\lch}[3]{\operatorname{H}_{#2}^{#1}(#3)}

\newcommand{\pair}[2]{\langle #1,#2\rangle}
\newcommand{\pdim}{\operatorname{proj\,dim}}
\newcommand{\perf}[1]{\mathcal{F}^{\mathfrak{m}}(#1)}
\newcommand{\pos}[1]{[\!|#1]\!]}
\newcommand{\proj}[1]{\operatorname{Proj}(#1)}

\newcommand{\rank}{\operatorname{rank}}

\newcommand{\shift}{\mathsf{\Sigma}}
\newcommand{\spec}{\operatorname{Spec}}

\newcommand{\syz}{\Omega}

\newcommand{\tor}{\operatorname{Tor}}


\newcommand{\cale}{{\mathcal E}}
\newcommand{\calf}{{\mathcal F}}
\newcommand{\call}{{\mathcal L}}
\newcommand{\calm}{{\mathcal M}}
\newcommand{\caln}{{\mathcal N}}

\newcommand{\calo}{{\mathcal O}}
\newcommand{\calu}{{\mathcal U}}

\newcommand{\bbP}{{\mathbb P}}
\newcommand{\bbQ}{{\mathbb Q}}
\newcommand{\bbZ}{{\mathbb Z}}
\newcommand{\bbR}{{\mathbb R}}

\newcommand{\fm}{\mathfrak{m}}
\newcommand{\fn}{\mathfrak{n}}

\newcommand{\fp}{\mathfrak{p}}

\newcommand{\bsa}{\boldsymbol a}
\newcommand{\bsb}{\boldsymbol b}
\newcommand{\bsd}{\boldsymbol d}
\newcommand{\bsr}{\boldsymbol r}
\newcommand{\bss}{\boldsymbol s}
\newcommand{\bsx}{\boldsymbol x}
\newcommand{\bsy}{\boldsymbol y}

\newcommand{\vf}{\varphi}

\newcommand{\lra}{\longrightarrow}
\newcommand{\xra}{\xrightarrow}

\begin{document}

\title[Multiplicities in local algebra]{Multiplicities and Betti numbers \\ in local algebra via lim Ulrich points}

\author[Iyengar]{Srikanth B. Iyengar}
\address{Department of Mathematics, University of Utah, Salt Lake City, UT 84112, USA}
\email{iyengar@math.utah.edu}

\author[Ma]{Linquan Ma}
\address{Department of Mathematics, Purdue University, 150 N. University street, IN 47907}
\email{ma326@purdue.edu}

\author[Walker]{Mark E. Walker}
\address{Department of Mathematics, University of Nebraska, Lincoln, NE 68588, U.S.A.}
\email{mark.walker@unl.edu}

\begin{abstract}
This work concerns  finite free complexes with finite length homology over a  commutative noetherian local ring $R$. The focus is on  complexes that have length $\mathrm{dim}\, R$, which is the smallest possible value, and in particular on free resolutions of modules of finite length and finite projective dimension.   Lower bounds are obtained on the Euler characteristic of such short complexes when $R$ is a strict complete intersection, and also on the Dutta multiplicity, when $R$ is the localization  at its maximal ideal of a standard graded algebra over a field of positive prime characteristic. The key idea in the proof is the construction of a suitable Ulrich module, or, in the latter case, a sequence of modules that have the Ulrich property asymptotically, and with good convergence properties in the rational Grothendieck group of $R$. Such a sequence is obtained by  constructing an appropriate sequence of sheaves on the associated projective variety.
\end{abstract}

\date{\today}

\keywords{complete intersection ring,  Dutta multiplicity, Euler characteristic, finite free complex, finite projective dimension, lim Ulrich sequence}
\subjclass[2020]{13D40 (primary); 13A35, 13C14, 13D15,  14F06  (secondary)}

\maketitle

\setcounter{tocdepth}{1}
\tableofcontents

\section{Introduction}
This paper investigates various questions about multiplicities of modules over (commutative noetherian) local rings, the most basic of which is:
\begin{quote}
  For a given local ring $R$, what is the smallest possible  length of a nonzero $R$-module having finite projective dimension?
\end{quote}

This question is only interesting for Cohen-Macaulay rings: If $R$ is not Cohen-Macaulay, then every nonzero module of finite projective dimension has infinite length; this is by Roberts' New Intersection Theorem~\cite{Roberts:1987}. On the other hand, when $R$ is Cohen-Macaulay  and $\bsx\colonequals x_1, \dots, x_d $ form a system of parameters for $R$, the $R$-module $R/(\bsx)$ has finite projective dimension and its length  satisfies
\[
\length_R(R/(\bsx)) \geq e(R)
\]
where $e(R)$ denotes the Hilbert-Samuel multiplicity of $R$. In fact, at least when the residue field of $R$ is infinite, equality holds when $\bsx$ is chosen to be sufficiently general.   This leads us to  conjecture:

\begin{icon}
\label{intro:conjA}
For a local ring $R$, every nonzero $R$-module $M$ of finite projective dimension satisfies $\length_R M \geq e(R)$.
\end{icon}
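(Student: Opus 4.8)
\emph{Reductions and set-up.} The plan is to reduce to the case that $R$ is Cohen--Macaulay and then to bound $\length_R M$ from below by pairing a finite free resolution of $M$ against an Ulrich module over $R$, or --- when no such module is known to exist --- against a sequence of modules that is asymptotically Ulrich and converges well in $\rgrog{R}$. If $\length_R M=\infty$ the asserted inequality is trivial, so assume $M$ has finite positive length. If $R$ is not Cohen--Macaulay, Roberts' New Intersection Theorem~\cite{Roberts:1987} shows that no nonzero module of finite projective dimension can have finite length, so we may assume $R$ is Cohen--Macaulay. Standard faithfully flat base changes (completion, and passage to $R[t]_{\mathfrak m R[t]}$) alter neither $\dim R$, nor $e(R)$, nor $\length_R M$, nor the hypotheses on $M$, so we may further assume that $R$ is complete with infinite residue field. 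Put $d=\dim R$. Since $M$ has finite length and finite projective dimension, Auslander--Buchsbaum gives $\pdim_R M=\depth R=d$, so $M$ has a finite free resolution $F\colon 0\to F_d\to\cdots\to F_0\to 0$ of length $d$.

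\emph{Pairing with an Ulrich module.} Suppose $R$ admits an Ulrich module, that is, a nonzero maximal Cohen--Macaulay module $N$ with $\mu_R(N)=e(N)$; assume for simplicity that $N$ has constant rank $r$. Because $\supp M=\{\mathfrak m\}$, the complex $F\otimes_R N$ has finite-length homology $\tor^R_i(M,N)$ in every degree, while each $F_i\otimes_R N$ is a direct sum of copies of $N$ and hence has depth $d\ge i$; the Peskine--Szpiro Acyclicity Lemma therefore forces $\tor^R_i(M,N)=0$ for $i>0$. Thus $F\otimes_R N$ is a resolution of $M\otimes_R N$, and Serre's intersection number satisfies $\chi(M,N)=\length_R(M\otimes_R N)$. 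Since $M\ne 0$ there is a surjection $M\twoheadrightarrow M/\mathfrak m M\twoheadrightarrow k$, hence a surjection $M\otimes_R N\twoheadrightarrow N/\mathfrak m N$, so that $\length_R(M\otimes_R N)\ge\mu_R(N)=e(N)=r\,e(R)$, the last equality by the associativity formula for Hilbert--Samuel multiplicities.

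\emph{The numerical identity and the conclusion.} The class $[N]-r[R]$ lies in the subgroup of $\grog{R}$ generated by classes of modules of dimension $<d$. When $R$ is a complete intersection, the Vanishing Theorem of Roberts and of Gillet and Soul\'e asserts that $\chi(M,-)$ --- which is additive in its second variable because $M$ has finite projective dimension --- annihilates that subgroup, since $\dim M+(d-1)<d$. Hence $\chi(M,N)=r\,\chi(M,R)=r\,\length_R M$. Combining this with the lower bound of the previous paragraph gives $r\,\length_R M\ge r\,e(R)$, and dividing by $r\ge 1$ yields $\length_R M\ge e(R)$.

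\emph{The main obstacle.} The argument just sketched relies on two inputs that are unavailable over an arbitrary local ring, and these are precisely what keeps the statement a conjecture. First, it is not known whether every Cohen--Macaulay local ring admits an Ulrich module; the remedy --- and the technical core of this paper --- is to replace $N$ by a lim Ulrich sequence $(N_j)$, namely maximal Cohen--Macaulay modules with $\mu_R(N_j)/e(N_j)\to 1$ whose classes converge appropriately in $\rgrog{R}$, and to pass to the limit in the inequality $\chi(M,N_j)\ge\mu_R(N_j)$. Such sequences are constructed below out of suitable coherent sheaves on the projective variety attached to $R$, but only when $R$ is a strict complete intersection or the localization at its homogeneous maximal ideal of a standard graded algebra over a field of positive prime characteristic; in the latter case $\chi$ must moreover be replaced by the Dutta multiplicity, the identity of the preceding paragraph being recovered through a Frobenius-limiting argument. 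Second, even granting an honest Ulrich module, the numerical identity $\chi(M,N)=r\,\length_R M$ requires the Vanishing Theorem, which holds for complete intersections but fails in general, by the example of Dutta, Hochster, and McLaughlin. This is why the theorems established here are conditional on $R$ being a strict complete intersection or a graded ring of positive characteristic.
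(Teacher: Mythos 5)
The statement you were asked to justify is Conjecture~\ref{intro:conjA}; it is open in general and the paper does not prove it, so there is no ``paper proof'' to match against. You correctly recognize this and frame your text as a reduction plus a proof sketch for the tractable cases, so the real question is how your sketch for those cases compares with what the paper actually does.

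Your preliminary reductions (Cohen--Macaulay via the New Intersection Theorem, completion and residue-field inflation, Auslander--Buchsbaum to get a length-$d$ free resolution) match the paper. Where you diverge is in how you convert a bound on $\chi(M,N)$ into a bound on $\length_R M$. You derive the inequality $\chi(M,N)=\length_R(M\otimes_R N)\ge \nu_R(N)=e(N)$ (a neat shortcut for the $i=0$ case), and then invoke the Vanishing Theorem of Roberts and Gillet--Soul\'e for complete intersections to get $\chi(M,N)=r\,\chi(M,R)=r\,\length_R M$. The paper never invokes the Vanishing Theorem. Its route is: Proposition~\ref{pr:ulrich-good} proves $\binom di \chi(F\otimes_R U)\ge \beta^R_i(F)\,e_d(U)$ directly via a Koszul-complex comparison on a parameter sequence $\boldsymbol r$ with $U/\boldsymbol rU\cong k^{e(U)}$; and Theorem~\ref{th:sci} then uses the Backelin--Herzog--Ulrich filtration of $R^s$ by Ulrich modules, together with additivity (Lemma~\ref{le:filtration}), to obtain the inequalities for $R$ itself. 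This yields the whole family of Betti-number inequalities in \eqref{eq:euler} rather than just the $i=0$ case, and, because it works with a filtration of $R^s$ rather than a single Ulrich module, it sidesteps your ``constant rank $r$'' assumption entirely.

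That assumption is in fact the gap in your sketch: what Backelin--Herzog--Ulrich provide for strict complete intersections is a filtration of some $R^s$ by Ulrich modules, not a single Ulrich module of well-defined integer rank, and over a non-reduced ring (already $k[[x]]/(x^2)$) the associativity formula makes ``rank'' fractional for Ulrich modules. So as written, your second and third paragraphs do not close the argument even for strict complete intersections without extra work on the rank issue; the paper's formulation in $\euler R$ (that $[R]$ is a positive combination of Ulrich classes, Theorem~\ref{th:UlrichspanR}, of which Theorem~\ref{intro:thmA} is the basic case) is the abstraction designed precisely to avoid this. Your closing paragraph does identify the right ingredients --- BHU for strict complete intersections, lim Ulrich sequences and Dutta multiplicity in positive characteristic --- so the overall picture you give of why the statement is a conjecture, and what special cases can be handled, is accurate.
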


One measure of its difficulty is that it  implies the still open Lech's conjecture~\cite{Lech:1960}, recalled in Section~\ref{se:lech}, at least  for Cohen-Macaulay rings;  see \cite[Chapter V]{Ma:2014}.

It is not hard to verify Conjecture~\ref{intro:conjA} when $\dim(R) \leq 1$; see Example \ref{ex:dim1}.  It also holds trivially when $R$ is regular, for then $e(R) = 1$. For an arbitrary local ring the conjecture holds when the module $M$ is extended from a module over a regular ring: if $A \to R$ is a finite flat map of local rings with $A$ regular, $N$ is a finitely generated $A$-module, and $M = R \otimes_A N$, then $\length_R M \geq e(R)$.

Nearly all cases in which we can establish Conjecture \ref{intro:conjA} for a local ring $R$, it is because $R$ admits an Ulrich module: a nonzero maximal Cohen-Macaulay  $R$-module $U$ whose Hilbert-Samuel multiplicity, $e(U)$, equals its minimal number of generators, $\nu_R(U)$; this is the smallest possible value for the multiplicity. For instance, in Theorem~\ref{th:UlrichspanR} we prove the following result. Here  $\euler R$ denotes the Grothendieck group of finitely generated $R$-modules modulo numerical equivalence, extended to $\bbR$; see Section~\ref{se:grog} for details. Moreover  $\beta^R_i(M)$ is the $i$th Betti number of $M$.

\begin{ithm}
\label{intro:thmA}
If $R$  admits an Ulrich module $U$ whose class $[U]$ in $\euler R$ is a multiple of $[R]$, then for each $R$-module $M$ of finite length and finite projective dimension 
 \[
  \binom{\dim R} i  \length_R M \geq \beta^R_i(M) e(R)
 \]
for all $i$. In particular,  Conjecture \ref{intro:conjA} holds for $R$.
\end{ithm}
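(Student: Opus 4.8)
The plan is to reduce the whole statement to a single computation with a Koszul complex, so I begin by isolating the relevant data. Fix a module $M$ of finite length and finite projective dimension (if none exists both assertions are vacuous, since over a non–Cohen--Macaulay ring every nonzero module of finite projective dimension has infinite length by the New Intersection Theorem). By that same theorem $\pdim_R M\ge\dim R$, whereas $\pdim_R M=\depth R-\depth M=\depth R\le\dim R$ by Auslander--Buchsbaum; hence $R$ is Cohen--Macaulay and $\pdim_R M=\dim R=:d$. Replacing $R$ by $R(t)=R[t]_{\fm R[t]}$, which leaves $e(R)$, $\length_R M$, $\beta^R_i(M)$, the Ulrich module $U$ and the relation $[U]=c[R]$ in $\euler R$ unchanged, I may assume the residue field $k$ is infinite and fix a minimal reduction $\bsx=x_1,\dots,x_d$ of $\fm$. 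Then $\bsx$ is an $R$-regular sequence with $\length_R(R/(\bsx))=e(R)$; it is a $U$-regular sequence because $U$ is maximal Cohen--Macaulay; and, in the one place where the Ulrich hypothesis is used, $\fm U=(\bsx)U$, so $U/(\bsx)U\cong k^{e(U)}$. Pairing $[U]=c[R]$ against the perfect complex $\kos{\bsx}{R}$ (a free resolution of $R/(\bsx)$) gives $e(U)=\length_R\bigl(U/(\bsx)U\bigr)=c\,\length_R(R/(\bsx))=c\,e(R)$, so $c=e(U)/e(R)$.

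The key homological input is that $\tor^R_i(M,U)=0$ for all $i>0$. Indeed, if $F$ denotes the minimal free resolution of $M$ (of length $d$), then every term of $F\otimes_R U$ is a copy of $U$ and so has depth $d\ge i$, while the homology of $F\otimes_R U$ in positive degrees is supported at $\fm$, hence of finite length, hence of depth $0$; the Peskine--Szpiro acyclicity lemma then forces $F\otimes_R U$ to be acyclic in positive degrees. Consequently $M\lotimes{R}U\simeq M\otimes_R U$ in the derived category, and since $N\mapsto\chi(M\lotimes{R}N)$ factors through $\euler R$ (as $M$ is a perfect complex with finite-length homology), the relation $[U]=c[R]$ yields
\[
  \length_R(M\otimes_R U)=\chi\bigl(M\lotimes{R}U\bigr)=c\cdot\chi\bigl(M\lotimes{R}R\bigr)=c\cdot\length_R M=\frac{e(U)}{e(R)}\,\length_R M .
\]

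Now put $N\colonequals M\otimes_R U$ and study the Koszul complex $\kos{\bsx}{N}$. Since $\kos{\bsx}{R}$ resolves $R/(\bsx)$ and $\tor^R_{>0}(M,U)=0$, there are quasi-isomorphisms
\[
  \kos{\bsx}{N}\ \simeq\ R/(\bsx)\lotimes{R}N\ \simeq\ M\lotimes{R}\bigl(R/(\bsx)\lotimes{R}U\bigr)\ \simeq\ M\lotimes{R}\bigl(U/(\bsx)U\bigr)\ \simeq\ M\lotimes{R}k^{e(U)},
\]
where the third uses that $\bsx$ is $U$-regular (so $R/(\bsx)\lotimes{R}U$ is concentrated in degree $0$) and the last uses $U/(\bsx)U\cong k^{e(U)}$. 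Taking homology, $\HH{p}{\kos{\bsx}{N}}\cong\bigl(\tor_p^R(M,k)\bigr)^{e(U)}\cong k^{\,\beta^R_p(M)\,e(U)}$ for every $p$. On the other hand $\HH{p}{\kos{\bsx}{N}}$ is a subquotient of the term $\bigwedge^p(R^d)\otimes_R N\cong N^{\binom dp}$ of the Koszul complex, so
\[
  \beta^R_p(M)\,e(U)=\length_R\HH{p}{\kos{\bsx}{N}}\ \le\ \binom dp\length_R N=\binom dp\,\frac{e(U)}{e(R)}\,\length_R M .
\]
Dividing by $e(U)>0$ yields $\binom dp\length_R M\ge\beta^R_p(M)\,e(R)$ for all $p$; for $p=0$ this reads $\length_R M\ge\nu_R(M)\,e(R)\ge e(R)$, which is Conjecture~\ref{intro:conjA}.

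I expect the only substantial step to be the Tor-vanishing $\tor^R_{>0}(M,U)=0$: this is precisely where the two hypotheses ``$U$ is maximal Cohen--Macaulay'' and ``$\pdim_R M$ equals $\dim R$'' (not merely that it is finite) are used together --- if the resolution of $M$ were longer than $\depth U$, the acyclicity lemma would not apply. The reductions and the identification $c=e(U)/e(R)$ are routine; everything after the Tor-vanishing is bookkeeping with Koszul complexes, with the Ulrich property entering only through $U/(\bsx)U\cong k^{e(U)}$ and the hypothesis $[U]=c[R]$ entering only to evaluate $\length_R(M\otimes_R U)$.
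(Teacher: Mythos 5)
Your argument is correct and, up to reorganization, is the paper's own proof: the core of Proposition~\ref{pr:ulrich-good} is precisely your Koszul computation, comparing $\HH p{\kos{\bsx}{M\otimes_RU}}$ in two ways --- via $U/\bsx U\cong k^{e(U)}$ on one side (giving $\beta_p^R(M)e(U)$) and via the MCM property $\tor^R_{>0}(M,U)=0$ plus the crude subquotient bound $\length\HH p \le \binom dp\length_R(M\otimes_RU)$ on the other --- and Theorem~\ref{th:UlrichspanR} closes the loop with the relation $[U]=c[R]$ in $\euler R$. The only cosmetic difference is the order: you evaluate $\length_R(M\otimes_RU)=\tfrac{e(U)}{e(R)}\length_RM$ up front via the pairing, whereas the paper first proves the inequality $\binom di \chi_R(F\otimes_RU)\ge\beta_i^R(F)e(U)$ and then applies the Grothendieck-group relation; both rely on the same ingredients (depth sensitivity or the acyclicity lemma for the Tor-vanishing, the Ulrich isomorphism, and $\langle F,-\rangle$ factoring through $\euler R$).
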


A local ring $R$ is a \emph{strict complete intersection} if its associated graded ring $\mathrm{gr}_\fm(R)$ is a complete intersection. Every strict complete intersection is a complete intersection;  the converse holds for hypersurfaces, but fails if the codimension is at least two.  It follows from a result of Backelin, Herzog, and Ulrich \cite{Backelin/Herzog/Ulrich:1991} that the hypothesis of Theorem \ref{intro:thmA} holds whenever $R$ is a strict complete intersection. Thus Conjecture~\ref{intro:conjA} holds for all such rings and, in particular, for all hypersurfaces.

Previously, Avramov, Buchwietz, Iyengar and Miller \cite[Section 1]{Avramov/Buchweitz/Iyengar/Miller:2010a} established lower bounds on the Loewy length of modules of finite projective dimension.  One of their results is that if $R$ is a strict complete intersection of codimension $c$, then
\[
\ell\ell_R M \geq e(R) -c + 1\,.
\]
In particular, $\ell_R M \geq e(R) - c + 1$, which also verifies Conjecture \ref{intro:conjA} for hypersufaces.

It is an open question whether  every Cohen-Macaulay ring admits a nonzero Ulrich module. This appears to be a rather difficult problem; in any case, known examples are rare. However,  for many arguments involving Ulrich modules what is needed is not an actual Ulrich module, but rather just a class in the real vector space $\euler R$  with suitable properties.  In detail, we say that a nonzero class $\alpha$ in $\euler R$ is  a  \emph{lim Ulrich point} if it is a positive linear combination of terms of the form
\[
\lim_{n \to \infty} \frac{[U_n]}{\nu_R(U_n)}
\]
where $(U_n)_{n\geqslant 0}$ is a \emph{lim Ulrich sequence} of $R$-modules; that the limit above exists is an additional constraint. We refer the reader to \ref{ch:lus}  for the precise definition of lim Ulrich sequences, but, heuristically, it means that the  modules in the list have the Ulrich property asymptotically.

For instance, the conclusion of Theorem \ref{intro:thmA} remains valid if, instead of requiring that $[R] = \frac{1}{r} [U]$ in $\euler R$ for some integer $r$ and Ulrich module $U$, we have merely that $[R]$ is a lim Ulrich point in $\euler R$.

Lim Ulrich sequences appear to be more common than Ulrich modules. In particular, the second author has shown that every standard graded algebra over a perfect field of positive characteristic admits a lim Ulrich sequence of modules. We revisit the construction of such sequence in Sections \ref{se:lim-Ulrich-sheaves} and \ref{se:Ma}, approaching them from a more geometric, and hence simpler, point of view.

In particular, we introduce the notion of a ``lim Ulrich sequence of sheaves'' on a projective scheme over a field. We prove a lim Ulrich sequence of sheaves  gives rise  to a lim Ulrich sequence of graded modules over the associated homogeneous coordinate ring; see Theorem \ref{th:usheaf-umodule}. Moreover, in Theorem \ref{th:ma-revisited} we establish that every projective scheme over an infinite, perfect field of positive characteristic $p$ admits a lim Ulrich sequence of sheaves. These two results recover the second author's construction of lim Ulrich sequence of modules over such rings.   We hope that our results on lim Ulrich sequence of sheaves  prove to be of wider interest; for instance, in a forthcoming paper we intend to use them to study the cone of cohomology tables of coherent sheaves on projective schemes.

To summarize, if $R$ is the localization of a standard graded $k$-algebra $A$ at its homogeneous maximal ideal, where $k$ is a perfect field of positive characteristic, then $R$ admits a lim Ulrich sequence $(U_n)_{n\geqslant 0}$. Moreover, we are able to analyze the associated sequence of points in the Grothendieck group $\euler R$. Alas, this sequence fails to establish that $[R]$ is a lim Ulrich point. It is therefore  not possible to deduce Conjecture \ref{intro:conjA} using this sequence of modules.

Rather, we prove that $[R]_d \in \euler R$ is a lim Ulrich point,  where $[R]_d$ is the $d$-th component of $[R]$ with respect to the weight decomposition of $\euler R$ induced by the action of  Frobenius. This leads to the result below where $\chi_\infty(M)$ denotes the Dutta multiplicity of $M$.

\begin{ithm}
\label{intro:thmB}
If $R$ is the localization at its homogeneous maximal ideal of a standard graded algebra over a field of positive characteristic,  and $M$ is a finitely generated $R$-module $M$  of finite length and finite projective dimension, then
 \[
  \binom{\dim R} i  \chi_\infty(M) \geq \beta^R_i(M)  e(R)
\]
for all $i$.  In particular, $\chi_\infty(M) \geq e(R)$.
\end{ithm}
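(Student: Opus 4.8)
The plan is to run the argument proving Theorem~\ref{intro:thmA}, with the Dutta multiplicity in the role of the length. First I would make a few reductions and recall the relevant description of $\chi_\infty$. Since $M$ is a nonzero module of finite length and finite projective dimension, Roberts' New Intersection Theorem forces $R$ to be Cohen--Macaulay; put $d \colonequals \dim R$. As $e(R)$, the Betti numbers $\beta^R_i(M)$, and $\chi_\infty(M)$ are all preserved by a faithfully flat local extension whose closed fibre is a field, base changing the graded algebra along $k \to \overline{k}$ lets me assume $k$ is infinite and perfect. Finally, recall that if $[R] = \sum_{j=0}^{d}[R]_j$ is the weight decomposition of $\euler R$ under (suitably normalised) Frobenius push-forward, and $F$ denotes the minimal free resolution of $M$ --- a perfect complex with finite length homology --- then
\[
 \chi_\infty(M) \;=\; \pair{[F]}{[R]_d}, \qquad \text{where } \pair{[C]}{[N]} \colonequals \textstyle\sum_i (-1)^i \length_R \HH{i}{C \lotimes{R} N}
\]
is the Euler pairing $\knot R \times \euler R \to \bbR$.

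The key input, supplied by the body via Theorems~\ref{th:usheaf-umodule} and~\ref{th:ma-revisited}, is that $[R]_d$ is a lim Ulrich point: $[R]_d = \sum_t c_t\,\alpha_t$ with $c_t \in \bbR_{>0}$ and $\alpha_t = \lim_n [U^{(t)}_n]/\nu_R(U^{(t)}_n)$ for lim Ulrich sequences $(U^{(t)}_n)_n$. Fix a general system of parameters $\bsx = x_1,\dots,x_d$; since $k$ is infinite, $(\bsx)$ is a minimal reduction of $\fm$ and $\bsx$ is a regular sequence, so $A \colonequals k[\![\bsx]\!] \to R$ is a Noether normalisation and $R$ is $A$-free. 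Pairing $\kos{\bsx}{R}$ with the decomposition of $[R]_d$: a standard computation with Frobenius gives $\pair{[\kos{\bsx}{R}]}{[R]_j} = 0$ for $j < d$ and $\pair{[\kos{\bsx}{R}]}{[R]_d} = e(\bsx;R) = e(R)$, while $\pair{[\kos{\bsx}{R}]}{\alpha_t} = \lim_n \chi(\kos{\bsx}{U^{(t)}_n})/\nu_R(U^{(t)}_n) = 1$ because the $U^{(t)}_n$ are asymptotically Cohen--Macaulay of $A$-rank $\nu_R(U^{(t)}_n)(1+o(1))$ by the (asymptotic) Ulrich property. Hence $\sum_t c_t = e(R)$. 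By bilinearity and continuity of $\pair{-}{-}$ on the finite-dimensional space $\euler R$,
\[
 \chi_\infty(M) \;=\; \sum_t c_t \lim_n \frac{\chi^R(M,U^{(t)}_n)}{\nu_R(U^{(t)}_n)}, \qquad \chi^R(M,U) \colonequals \textstyle\sum_i (-1)^i \length_R \tor^R_i(M,U) = \pair{[F]}{[U]}.
\]

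The heart of the matter is the bound $\lim_n \chi^R(M,U_n)/\nu_R(U_n) \geq \beta^R_i(M)/\binom{d}{i}$ for each $i$ and each lim Ulrich sequence $(U_n)_n$. I would prove it first for a genuine Ulrich module $U$. Since $U$ is maximal Cohen--Macaulay and $M$ has finite length and finite projective dimension over the Cohen--Macaulay ring $R$, one gets $\tor^R_i(M,U) = 0$ for $i > 0$: dualising $F$ shows $\hom_R(F,R)$ resolves the finite length module $\operatorname{Ext}^d_R(M,R)$, and $\tor^R_i(M,U) \cong \operatorname{Ext}^{d-i}_R(\operatorname{Ext}^d_R(M,R),U)$, which vanishes for $i > 0$ because $\depth_R U = d$. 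Hence $F \otimes_R U$ is an $A$-free resolution of the finite length $A$-module $M \otimes_R U$; and since $\fm U = (\bsx)U$, the ideal $\fm$ acts on $U \cong A^{\nu_R(U)}$ through matrices over $(\bsx) = \fm A$, so this resolution is $A$-minimal and $\beta^A_i(M \otimes_R U) = \nu_R(U)\,\beta^R_i(M)$ for all $i$. Now invoke the elementary fact that a finite length module $N$ over a regular local ring $A$ of dimension $d$ satisfies $\beta^A_i(N) \leq \binom{d}{i}\length_A(N)$ --- indeed $\tor^A_i(k,N) \cong \HH{i}{\kos{\bsx}{N}}$ is a subquotient of $N^{\binom{d}{i}}$ killed by $\bsx$, hence a $k$-vector space, of length $\beta^A_i(N) \leq \binom{d}{i}\length_A(N)$. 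Combining, and using $\length_A(M\otimes_R U) = \length_R(M\otimes_R U) = \chi^R(M,U)$, gives $\chi^R(M,U) \geq \nu_R(U)\beta^R_i(M)/\binom{d}{i}$. For a lim Ulrich sequence the same computation goes through up to errors of order $o(\nu_R(U_n))$ controlled by $\sum_{j<d}\length_R \lch{j}{\fm}{U_n}$ --- this bounds $\sum_{i>0}\length_R\tor^R_i(M,U_n)$ and the gap between $\beta^A_i(M\otimes_R U_n)$ and $\rank_A(U_n)\beta^R_i(M)$ --- while $\rank_A(U_n)/\nu_R(U_n)\to 1$ by the Ulrich property; dividing by $\nu_R(U_n)$ and letting $n\to\infty$ gives the bound.

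Assembling the pieces, for every $i$,
\[
 \binom{d}{i}\chi_\infty(M) \;=\; \binom{d}{i}\sum_t c_t \lim_n \frac{\chi^R(M,U^{(t)}_n)}{\nu_R(U^{(t)}_n)} \;\geq\; \sum_t c_t\,\beta^R_i(M) \;=\; e(R)\,\beta^R_i(M),
\]
which is the asserted inequality; taking $i = 0$ with $\beta^R_0(M) \geq 1$ yields $\chi_\infty(M) \geq e(R)$. I expect the main obstacle to be the limiting step inside the heart of the argument: one must extract from the definition of a lim Ulrich sequence (\ref{ch:lus}) that the higher Tor modules, and the failure of $F \otimes_R U_n$ to be $A$-minimal --- equivalently, the failure of $U_n$ to be $A$-free and of $\fm U_n$ to equal $(\bsx)U_n$ --- contribute only $o(\nu_R(U_n))$, so that the exact identities available for a genuine Ulrich module survive passage to the limit. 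Everything else is either formal or, through the Noether normalisation $A$, reduces to the elementary inequality $\beta^A_i(N) \leq \binom{d}{i}\length_A(N)$ over a regular local ring.
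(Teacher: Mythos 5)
Your overall strategy mirrors the paper's closely: reduce via the identity $\chi_\infty(M)=\pair{[F]}{[R]_d}$, invoke the fact that $[R]_d$ is a lim Ulrich point, and bound $\binom{d}{i}\pair{F}{\alpha}$ from below by $\beta^R_i(M)\,e_d(\alpha)$ for a lim Ulrich point $\alpha$. The last of these is precisely Theorem~\ref{th:lus}, which the paper combines with Theorem~\ref{th:ma-convergence} in Corollary~\ref{co:chi-infinity}. Two points deserve comment.

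First, for the genuine Ulrich case your argument differs cosmetically but not substantially from the paper's Proposition~\ref{pr:ulrich-good}. You pass through a Noether normalization $A=k[\![\bsx]\!]$, observe $U\cong A^{\nu_R(U)}$, that $F\otimes_R U$ is an $A$-minimal free resolution of $M\otimes_R U$, and finish with $\beta^A_i(N)\le\binom{d}{i}\ell_A(N)$. The paper instead works directly with $\kos{\bsx}{-}$, which is the same inequality proved by the same Koszul computation. Your phrasing is cleaner in the strict Ulrich case; the paper's is chosen to survive the limiting process.

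Second, and this is the genuine gap, the limiting step is not as you sketch it. You claim the errors are ``controlled by $\sum_{j<d}\length_R\lch{j}{\fm}{U_n}$.'' But the lim Cohen--Macaulay and lim Ulrich conditions (\ref{ch:lcm}, \ref{ch:lus}) are defined in terms of Koszul homology of short complexes, and nothing in those definitions forces the local cohomology of $U_n$ to be $o(\nu_R(U_n))$ --- Lemma~\ref{le:lch-lcm} is a sufficient condition for lim CM, not a characterization. For the specific sequence constructed from lim Ulrich sheaves this local cohomology control does hold (it is exactly what the proof of Theorem~\ref{th:usheaf-umodule} establishes), but Theorem~\ref{th:lus} is proved for an arbitrary lim Ulrich point, and the paper's argument avoids local cohomology entirely: it uses Lemma~\ref{le:lim-zero} twice on the double complex $K\otimes_R F\otimes_R U_n$, once reading $W(n)=F\otimes_R U_n$ and once $W(n)=K\otimes_R U_n$, together with the short exact sequence $0\to \fm U_n/\bsr U_n\to U_n/\bsr U_n\to U_n/\fm U_n\to 0$ whose left term has length $o(\nu_R(U_n))$ by~\eqref{eq:lus}. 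Separately, your Noether-normalization route has its own problem in the limit: $U_n$ need not be $A$-free, so $F\otimes_R U_n$ is no longer a complex of free $A$-modules, and the comparison of $\beta^A_i(M\otimes_R U_n)$ with $\nu_R(U_n)\,\beta^R_i(M)$ requires precisely the Koszul-homology bookkeeping that the paper's Lemma~\ref{le:lim-zero} supplies. In short, the identities you establish for honest Ulrich modules do survive the limit, but verifying that they do is the content of Theorem~\ref{th:lus}, and it must be run through the lim CM condition itself rather than through local cohomology. Finally, you should cite Theorem~\ref{th:ma-convergence} rather than Theorems~\ref{th:usheaf-umodule} and~\ref{th:ma-revisited} for the fact that $[R]_d$ is a lim Ulrich point: the latter two produce a lim Ulrich sequence, but identifying its limit in $\euler R$ as $[R]_d/e(R)$ needs the additional $K$-theoretic analysis (Lemma~\ref{le:gamma}, Lemma~\ref{le:frobenius-limit}) carried out in the proof of Theorem~\ref{th:ma-convergence}.
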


One has $\chi_\infty(M)=\length_R(M)$ when $R$ is a complete intersection, and more generally when $R$ is  numerically Roberts, and also when $M$ is the localization of a finitely generated graded module. This connects the theorem above to Theorem~\ref{intro:thmA}. 

 Theorem~\ref{intro:thmB} is in turn subsumed in Theorem~\ref{intro:thmC} below that applies also to non Cohen-Macaulay rings. As mentioned earlier, when $R$ is not Cohen-Macaulay every nonzero module of finite projective dimension has infinite length, and so our investigations for such rings focus instead on ``short complexes supported on the maximal ideal''. This term refers to a non-exact complex of finite  free $R$-modules of the form
\[
F \colonequals  0 \lra F_{\dim(R)} \lra \cdots \lra F_1 \lra F_0 \lra 0\,,
\]
such that $H_i(F)$ has finite length for all $i$.  The adjective  ``short'' comes from the fact that the length of any finite free complex with nonzero finite length homology is at least $\dim R$; this is the New Intersection Theorem. When $R$ is Cohen-Macaulay, such a short complex is necessarily the resolution of the module $H_0(F)$, and thus we are back in our original context.

For any finite free complex $F$ with finite length homology,  let $\chi(F)$ denote its Euler characteristic.  When $R$ has positive characteristic $p$ and $\varphi\colon R\to R$ is the Frobenius endomorphism, the \emph{Dutta multiplicity} of $F$ is
\[
\chi_\infty(F) \colonequals \lim_{n \to \infty}  \frac{\chi((\vf^*)^n(F))}{p^{n \dim(R)}}\,.
\]
Theorem \ref{intro:thmB} extends to non Cohen-Macaulay rings as follows; see Corollary \ref{co:chi-infinity}.

\begin{ithm}
\label{intro:thmC}
If $R$ is the localization at its homogeneous maximal ideal of a standard graded algebra over a field of positive characteristic, and $F$ is a short complex supported on the maximal ideal, then
\[
  \binom{\dim R} i  \chi_\infty(F) \geq \beta_i(F)  e(R)
 \]
 for all $i$.  In particular, $\chi_\infty(F) \geq e(R)$.
\end{ithm}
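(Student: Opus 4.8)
The plan is to reduce Theorem~\ref{intro:thmC} to the case already handled by the Ulrich-point machinery (the analogue of Theorem~\ref{intro:thmA} for lim Ulrich points), using the fact — asserted in the discussion preceding the statement — that $[R]_d$, the top graded piece of $[R]$ in the Frobenius weight decomposition of $\euler R$, is a lim Ulrich point. First I would recall the pairing between $\euler R$ and classes of finite free complexes with finite length homology: for such a complex $F$, the Euler characteristic $\chi(F)$, and more generally the ``partial Euler characteristics'' that see the individual Betti numbers $\beta_i(F)$, are computed by pairing $[F]$ against a class in $\euler R$, and this pairing is compatible with the Frobenius action. Concretely, $\chi((\vf^*)^n F)$ is obtained by pairing a fixed class coming from $F$ against $(\vf^*)^n[R]$ (or dually), so that dividing by $p^{n\dim R}$ and passing to the limit extracts exactly the top-weight component: $\chi_\infty(F) = \pair{[F]}{[R]_d}$ up to the appropriate normalization. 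This is the conceptual heart of why the Dutta multiplicity is the ``right'' invariant to feed into an argument about lim Ulrich points rather than honest Ulrich modules.

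Next, granting that $[R]_d$ is a lim Ulrich point, I would run the argument of Theorem~\ref{intro:thmA} with $[R]$ replaced by $[R]_d$ and with the length replaced by the Dutta multiplicity throughout. Recall the shape of that argument: if $U$ is an Ulrich module with $[U] = r[R]$ in $\euler R$, one pairs the complex $F$ (or the module $M$) against $[U]$; positivity of the pairing of a nonnegative class against an Ulrich/lim Ulrich point, together with $\nu_R(U_n) = e(U_n)$ asymptotically, converts the inequality $\chi(F) \geq 0$-type statement into the multiplicity bound, and tracking the individual homological degrees via the minimal free resolution of $U_n$ (whose ranks are controlled because $U_n$ is maximal Cohen-Macaulay with asymptotically minimal multiplicity) produces the binomial coefficients $\binom{\dim R}{i}$ and the Betti numbers $\beta_i(F)$. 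The only genuine modifications are: (i) one works with $[R]_d$ in place of $[R]$, which is legitimate because the pairing $\pair{[F]}{-}$ only sees the top weight component anyway, by the first paragraph; and (ii) one uses the lim Ulrich sequence $(U_n)$ of $R$-modules supplied by Theorems~\ref{th:usheaf-umodule} and~\ref{th:ma-revisited} (via the standard graded structure on $A$, $k$ infinite and perfect — after the usual harmless base change to enlarge $k$), rather than a single Ulrich module, so all identities become limits and the convergence hypothesis in the definition of lim Ulrich point is exactly what guarantees the limits exist and behave well.

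The main obstacle I anticipate is the identification in the first paragraph: precisely formulating how the Euler characteristic $\chi((\vf^*)^n F)$ is computed by a pairing in $\euler R$ that is Frobenius-equivariant, and checking that normalizing by $p^{n\dim R}$ really does project onto the $d$-th weight space $[R]_d$. This requires knowing that Frobenius acts on $\euler R$ semisimply with eigenvalues among $1, p, \dots, p^d$ (so that the weight decomposition exists), that $[R]_d \neq 0$, and that the pairing with $[F]$ kills all lower weight components in the limit — in other words that $F$ contributes only to the top weight. For $M$ of finite length and finite projective dimension this should follow from the fact that $[M]$ lies in (a twist of) the bottom of the filtration, dual to $[R]_d$ being on top; for a general short complex $F$ supported on $\fm$ one argues the same way with $[F] = \sum (-1)^i [H_i(F)]$. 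Once this pairing description and the nonvanishing of $[R]_d$ as a lim Ulrich point are in hand, the rest is a formal rerun of the proof of Theorem~\ref{intro:thmA}, and the concluding inequality $\chi_\infty(F) \geq e(R)$ is the $i = 0$ case, where $\binom{\dim R}{0} = 1$ and $\beta_0(F) \geq 1$ since $F$ is non-exact.
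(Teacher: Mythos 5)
Your proposal follows the paper's route. The paper establishes (as you describe) that $[R]_d$ is a lim Ulrich point via Theorem~\ref{th:ma-convergence}, identifies $\chi_\infty(F) = \pair{F}{[R]_d}$ via the Frobenius eigenspace decomposition of $\vf_*$ on $\euler R$ (equation~\eqref{eq:duttam}), and then invokes the lim Ulrich version of the inequality (Theorem~\ref{th:lus} via Corollary~\ref{co:lus}), after a base change to make $k$ algebraically closed.

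One small correction to your ``main obstacle'' paragraph: the identity $\chi_\infty(F)=\pair{F}{[R]_d}$ is a formal consequence of the adjunction $\pair{(\vf^*)^n F}{[R]}=\pair{F}{\vf_*^n[R]}$ and the decomposition $[R]=\sum_i [R]_i$ with $\vf_*[R]_i=p^i[R]_i$; dividing by $p^{nd}$ and passing to the limit kills the terms with $i<d$ automatically, regardless of any special position of $[F]$ or $[M]$ in a filtration. You do not need $[M]$ to lie ``at the bottom'' of anything --- the formula is valid for every $F\in\perf R$. Also, the binomial coefficients $\binom{d}{i}$ in Theorem~\ref{th:lus} come from the ranks of the Koszul complex on a minimal reduction of $\fm$, not from the free resolution of the $U_n$; but this is internal to the proof of Theorem~\ref{th:lus}, which you correctly invoke as a black box.
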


Emboldened by  this theorem, we pose:

\begin{icon}
\label{intro:conjB}
 For a local ring $R$, every short complex $F$ supported on the maximal ideal satisfies
\[
  \chi_\infty(F) \geq e(R)\,.
 \]
  \end{icon}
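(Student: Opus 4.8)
I would aim for the sharper inequality $\binom{\dim R}{i}\chi_\infty(F) \geq \beta_i(F)\,e(R)$ for all $i$ --- of which Conjecture~\ref{intro:conjB} is the case $i=0$ --- since this finer statement is the one that survives the reductions below, and try to bootstrap from Theorem~\ref{intro:thmC} by a chain of reductions that forces first positive characteristic and then a standard graded structure. The three invariants in play, $e(R)$, the Betti numbers $\beta_i(F)$, and $\chi_\infty(F)$, are all insensitive to completing $R$ and to enlarging the residue field to be perfect and infinite via a gonflement --- a $\Gamma$-construction in the positive characteristic case --- by faithful flatness; so one may assume at the outset that $R$ is complete with perfect infinite residue field.

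In equicharacteristic zero, one would then spread $R$ and $F$ out over a finitely generated $\bbZ$-subalgebra and specialize to closed points of large characteristic. On a dense open of the base $e$ and the $\beta_i$ are constant, and the local Chern character computing $\chi_\infty$ is likewise constant along the family; it therefore suffices to treat the positive characteristic fibres, which after completion are again complete local rings with perfect residue field --- but not yet of graded type. Mixed characteristic admits no such reduction, and I would set it aside for a first pass, expecting it to require perfectoid techniques or the nascent theory of lim Ulrich sequences over rings of mixed characteristic.

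The core is thus the equicharacteristic-$p$ case, and here the plan is to degenerate $R$ to its associated graded cone $G\colonequals\gr_\fm R$ through the extended Rees algebra: a family flat over the affine line whose generic fibre is $R$ and whose special fibre is the standard graded $R/\fm$-algebra $G$, for which $e(G)=e(R)$ exactly. Choosing a suitably generic specialisation $F_0$ of $F$ over $G$ --- for instance an associated graded complex of $F$, after a generic change of the data so that it again has finite-length homology --- one has $\beta_i(F_0)\geq\beta_i(F)$ by upper-semicontinuity of Betti numbers in the flat family, so Theorem~\ref{intro:thmC} applied to $G$ gives $\binom{\dim R}{i}\chi_\infty(F_0)\geq\beta_i(F_0)\,e(G)\geq\beta_i(F)\,e(R)$. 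What then remains is the inequality $\chi_\infty(F)\geq\chi_\infty(F_0)$, i.e.\ that the Dutta multiplicity does not drop under degeneration to the normal cone. A more conceptual alternative, bypassing the graded reduction, would be to show directly that for every complete local ring $R$ of dimension $d$ the top Frobenius-weight component $[R]_d$ of $[R]$ in $\euler R$ is a lim Ulrich point --- constructing a lim Ulrich sequence over $R$ itself rather than over a homogeneous coordinate ring --- and then to run the same pairing argument that proves Theorem~\ref{intro:thmC}.

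The principal obstacle, shared by both routes, is the control of $\chi_\infty$. Unlike $e(R)$ and the Betti numbers, whose behaviour in flat families is governed by classical semicontinuity theorems, the Dutta multiplicity is a subtle numerical invariant --- the degree-zero part of a local Chern character --- and the semicontinuity property needed here, under deformation to the normal cone and in the direction we want, is not recorded in the literature and may fail without further hypotheses; establishing it, or engineering around it by a careful choice of the degenerating family, is where I expect the real difficulty to lie. Mixed characteristic runs a close second: because the non-Cohen-Macaulay cases of the conjecture cannot be approached through length inequalities at all, a complete proof there seems to require genuinely new input, plausibly from big Cohen-Macaulay algebras or perfectoid geometry.
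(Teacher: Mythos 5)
Statement~\ref{intro:conjB} is a \emph{conjecture}, not a theorem: the paper does not prove it, and Proposition~\ref{pr:lech} shows it implies Lech's conjecture~\ref{ch:lech}, which remains open, so no complete proof is available. The paper establishes it only when $R$ is the localization of a standard graded algebra over a field of positive characteristic (Corollary~\ref{co:chi-infinity}), and separately for the specific rings of \ref{ch:Yhee} (Proposition~\ref{pr:Yhee-ring-ok}). Your outline correctly reproduces the reductions to the complete case with perfect infinite residue field (cf.\ \ref{ch:gonflament}, \ref{ch:Dutta}), correctly identifies Theorem~\ref{intro:thmC} as the proved core, and correctly flags that the remaining step is genuinely difficult; so as an assessment of the state of the problem your proposal is sound.

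That said, the degeneration route has two real gaps beyond the one you flag. First, the ``associated graded'' of a short minimal complex $F$ — the complex of linear strands of the differentials — will generally fail to have finite-length homology over $\gr_\fm R$, and a ``generic change of the data'' is not a routine upper-semicontinuity maneuver: one would need a flat family of short complexes over the Rees degeneration whose special fibre is again short with finite-length homology, and it is not clear such a family exists. Second, as you note, no semicontinuity of $\chi_\infty$ along deformation to the normal cone is known; but more than that, such an inequality is itself a Lech-type statement for Dutta multiplicity in a flat family, so this step may be no easier than the conjecture itself — the reduction risks circularity in strength. Your alternative route, showing $[R]_d$ is a lim Ulrich point for arbitrary complete local $R$, is exactly what the paper would need; but the paper's construction (Theorem~\ref{th:ma-revisited}) produces lim Ulrich sequences only from a projective scheme, hence only for graded-type rings, and Theorem~\ref{th:nolimU} shows that even weakly lim Ulrich sequences can fail to exist for complete local domains, so that route cannot work without substantial modification. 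Your proposal is a thoughtful program, aligned with the paper's framework, but it is not a proof, and the paper explicitly leaves Conjecture~\ref{intro:conjB} open.
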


We close this introduction with a few remarks. First, Conjecture \ref{intro:conjB} implies Lech's conjecture; see Proposition~\ref{pr:lech}.   Second, Roberts \cite{Roberts:1989} has constructed short complexes $F$ supported on the maximal ideal of a certain local ring such that $\chi(F) < 0$. Therefore Conjecture \ref{intro:conjB} cannot hold with the usual Euler characteristic $\chi$ in place of the Dutta multiplicity $\chi_\infty$.

Finally, Yhee \cite{Yhee:2021} has recently constructed two-dimensional, complete, local domains $R$ that admit no lim Ulrich sequences. The existence of such rings points to a limitation of the central technique used in this paper for addressing Conjectures \ref{intro:conjA} and \ref{intro:conjB}.
Nevertheless, both conjectures hold for Yhee's examples. These topics are also discussed in Section \ref{se:lech}.

\begin{ack}
Our thanks to a referee for comments and corrections on an earlier version of this manuscript, and to Yhee for sharing a preliminary version of \cite{Yhee:2021}. The authors were partly supported by National Science Foundation grants DMS-2001368 (SB), DMS-1901672 and FRG DMS-1952366 (LM), and DMS-1901848 (MW). LM was also partly supported by a fellowship from the Sloan Foundation.
\end{ack}

\section{Mulitiplicities and finite projective dimension}
In this section we recall basic definitions and results concerning multiplicities and finite free complexes. We take the book of Bruns and Herzog~\cite{Bruns/Herzog:1998} as our standard reference for this material. Throughout $(R, \fm, k)$ is a local ring with maximal ideal $\fm$ and residue field $k$.  Set $d\colonequals \dim R$.  Given a finitely generated $R$-module $M$, set
\[
\nu_R(M)\colonequals \rank_k(M/\fm M)\,;
\]
this is the size of a minimal generating set for $M$. The Krull dimension of $M$ is denoted $\dim_RM$.

\begin{chunk}
\label{ch:ed-symbol}
The key invariant in this work is the (modified) multiplicity $e_d(-)$, defined on the category of finitely generated $R$-modules by the formula:
\[
e_d(M) \colonequals d! \lim_{n \to \infty} \frac{\length_R(M/\fm^{n}M)}{n^{d}}\,,
\]
where $\length_R(-)$ denotes length. When there is no cause of confusion we write $e(R)$ instead of $e_d(R)$. It is a consequence of a theorem of Hilbert and Serre that $e_d(M)$ is a nonnegative integer. Moreover, $e_d(M)=0$ if and only if $\dim_R M< \dim R$.  An important property of the multiplicity  is that it is additive on short exact sequence; see \cite[Corollary~4.7.7]{Bruns/Herzog:1998} for details.

In a few places we need also the \emph{multiplicty} of $M$, defined by
\[
e(M) \colonequals (\dim_R M)! \lim_{n \to \infty} \frac{\length_R(M/\fm^{n}M)}{n^{\dim_RM}}\,.
\]
One has $e(M)=e_d(M)$ if and only if $\dim_RM=\dim R$.
\end{chunk}

\begin{chunk}
Given a sequence $\bsr\colonequals r_1,\dots,r_n$ we write $\kos{\bsr}M$ for the Koszul complex on $\bsr$ with coefficients in $M$, and
$\HH i{\bsr;M}$ for its homology module in degree $i$.  When $\bsr$ is a \emph{multiplicity system} for $M$, meaning that $\length_R(M/\bsr M)$ is finite, the lengths of all the Koszul homology modules are finite, and in this case one sets
\[
\chi(\bsr;M)\colonequals \sum_i  (-1)^i \length_R  \HH i{\bsr;M} \,.
\]
If $\bsr \colonequals r_1,\dots,r_d$ generate a minimal reduction of $\fm$, in the sense of \cite[Remark~4.6.9]{Bruns/Herzog:1998}, then
\begin{equation}
\label{eq:serre}
e_d(M) = \chi(\bsr;M)\,.
\end{equation}
This result follows from the work of Auslander and Buchsbaum, and Serre~\cite[Theorem~4.7.6 and Corollary 4.6.10]{Bruns/Herzog:1998}. Such minimal reductions exist when the residue field $k$ is infinite, so in this case one can compute the multiplicity as an Euler characteristic; see Corollary~\ref{co:ed-symbol} for a version that covers all local rings.

In the same vein when $k$ is infinite, there exist elements $\bsr$ in $\fm$ that form a system of parameters for $M$ and satisfy
\[
e(M) = \chi(\bsr; M) \,.
\]
When in addition $M$ is Cohen-Macaulay any system of parameters for $M$ is a regular sequence and hence for a sequence $\bsr$ as above one gets that
\[
e(M) = \length_R(M/\bsr M)\,.
\]
\end{chunk}

\begin{chunk}
\label{ch:codim}
Let $M$ be a finitely generated $R$-module. With $I\colonequals \ann_RM$,  the annihilator ideal of $M$, and $\grade_RM$ denoting  the longest regular sequence in  $I$, one has
\[
\depth R - \dim_R M \leq \grade_RM  \leq \height I
\leq \dim R-\dim_R M\leq \pdim_RM\,.
\]
For a proof, see \cite[(2.3) and (2.4)]{Avramov/Foxby:1998}. The rightmost inequality is useful only when $\pdim_RM$ is finite, and then it is the Intersection Theorem of Peskine and Szpiro~\cite[\S1]{Peskine/Szpiro:1973}, proved in full generality by Roberts as a consequence of his New Intersection Theorem; see \ref{ch:short}.  When $\pdim_RM$ is finite the equality of Auslander and Buchsbaum reads
\[
\pdim_RM = \depth R - \depth_RM\,.
\]
\end{chunk}
A finitely generated $R$-module $M$ is said to be  \emph{perfect} if
\[
\grade_RM=\pdim_RM\,.
\]
It is immediate from the inequalities in \ref{ch:codim} that when the ring $R$ is Cohen-Macaulay, an $R$-module $M$ is perfect if and only if it Cohen-Macaulay and of finite projective dimension. Conversely if $R$ has a nonzero module of finite length and finite projective dimension, then $R$ is Cohen-Macaulay. This is a consequence of the New Intersection Theorem, recalled below; see ~\ref{ch:short}.

\begin{chunk}
\label{ch:finite-free}
We write $\perf R$ for the category of finite free $R$-complexes, that is to say,  bounded complexes of finitely generated free $R$-modules, with finite length homology. The \emph{Euler characteristic} of such an $F$ is the integer
\[
\chi_R(F)  \colonequals  \sum_i(-1)^i \length_R\HH iF\,.
\]
For any integer $i$, the $i$th \emph{Betti number} of $F$ is
\[
\beta^R_i(F) \colonequals  \rank_k \tor^R_i(k,F)\,.
\]
When  $F$ is minimal, that is to say,  $d(F)\subseteq \fm F$, one has $\beta^R_i(F)=\rank_R F_i$. Set
\[
\beta^R(F)\colonequals \sum_i \beta^R_i(F)\,,
\]
this is the \emph{total Betti number} of $F$.
\end{chunk}

\begin{chunk}
\label{ch:short}
Consider $F$ in $\perf R$ of the form
\[
  0 \lra F_n \lra \cdots  \lra F_0 \lra 0
\]
and with $\HH 0F\ne 0$.  The New Intersection Theorem proved by Roberts~\cites{Roberts:1987, Roberts:1989} states that  $n \ge\dim R$. In what follows we say $F$ is a \emph{short} complex in $\perf R$ to indicate that it is a finite free complex as above  with $n=\dim R$.  When $R$ is Cohen-Macaulay this is tantamount to saying that $F$ is the  free resolution of a finite length $R$-module, namely, $\HH 0F$.

For any  complex $F$ in $\perf R$ as above and $R$-module $U$ one has
\[
\depth_RU =  \dim R - \sup\{i\mid \HH i{F\otimes_RU}\ne 0\}\,.
\]
This is a particular instance of the depth sensitivity of complexes in $\perf R$; see, for example, \cite[Corollary~6.4]{Iyengar:1999}.  In particular $\HH i{F\otimes_RU}=0$ for $i\ge 1$  when $U$ is maximal Cohen-Macaulay.
\end{chunk}

\begin{chunk}
\label{ch:gonflament}
Let $\vf\colon (R,\fm) \to (S,\fn)$ be a flat local map with $\fm  S=\fn$.  It is easy to check from definitions that $e(S) = e(R)$. Equally, for any $F\in \perf R$ it is easy to verify that the complex $S\otimes_RF$ is in ${\mathcal{F}^{\fn}(S)}$, and there is an equality
\[
\beta^S_i(S\otimes_RF) = \beta^R_i(F) \qquad\text{for each $i$.}
\]
Moreover $\chi(S\otimes_RF) = \chi(F)$.  Given any extension of fields $k\to l$, there exists a flat local extension $S$ of $R$ as above whose residue field is $l$; see, for example, \cite[Appendice~2]{BourbakiCAIX:2006}. This, and the discussion in the previous paragraph, often allow us to reduce the problem on hand to the case where the residue field of $R$ is infinite.
\end{chunk}

\section{Euler characteristics of short complexes}

Let $(R,\fm,k)$ be a local ring and set $d\colonequals \dim R$.  We shall be interested in those rings $R$ with the property that the Euler characteristic of any short complex  $F$ in $\perf R$, in the sense of \ref{ch:short}, satisfies inequalities:
\begin{equation}
\label{eq:euler}
\binom di \chi_R(F) \ge \beta^R_i(F)  e(R) \quad\text{for each $0\le i\le d$}\,.
\end{equation}
If $R$ is Cohen-Macaulay, $F$ has homology only in degree $0$ so $\chi_R(F)=\length_R(\HH 0F)$. Thus  Conjecture~\ref{intro:conjA}  is the  case $i=0$ of the inequalities above.

The inequalities in \eqref{eq:euler} do not always hold: Roberts~\cite[\S4]{Roberts:1989} has constructed  $F$ as above with negative Euler characteristic.  However when $R$ is Cohen-Macaulay,  the negativity of the Euler characteristic cannot arise as an obstruction, and we do not know if the inequalities above hold for this class of rings.  They are easy to verify when $R$ is regular. The main result of this section, Theorem~\ref{th:sci},  is that they hold also when $R$ is a strict complete intersection. A stronger result holds when $R$ is a standard graded ring and $F$ is a complex of graded $R$-modules. This will be dealt with in forthcoming work;  see also Corollary~\ref{co:chi-infinity} and \ref{ch:chi-graded}.

We begin with some general observations. For $i=0$ the inequality \eqref{eq:euler} reads
\[
\chi_R(F)\ge \beta^R_0(F)e(R)\,.
\]
Taking the sum over $i$ in \eqref{eq:euler} yields an inequality
\[
\chi_R(F) \ge \frac{\beta^R(F)}{2^d} e(R)\,.
\]
The third author~\cite{Walker:2017} proved that when $R$ is complete intersection with $\chr k\ne 2$, or when $R$ is Cohen-Macaulay and  contains a field of positive characteristic $p\ge 3$, then any $F\in \perf R$ with $\hh F\ne 0$ satisfies
\begin{equation}
\label{eq:walker}
\beta^R(F) \ge 2^{d} \frac{|\chi_R(F)|}{\sum_i \length_R \HH iF}\,.
\end{equation}

For now we record a simple example where these inequalities hold.

\begin{example}
\label{ex:dim1}
 If $\dim R \le 1$, then \eqref{eq:euler} holds for all short complexes in $\perf R$.

Indeed, we can assume any such $F$ is minimal, and so has the form
\[
0 \lra R^a \xra{\ \phi\ } R^a \lra 0\,.
\]
with $\phi(R^a)\subseteq \fm R^a$. With $\det(\phi)$ denoting the determinant of $\phi$,  from \cite[Lemma A.2.6]{Fulton:1998} one gets the first equality below
\[
\chi(F) = \chi( K(\det(\phi);R)) = e(\det(\phi), R)\ge e(\fm^a, R) = a \cdot e(R)\,.
\]
The  second equality is from \eqref{eq:serre}. The inequality holds because $\det(\phi)$ is in  $\fm^a$.
\end{example}

It will be helpful to consider all finitely generated $R$-modules $U$ with the property that for any short complex $F$ in $\perf R$ and integer $0\le i\le d$ one has
\begin{equation}
\label{eq:eulerU}
\binom di \chi_R(F\otimes_RU) \ge \beta^R_i(F)  e_d(U)\,.
\end{equation}
Observe that \eqref{eq:euler} is the case $U=R$. Here is a simple observation.

\begin{lemma}
\label{le:filtration}
Let $F$ be a short complex in $\perf R$.  If an $R$-module $W$ admits a finite filtration with subquotient modules satisfying \eqref{eq:eulerU} for $F$, then so does $W$.
\end{lemma}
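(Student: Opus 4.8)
The plan is to reduce at once to a two-step filtration and then induct on its length. So let $0\subseteq W'\subseteq W$ be submodules with $W'$ and $W/W'$ both satisfying \eqref{eq:eulerU} for the fixed short complex $F$; the goal is to deduce \eqref{eq:eulerU} for $W$. The mechanism is that, with $F$ held fixed, both sides of \eqref{eq:eulerU} are additive on short exact sequences in the module that is tensored against $F$, whereas the coefficients $\beta^R_i(F)$ do not involve that module.

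Concretely, because $F$ is a bounded complex of free (hence flat) modules, the short exact sequence $0\to W'\to W\to W/W'\to 0$ produces a short exact sequence of complexes
\[
0\lra F\otimes_R W'\lra F\otimes_R W\lra F\otimes_R (W/W')\lra 0\,.
\]
These complexes all lie in $\perf R$: a finite free complex with finite length homology is exact on the punctured spectrum, so $F\otimes_R U$ is too for every finitely generated $U$, and its finitely generated homology is then of finite length; in particular $\chi_R(F\otimes_R U)$ is defined. The long exact homology sequence, together with additivity of length, gives $\chi_R(F\otimes_R W)=\chi_R(F\otimes_R W')+\chi_R(F\otimes_R (W/W'))$. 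On the other side, $e_d(-)$ is additive on short exact sequences, as recalled in \ref{ch:ed-symbol}, so $e_d(W)=e_d(W')+e_d(W/W')$. Adding the two instances of \eqref{eq:eulerU} then yields
\[
\binom di \chi_R(F\otimes_R W)=\binom di \chi_R(F\otimes_R W')+\binom di \chi_R(F\otimes_R (W/W'))\ge \beta^R_i(F)\,e_d(W')+\beta^R_i(F)\,e_d(W/W')=\beta^R_i(F)\,e_d(W)
\]
for every $0\le i\le d$, which is \eqref{eq:eulerU} for $W$.

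Finally, I would run an induction on the number of steps $n$ in a general filtration $0=W_0\subseteq W_1\subseteq\cdots\subseteq W_n=W$: the submodule $W_{n-1}$ carries the induced filtration of length $n-1$ with the same subquotients, so it satisfies \eqref{eq:eulerU} by the inductive hypothesis, and then the two-step case applied to $W_{n-1}\subseteq W$ finishes the argument. I do not expect a genuine obstacle here; the only point deserving a line of justification is that $F\otimes_R U$ has finite length homology, so that $\chi_R(F\otimes_R U)$ makes sense, and that is immediate from $F$ being exact off $\fm$, exactly as used in \ref{ch:short}.
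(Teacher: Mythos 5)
Your proof is correct and uses the same ingredients as the paper's: additivity of $\chi_R(F\otimes_R -)$ across short exact sequences (via flatness of $F$) together with additivity of $e_d(-)$. The paper simply sums over all subquotients at once rather than reducing to two-step filtrations and inducting, so the two arguments are essentially identical.
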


\begin{proof}
Let $U_1,\dots, U_n$ be the subquotients of the filtration in the hypothesis. Evidently $\dim_RW \ge \dim_R U_j$ for each $j$, so the additivity of multiplicity   \cite[Corollary~4.7.7]{Bruns/Herzog:1998} yields the last equality below:
\begin{align*}
\binom di \chi(F,W)
	&= \sum_i \binom di  \chi_R(F,U_j) \\
	& \ge \sum_j \beta^R_i(F)  e_d( U_j)\\
	& = \beta^R_i(F)  e_d(W)\,.
\end{align*}
The first equality is by the additivity of Euler characteristics and the inequality holds by hypothesis.
\end{proof}

The proof suggests working in the Grothendieck group on $R$. We pick up on this theme in the ensuing sections. Next we introduce a class of modules for which the inequalities in \eqref{eq:eulerU} hold.

\subsection*{Ulrich modules}
For any  finitely generated maximal  Cohen-Macaulay $R$-module $M$ there is an inequality
\[
e_d(M) \ge \nu_R(M)\,.
\]
See, for example, \cite[Proposition 1.1]{Brennan/Herzog/Ulrich:1987}.  An \emph{Ulrich module} is a maximal Cohen-Macaulay $R$-module $U$ with $e_d(U)=\nu_R(U)$.  When the residue field of $R$ is infinite, this condition is equivalent to: there exists  a system of parameters $\bsx$ for $R$ with
\[
\kos{\bsx}U \simeq U/\fm U\,;
\]
in particular, $\bsx U = \fm U$. In fact, when $U$ is Ulrich any generic choice of $\bsx$ has these properties.  This characterization becomes helpful once we make the following observation which is immediate from the discussion in \ref{ch:gonflament}.

\begin{chunk}
\label{ch:Ulrich-basechange}
If $U$ is an Ulrich $R$-module and $(R,\fm) \to (S,\fn)$  is a  flat local map such that $\fm S =\fn$,  then the $S$-module $S\otimes_RU$ is Ulrich.
\end{chunk}

\begin{proposition}
\label{pr:ulrich-good}
Ulrich modules satisfy \eqref{eq:eulerU} for all short complexes in $\perf R$.
\end{proposition}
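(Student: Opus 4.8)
The plan is to reduce to the case where the residue field $k$ is infinite, and then to exploit the Koszul-type characterization of Ulrich modules recorded just before the statement. By \ref{ch:gonflament} and \ref{ch:Ulrich-basechange}, a faithfully flat base change $R \to S$ with $\fm S = \fn$ preserves all the invariants in \eqref{eq:eulerU}: it preserves the Euler characteristic $\chi_R(F \otimes_R U)$, the Betti numbers $\beta_i^R(F)$, and the multiplicity $e_d(U)$, and it sends an Ulrich module to an Ulrich module. So I may assume $k$ is infinite. (One should check $F \otimes_R U$ base-changes correctly and that finite length homology is preserved, but this is routine from \ref{ch:gonflament}.)

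Assuming $k$ infinite, fix an Ulrich module $U$ and choose a system of parameters $\bsx = x_1, \dots, x_d$ for $R$ that is generic in the sense recalled before the statement, so that $\kos{\bsx}U \simeq U/\fm U$; in particular $\bsx U = \fm U$, and $U/\bsx U = U/\fm U$ is a $k$-vector space of rank $\nu_R(U) = e_d(U)$. Now let $F$ be any short complex in $\perf R$, which I may assume is minimal, so $\beta_i^R(F) = \rank_R F_i$. The key computation is to evaluate $\chi_R(F \otimes_R U)$ using the resolution $\kos{\bsx}U$ of $U/\fm U$. First, since $U$ is maximal Cohen-Macaulay, \ref{ch:short} gives $\HH i{F \otimes_R U} = 0$ for $i \geq 1$, so $\chi_R(F \otimes_R U) = \length_R \HH 0{F \otimes_R U}$. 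Next, I want to compare $F \otimes_R U$ with $F \otimes_R (U/\fm U)$ by resolving $U/\fm U$ via $\kos{\bsx}U$, or equivalently by running a spectral sequence / using that $F \otimes_R^{\mathrm{L}} U \simeq F \otimes_R^{\mathrm{L}} \kos{\bsx}U$ — wait, that is not an equivalence. Let me instead argue directly: apply $\length_R$-additivity to the filtration of $U/\fm U$ by... actually the cleanest route is to observe $\chi_R(F \otimes_R U) = \chi_R(F \otimes_R U/\fm U)$ is false in general; rather one should compare the two via the Koszul complex. The correct statement is that $F \otimes_R U \simeq F \otimes_R \kos{\bsx}U \simeq (F \otimes_R U) \otimes_R \kos{\bsx}R$ when $\bsx$ kills $\HH 0{F \otimes_R U}$, which it does since $\bsx U = \fm U$ and $\fm$ annihilates the finite length module $\HH 0{F \otimes_R U}$ up to a power — so I instead pass to $\HH 0{F \otimes_R U} \otimes_R \kos{\bsx}R$. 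Concretely: $\HH 0{F \otimes_R U}$ is a module over $R/(\bsx)$ (as $\bsx F \otimes U = F \otimes \bsx U \subseteq F \otimes \fm U$, and more carefully $\bsx$ acts as zero on $H_0$ because $\bsx U = \fm U$ implies the image of $\bsx$ in $H_0$ lands in $\fm \cdot H_0$... this needs care).

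Let me restructure the decisive step. The honest approach: compute $\chi_R(F \otimes_R U)$ using that $U$ has an Ulrich system of parameters. We have the Koszul resolution $\kos{\bsx}U \xrightarrow{\ \simeq\ } U/\fm U$. Tensoring $F$ with this quasi-isomorphism over $R$ gives $F \otimes_R \kos{\bsx}U \simeq F \otimes_R (U/\fm U)$ in the derived category, since $F$ is a complex of frees. Now $F \otimes_R \kos{\bsx}U = \kos{\bsx}{(F \otimes_R U)}$, and because $U$ is maximal Cohen-Macaulay, $F \otimes_R U$ has homology concentrated in degree $0$, equal to $N \colonequals \HH 0{F \otimes_R U}$ of finite length; hence $\kos{\bsx}{(F \otimes_R U)} \simeq \kos{\bsx}N$. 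On the other side, $F \otimes_R (U/\fm U) = F \otimes_k (U/\fm U) \cong (F \otimes_R k)^{\oplus \nu_R(U)}$ since $U/\fm U \cong k^{\nu_R(U)}$, whence $\HH i{F \otimes_R (U/\fm U)} = \tor_i^R(k, F)^{\oplus \nu_R(U)}$, of total $k$-rank $\beta_i^R(F) \cdot \nu_R(U)$. Comparing Euler characteristics: $\chi_R(\kos{\bsx}N) = \chi(\bsx; N) = e_d(N)$ by \eqref{eq:serre} — but wait, $N$ has finite length so $e_d(N) = 0$ unless $d = 0$. That cannot be right either; the issue is that $\kos{\bsx}{(F \otimes_R U)}$ is NOT $\simeq \kos{\bsx}N$ because Koszul homology of $N$ involves higher terms. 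Since $\bsx$ annihilates a power of $\fm$ only, $N$ need not be killed by $\bsx$. I think the resolution of this tangle is: $N$ is annihilated by $\bsx$ up to nilpotents is the wrong framing — rather, since $\bsx U = \fm U$, multiplication by $x_j$ on $U$ factors through $\fm U$, but on $F \otimes U$ and thence on $N$... Actually the clean fact is: $x_j$ acts nilpotently on the finite length module $N$, but more is true when $U$ is Ulrich: I will argue that $\chi_R(F \otimes_R U) = \chi(\bsx; F \otimes_R U) = \chi(\bsx; F) \cdot \length_R(U/\bsx U)$ — no.

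\textbf{Revised plan, the part I expect to be hard.} The real content is the identity
\[
\chi_R(F \otimes_R U) \;\geq\; \binom{d}{i}^{-1} \beta_i^R(F)\, e_d(U)
\]
and I expect the crux to be a lower bound, not an exact identity, obtained by comparing $F \otimes_R U$ against the Koszul complex. Here is the route I would actually carry out. Since $U$ is maximal Cohen-Macaulay and $F$ is minimal with $F_j$ free of rank $\beta_j^R(F)$, the complex $F \otimes_R U$ is a complex of the form $0 \to U^{\beta_d} \to \cdots \to U^{\beta_0} \to 0$ with homology only $N = \HH 0{F \otimes_R U}$ in degree zero. Taking lengths of a minimal such complex term-by-term is not directly available, but Euler characteristics are additive, so $\chi_R(F \otimes_R U) = \length_R N$. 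To bound $\length_R N$ below, tensor further with $\kos{\bsx}R$: since $\bsx$ is a system of parameters and $F \otimes_R U$ is a bounded complex with finite-length homology, $\kos{\bsx}R \otimes_R (F \otimes_R U)$ has finite length homology and $\chi(\kos{\bsx}R \otimes_R (F \otimes_R U)) = \chi(\bsx; N)$. Now reassociate: $\kos{\bsx}R \otimes_R F \otimes_R U = F \otimes_R \kos{\bsx}U \simeq F \otimes_R (U/\fm U) \cong \bigoplus^{\nu_R(U)} (F \otimes_R k)$, using the Ulrich property $\kos{\bsx}U \simeq U/\fm U$ and that $F$ is a complex of frees. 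Therefore $\chi(\bsx; N) = \nu_R(U)\cdot \chi_R(F \otimes_R k) = \nu_R(U) \cdot \sum_j (-1)^j \beta_j^R(F)$. On the other hand, for the finite length module $N$ one has the standard inequalities relating $\chi(\bsx; N)$ (equivalently its $i$-th partial Euler characteristics) to $\length_R N = \chi_R(F \otimes_R U)$; specifically, for a finite length module over a ring with a system of parameters $\bsx$ of length $d$, each Koszul homology $\HH i{\bsx; N}$ has length at most $\binom{d}{i}\length_R N$ (since $\HH i{\bsx;N}$ is a subquotient of $\bigwedge^i R^d \otimes N = N^{\binom{d}{i}}$). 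Combining: for each $i$,
\[
\beta_i^R(F)\, e_d(U) \;=\; \beta_i^R(F)\,\nu_R(U) \;=\; \text{(the coefficient identity I must extract)}\,,
\]
and then $\binom{d}{i}\length_R N \geq \length_R \HH i{\bsx; N} = \length_R \HH i{F \otimes_R U/\fm U} = \beta_i^R(F)\,\nu_R(U) = \beta_i^R(F)\, e_d(U)$, which is exactly \eqref{eq:eulerU}. The one place requiring genuine care — and what I'd flag as the main obstacle — is the step asserting $\HH i{F \otimes_R U/\fm U} \cong \HH i{\bsx; F \otimes_R U} = \HH i{\bsx; N}$: the first isomorphism is the Ulrich quasi-isomorphism tensored with the free complex $F$ (routine), but the second requires that $F \otimes_R U$, though a complex, behaves like its homology $N$ after applying $\kos{\bsx}{-}$, i.e. that the hyper-Koszul homology $\HH i{\kos{\bsx}R \otimes_R (F\otimes_R U)}$ equals $\HH i{\bsx; N}$ because $F \otimes_R U$ has homology concentrated in a single degree. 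That last point is a clean spectral sequence collapse (the spectral sequence of the double complex $\kos{\bsx}R \otimes F \otimes U$ degenerates since $F \otimes U$ is acyclic outside degree $0$), and is the only non-formal input; everything else is bookkeeping with \eqref{eq:serre}, additivity of $\chi$, and the Ulrich characterization from the paragraph preceding the statement.
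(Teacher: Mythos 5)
Your revised plan is correct and is essentially the paper's own argument: pass to infinite residue field, use the Ulrich system of parameters with $\kos{\bsx}U\simeq U/\fm U\cong k^{\nu_R(U)}$, observe that $F\otimes_R U$ has homology concentrated in degree zero (so its derived tensor with $\kos{\bsx}R$ agrees with $\kos{\bsx}{\HH 0F\otimes_R U}$), and then bound $\length_R\HH i{\bsx;N}$ by $\binom{d}{i}\length_R N$ via the Koszul complex's free ranks. The earlier false starts in your write-up are not needed; once you reach the "Revised plan" the argument is complete and matches the paper's proof step for step.
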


\begin{proof}
Let $U$ be an Ulrich module and $F$ a short complex in $\perf R$.  Given \ref{ch:gonflament} and \ref{ch:Ulrich-basechange}, we can inflate the residue field of $R$ if necessary to ensure that it is infinite. There then exists a sequence   $\bsr\colonequals r_1, \dots, r_d$, where $d=\dim R$,   such that the sequence $\bsr$ is regular on $U$ and  $U/\bsr U\cong k^{e(U)}$. One has quasi-isomorphisms
\[
\kos{\bsr}{F\otimes_RU}  \simeq F \otimes_R \kos{\bsr} U \simeq F\otimes_R k^{e(U)} \simeq (F\otimes_Rk)^{e(U)}
\]
Therefore one gets equalities
\[
 \length_R\HH i{\bsr; F\otimes_RU} = \beta^R_i(F)e(U)\,.
 \]
On the other hand, since $U$ is maximal Cohen-Macaulay $H_i(F\otimes_RU)=0$ for $i\ge 1$, as discussed~\ref{ch:short}, so that one has quasi-isomorphisms
\[
F\otimes_R U\simeq \HH 0{F\otimes_R U} \cong \HH 0F \otimes_RU\,.
\]
This yields a quasi-isomorphism
\[
\kos{\bsr}{F\otimes_RU} \simeq \kos{\bsr}{\HH 0F\otimes_RU}\,.
\]
From these computations we get
\begin{align*}
\beta^R_i(F)e(U)
	&= \length_R \HH i{\bsr; F\otimes_RU} \\
	&= \length_R \HH i{\bsr; \HH 0F\otimes_RU} \\
	& \leq \binom di \length_R(\HH 0F \otimes_RU) \\
	& = \binom di \chi_R(F\otimes_RU) \,.
\end{align*}
This is the desired inequality.
   \end{proof}

A local ring $(R,\fm)$ is a \emph{strict complete intersection} if its associated graded ring $\mathrm{\gr}_{\fm}(R)$ is complete intersection.  For example, every hypersurface is a strict complete intersection. Another example is the localization of a standard graded complete intersection at its homogenous maximal ideal.

\begin{theorem}
\label{th:sci}
When $R$ is a strict complete intersection local ring, any nonzero finite length $R$-module $M$ of finite projective dimension satisfies
\[
\binom di \length_R(M) \ge \beta^R_i(M)  e(R) \quad\text{for each $0\le i\le d$}\,.
\]
In particular, $\length_R(M) \ge (\beta^R(M)/2^d)\cdot e(R)$.
\end{theorem}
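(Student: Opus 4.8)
The plan is to reduce everything to Proposition~\ref{pr:ulrich-good} by producing an Ulrich $R$-module $U$ whose class $[U]$ in the rational Grothendieck group $\euler R$ modulo numerical equivalence is a positive multiple of $[R]$, and then exploiting that the two invariants entering \eqref{eq:eulerU} factor through $\euler R$. First I would pass to the case that $k$ is infinite via \ref{ch:gonflament}: the relevant flat extension preserves $\length_R M$, the Betti numbers, $e(R)$, finiteness of projective dimension, and the strict complete intersection hypothesis (since $\gr_\fn S\cong\gr_\fm R\otimes_k(S/\fn)$). Because $\gr_\fm R$ is a complete intersection it is Cohen--Macaulay, hence so is $R$; thus the minimal free resolution $F$ of $M$ is a short complex in $\perf R$ with $\beta^R_i(F)=\beta^R_i(M)$ and $\chi_R(F)=\length_R M$.

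With $U$ in hand I would apply Proposition~\ref{pr:ulrich-good} to the pair $(F,U)$, giving $\binom di\chi_R(F\otimes_R U)\ge\beta^R_i(F)\,e_d(U)$ for $0\le i\le d$. The key observation is that both sides depend on $U$ only through $[U]\in\euler R$: the left-hand side is $\binom di$ times the pairing $\pair{F}{[U]}$, and $e_d(-)$ too descends to $\euler R$ because $e_d(N)=\chi(\bsr;N)=\pair{\kos{\bsr}R}{N}$ for a system of parameters $\bsr$ generating a minimal reduction of $\fm$, with $\kos{\bsr}R\in\perf R$. Writing $[U]=r[R]$ with $r>0$ --- positivity because $e_d(U)>0$ for $U$ a nonzero maximal Cohen--Macaulay module while $e_d(R)=e(R)>0$ --- and cancelling $r$ yields $\binom di\chi_R(F)\ge\beta^R_i(F)\,e(R)$, i.e.\ $\binom di\length_R M\ge\beta^R_i(M)\,e(R)$. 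Summing over $i$ and using $\sum_i\binom di=2^d$ gives the displayed consequence.

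The one substantive input --- and the only place the strict complete intersection hypothesis is really used --- is the existence of the ``balanced'' Ulrich module $U$ with $[U]\in\bbR[R]$. Here I would invoke the work of Backelin, Herzog and Ulrich~\cite{Backelin/Herzog/Ulrich:1991} constructing linear maximal Cohen--Macaulay (hence Ulrich) modules over strict complete intersections, and check that an appropriate one, or a direct sum of a few of them, has class a rational multiple of $[R]$; equivalently, one is verifying the hypothesis of Theorem~\ref{th:UlrichspanR}. I expect this to be the delicate point: $\euler R$ can have rank bigger than one even for complete intersections (for instance $R=k[[x,y]]/(xy)$, where $[R/(x)]$ and $[R/(y)]$ are numerically independent), so the proportionality is not automatic from a dimension count and must come from the symmetry of the BHU construction across the top-dimensional components of $\spec R$ --- e.g.\ for $k[[x,y]]/(xy)$ one should take $U=R/(x)\oplus R/(y)$, which is Ulrich with $[U]=[R/(x)]+[R/(y)]=[R]$. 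The remaining steps are formal once Proposition~\ref{pr:ulrich-good} and the descent of $\chi_R(F\otimes_R-)$ and $e_d$ to $\euler R$ are granted.
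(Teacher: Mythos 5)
Your strategy is correct in outline and essentially the same as the paper's: produce an Ulrich $R$-module whose class is proportional to $[R]$, then apply Proposition~\ref{pr:ulrich-good}. Where the paper routes the conclusion through Lemma~\ref{le:filtration} applied to a filtration of $R^s$, you route it through $\euler R$ via Corollary~\ref{co:ed-symbol}; that difference is purely bookkeeping. The formal steps you lay out (passing to an infinite residue field, identifying $\chi_R(F\otimes_R U)$ with the pairing, the descent of $e_d$, and the cancellation of $r$) are all correct.

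However, at the one substantive point you stop short, and your hesitation is aimed at the wrong worry. The input the paper extracts from \cite[\S 2]{Backelin/Herzog/Ulrich:1991} (see also \cite[Theorem~V.28]{Ma:2014}) is not merely that strict complete intersections admit Ulrich modules, but the sharper fact that there is a positive integer $s$ such that the \emph{free} module $R^s$ has a finite filtration whose subquotients $U_1,\dots,U_n$ are Ulrich. This already gives $s[R]=\sum_j[U_j]$ in $\grog R$, not merely in $\euler R$, so $U:=\bigoplus_j U_j$ --- which is Ulrich, since $e_d(-)$ and $\nu_R(-)$ are both additive over direct sums and MCM is preserved --- has $[U]=s[R]$ exactly. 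No ``balancing across components'' argument is needed; the symmetry you were groping for in the $k[[x,y]]/(xy)$ example is baked into the BHU construction via the filtration $0\subset (x)\subset R$ with subquotients $(x)\cong R/(y)$ and $R/(x)$, both Ulrich. And once you know $[U]=s[R]$ holds in $\grog R$, you may sidestep the Grothendieck-group machinery entirely: additivity of $\chi_R(F\otimes_R -)$ and $e_d(-)$ over the filtration of $R^s$ (Lemma~\ref{le:filtration}) yields \eqref{eq:eulerU} for $R^s$, and dividing by $s$ finishes. That is the paper's proof, and it is the cleaner version of your last paragraph.
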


\begin{proof}
The hypothesis on $R$ implies that there exists a positive integer $s$ such that the free module $R^s$ has a finite filtration whose subquotients are Ulrich modules; this is implicit in \cite[\S2]{Backelin/Herzog/Ulrich:1991}; see also \cite[Theorem~V.28]{Ma:2014}.  Thus Proposition~\ref{pr:ulrich-good} and Lemma~\ref{le:filtration} imply that \eqref{eq:eulerU} holds for $U\colonequals R^s$  and the minimal free resolution $F$ of $M$.  Given the equalities
\[
\chi_R(F\otimes_R R^s) =s \cdot \length_R(M) \qquad\text{and}\qquad e(R^s)=s\cdot e(R)\,,
\]
the desired result follow.
\end{proof}

Next we prove the following result about perfect modules. This is connected to Lech's conjecture; we postpone that discussion to Section ~\ref{se:lech}.

\begin{proposition}
\label{pr:length-perfect}
Let $R$ be a local ring with infinite residue field.  If each short complex  in $\perf R$ satisfies \eqref{eq:euler} for $i=0$, then any perfect $R$-module $M$ satisfies
\[
e(M) \geq \nu_R(M) \cdot e(R)\,.
\]
If \eqref{eq:euler} holds for all $i$, then
\[
e(M) \geq \frac{\beta^R(M)}{2^{d-\dim_R(M)}} \cdot e(R)\,.
\]
\end{proposition}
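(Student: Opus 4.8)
The plan is to build from the perfect module $M$ a short complex in $\perf R$, apply the standing hypothesis to it, and read off the two inequalities; we may assume $M\ne 0$. First I would record what perfectness buys numerically. Put $c\colonequals \pdim_R M$ and $t\colonequals \dim_R M$. Since $\grade_R M=c$, the chain of inequalities recalled in \ref{ch:codim} collapses, forcing in particular $\height(\ann_R M)=\dim R-\dim_R M=\pdim_R M$, so that $c+t=d$. Because the residue field of $R$ is infinite, I would then, as in the discussion following \ref{ch:ed-symbol}, pick a sequence $\bsy\colonequals y_1,\dots,y_t$ in $\fm$ that is a system of parameters for $M$ and satisfies $\chi(\bsy;M)=e(M)$.

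Next, let $G$ be the minimal free resolution of $M$, a complex of length $c$, and set $F\colonequals G\otimes_R\kos{\bsy}{R}$. This is a bounded complex of finitely generated free $R$-modules concentrated in homological degrees $0,\dots,c+t=d$. Since $G\simeq M$ in the derived category, $\HH iF\cong\HH i{\bsy;M}$ for every $i$, so all homology modules of $F$ have finite length ($\bsy$ being a system of parameters for $M$), and $\HH 0F=M/\bsy M\ne 0$ by Nakayama. Hence $F$ is a short complex in $\perf R$. The differentials of $G$ and of $\kos{\bsy}{R}$ have all their entries in $\fm$, so $F$ is minimal, and \ref{ch:finite-free} gives
\[
\beta^R_i(F)=\rank_R F_i=\sum_{p+q=i}\beta^R_p(M)\binom tq\,;
\]
in particular $\beta^R_0(F)=\nu_R(M)$ and $\beta^R(F)=2^t\beta^R(M)$. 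Finally, $\chi_R(F)=\sum_i(-1)^i\length_R\HH iF=\chi(\bsy;M)=e(M)$ by the choice of $\bsy$.

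It then remains to apply the hypothesis to the short complex $F$. If \eqref{eq:euler} holds for $i=0$, then $e(M)=\chi_R(F)\ge\beta^R_0(F)\,e(R)=\nu_R(M)\,e(R)$. If \eqref{eq:euler} holds for every $i$, summing the inequalities $\binom di\chi_R(F)\ge\beta^R_i(F)\,e(R)$ over $0\le i\le d$ gives $2^d\,e(M)\ge\beta^R(F)\,e(R)=2^t\beta^R(M)\,e(R)$, whence $e(M)\ge\beta^R(M)\,e(R)/2^{d-t}$, and $d-t=d-\dim_R M$ is the exponent asserted. I expect the only point needing care to be the verification that $F$ is genuinely a short complex --- that is, that its length equals $d$ --- which is exactly where perfectness of $M$ enters, via $c+t=d$; everything else is routine bookkeeping with Betti numbers of a tensor product and the classical computation of a multiplicity as a Koszul Euler characteristic.
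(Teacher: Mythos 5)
Your proof is correct and follows the same route as the paper: tensor a minimal free resolution $G$ of $M$ with the Koszul complex on a system of parameters $\bsy$ for $M$ chosen so that $\chi(\bsy;M)=e(M)$, check the result is a short complex in $\perf R$ using $\pdim_R M + \dim_R M = d$ for perfect $M$, and apply the hypothesis. The additional verifications you supply (minimality of the tensor product, the full Betti number formula, and the Nakayama argument that $\HH 0F\ne 0$) are all correct and merely make explicit details the paper leaves to the reader.
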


\begin{proof}
Set $c\colonequals \dim_R(M)$.  Since the residue field of $R$ is infinite, there exists  a system of parameters $\bsr\colonequals r_1,\dots,r_c$ for $M$ such
that $e(M) = \chi(K\otimes_RM)$, where  $K$ is the Koszul complex on $\bsr$; see \eqref{eq:serre}. Let $G$ be a minimal free resolution of $M$. Since $M$ is perfect the length of $G$ equals $d-c$. It follows that $K\otimes_RG$ is a short complex in $\perf R$. Thus the hypothesis yields the inequality below:
\[
e(M) = \chi(K\otimes_RG) \ge \nu_R (\HH 0{K\otimes_RG})\cdot e(R) = \nu_R(M) e(R)\,.
\]
The equality on the left holds because $K\otimes_RG\simeq K\otimes_RM$, whereas the equality on the right holds because $\HH 0{K\otimes_RG}=M/\bsr M$. This justifies the first claim.

In the same vein, if \eqref{eq:euler} holds for each $i$ then
\[
\binom di e(M) \ge \beta^R_i(K\otimes_RG )e(R)\,.
\]
Summing these gives $2^d e(M) \ge \beta^R(K\otimes_RG)e(R)$. Since
\[
\beta^R(K\otimes_RG) =\beta^R(K)\beta^R(G) = 2^c \beta^R(M)
\]
the desired inequality follows.
\end{proof}

\begin{chunk}
We have already noted that the inequalities in \eqref{eq:euler} do not hold for all local rings. They can also fail if  we allow  complexes that are not short.  For example, let $G$ be any short complex in $\perf R$ with $\hh G\ne 0$,  choose an element $r\in \fm$, and let $F$ be the mapping cone of the map $G\xra{r} G$. It is easy to see that $F$ is a finite free complex of length $d+1$ satisfying $\hh F\ne 0$ and $\chi(F)=0$.

In a different direction, it is of interest to consider versions of \eqref{eq:euler} where the Euler characteristic of $F$ is replaced by the length of its homology module, $\hh F$. This becomes relevant only when $R$ is not Cohen-Macaulay. We have not much to say about this at the moment, except that here too the corresponding inequalities can fail for complexes that are not short; see \cite{Iyengar/Walker:2018} for similar phenomena. 

 Indeed, if $R$ is a complete intersection  of  codimension $c$  and $K$ the Koszul complex on a minimal list of generators of  $\fm$, then  $\HH 1 K \cong k^c$ and there is an isomorphism of $k$-algebras
\[
\hh K \cong \Lambda_k \shift \HH 1K \,;
\]
see \cite[Theorem~2.3.9]{Bruns/Herzog:1998}. Therefore $\length_R \hh K= 2^c$. On the other hand the multiplicity of $R$ can be much larger; for example, when
\[
R\colonequals k[x_1,\dots,x_n]/(x_1^{d_1},\dots, x_c^{d_c})
\]
one has $e(R) = d_1\cdots d_c$, which can be arbitrarily larger than $2^c$.
\end{chunk}

\section{Grothendieck groups}
\label{se:grog}
In this section we introduce Grothendieck groups of modules and related constructions. These  play a crucial role in the remainder of this work. As before  $(R,\fm,k)$ will be a local ring and $d\colonequals \dim R$.

\begin{chunk}
\label{ch:pairing}
Let $\grog R$ be the Grothendieck group of finitely generated $R$-modules. Let $\knot R$ be the Grothendieck group of  the category of finite free complexes with finite length homology, $\perf R$, modulo the exact complexes; see \cite{Gillet/Soule:1987}.   Given a finitely generated $R$-module $U$ and a complex $L$ in $\perf R$, set
\[
\pair{L}{U} \colonequals \chi (L\otimes_RU) = \sum_i (-1)^i \length_R \HH i{L\otimes_RU}\,.
\]
It induces a  pairing on Grothendieck groups
\[
\pair{-}{-} \colon \knot R  \otimes_{\bbZ} \grog R \lra \bbZ\,.
\]
We write   $\rgrog R$ for $\grog R$ modulo the subgroup of those classes $\alpha$ in $\grog R$ such that $\pair{-}{\alpha} = 0$ on  $\knot R$. This is the Grothendieck group of $R$ \emph{modulo numerical equivalence}. Likewise we write $\rknot R$ for the quotient of $\knot R$  by classes $\beta$ for which $\pair{\beta}{-} = 0$.  Abusing notation a bit we write
\[
\pair{-}{-}\colon \rknot R  \otimes_{\bbZ} \rgrog R  \lra \bbZ
\]
also for the induced pairing.

For an abelian group $A$ we set $A_\bbR  = A \otimes_\bbZ \bbR$.  In the sequel it will be convenient to work with the $\bbR$-vector spaces
\[
\euler R \colonequals {\rgrog R}_{\bbR}\qquad\text{and}\qquad
\eulerk R\colonequals {\rknot R}_{\bbR} \,.
\]
One has  an induced pairing of $\bbR$-vector spaces:
\[
\pair{-}{-}  \colon  \eulerk R  \otimes_{\bbR} \euler R \lra \bbR\,.
\]
Each of the induced maps
\[
\euler R \to \hom_{\bbR}(\eulerk R,\bbR) \qquad\text{and}\qquad
\eulerk R \to \hom_{\bbR}(\euler R,\bbR)
\]
is injective by construction. In particular, the pairing is perfect when the $\bbR$-vector space $\euler R$ is finite dimensional.  Kurano~\cite[Theorem~3.1]{Kurano:2004} has proved that $\euler R$ is finite dimensional whenever $R$ contains $\bbQ$ as a subring, or is essentially of finite type over a field,  $\bbZ$, or a complete discrete valuation ring. This applies in particular to the rings we consider in Section~\ref{se:lim-Ulrich-sheaves} onwards.
\end{chunk}

In what follows, the following set of primes in $R$ will play a crucial role:
\[
\Lambda(R)\colonequals \{\fp\in\spec R\mid \dim(R/ \fp)=\dim R\}\,.
\]
The assignment $[M]\mapsto (\length_{R_\fp}M_\fp)_{\fp\in \Lambda(R)}$ induces an $\bbR$-linear map
\begin{equation}
\label{eq:g0}
{\grog R}_{\bbR} \lra \bbR^{\Lambda(R)}
\end{equation}
This map is onto since it sends the collection $\{[R/\fp]\}_{\fp\in\Lambda(R)}$ to a basis of $\bbR^{\Lambda(R)}$. Moreover the kernel is generated by classes of the form $[R/\fp]$ with $\dim(R/\fp) < d$.

The following result is implicit in the proof of \cite[Proposition~3.7]{Kurano:2004}.

\begin{proposition}
\label{pr:representable}
For any $\bbR$-linear map $\gamma\colon \bbR^{\Lambda(R)} \to \bbR$, the composition of the map \eqref{eq:g0} with $\gamma$ coincides with $\pair{\alpha}{-}$ for some class $\alpha$ in ${\knot R}_{\bbR}$.
\end{proposition}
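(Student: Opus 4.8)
The plan is to exhibit $\alpha$ as an explicit $\bbR$-linear combination of classes of Koszul complexes on systems of parameters of $R$. First the reduction: since the pairing is $\bbR$-bilinear and $\gamma$ is a combination of the coordinate functionals on $\bbR^{\Lambda(R)}$, it suffices to treat $\gamma=\delta_{\fp_0}$, the $\fp_0$-th coordinate, for a fixed $\fp_0\in\Lambda(R)$. For a system of parameters $\bsx=x_1,\dots,x_d$ of $R$ the complex $\kos{\bsx}{R}$ lies in $\perf{R}$, so $[\kos{\bsx}{R}]\in\knot R$, and I claim
\[
\pair{[\kos{\bsx}{R}]}{M}=\sum_{\fp\in\Lambda(R)}\length_{R_\fp}(M_\fp)\,e((\bsx),R/\fp)\qquad\text{for all finitely generated }M.
\]
Both sides are additive on short exact sequences in $M$, so it is enough to check $M=R/\fq$: the primes in $\Lambda(R)$ are minimal, so $\length_{R_\fp}((R/\fq)_\fp)=\delta_{\fp\fq}$ for $\fp\in\Lambda(R)$, whence the right side is $e((\bsx),R/\fq)$ if $\fq\in\Lambda(R)$ and $0$ otherwise, while the left side is $\chi(\bsx;R/\fq)$, which equals $e((\bsx),R/\fq)$ when $\dim R/\fq=d$ by \cite[Theorem~4.7.6]{Bruns/Herzog:1998} and vanishes when $\dim R/\fq<d$ by the vanishing theorem for Koszul Euler characteristics. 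Equivalently, $\pair{[\kos{\bsx}{R}]}{-}$ is the composite of \eqref{eq:g0} with the functional $\gamma_\bsx\colon(y_\fp)_\fp\mapsto\sum_\fp e((\bsx),R/\fp)y_\fp$. Thus if $\delta_{\fp_0}=\sum_j c_j\gamma_{\bsx_j}$ then $\alpha=\sum_j c_j[\kos{\bsx_j}{R}]$ works, and the proposition reduces to: the functionals $\gamma_\bsx$ span the dual of $\bbR^{\Lambda(R)}$.

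For the spanning statement, fix $\fp_0$ and assume (by \ref{ch:gonflament}) that the residue field is infinite. For $k\geq1$ the ideal $\fp_0+\fm^k$ is $\fm$-primary, hence has a reduction generated by a system of parameters $\bsx^{(k)}$ of $R$ \cite[Remark~4.6.9]{Bruns/Herzog:1998}, so $e((\bsx^{(k)}),N)=e(\fp_0+\fm^k,N)$ for all $N$ of finite colength. Taking $N=R/\fp_0$ gives $e((\bsx^{(k)}),R/\fp_0)=e(\fm^k(R/\fp_0),R/\fp_0)=k^d\,e_d(R/\fp_0)$, with $e_d(R/\fp_0)>0$; and for $\fp\in\Lambda(R)$ with $\fp\neq\fp_0$ we get $e((\bsx^{(k)}),R/\fp)=e\big(\bar\fp_0+\bar\fm^k,\,R/\fp\big)$, where $\bar\fp_0\neq0$ is the image of $\fp_0$ in the domain $R/\fp$ and $\bar\fm$ its maximal ideal. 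The key estimate is that for a local domain $D$ of dimension $d$ and a nonzero proper ideal $J\subsetneq D$ one has $e(J+\fm_D^k,D)=o(k^d)$ as $k\to\infty$. Granting this, $k^{-d}\gamma_{\bsx^{(k)}}\to e_d(R/\fp_0)\,\delta_{\fp_0}$ in the dual of $\bbR^{\Lambda(R)}$; since that dual is finite dimensional, the span of the $\gamma_\bsx$ is a closed subspace, hence contains $\delta_{\fp_0}$. As $\fp_0$ was arbitrary, the $\gamma_\bsx$ span, and the proof is complete.

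It remains to prove the displayed multiplicity estimate, which is the only non-formal step and the one I expect to require the most care (though it has a short proof). Pick $0\neq g\in J$; as $D$ is a domain, $g$ is a nonzerodivisor, so $\dim D/(g)=d-1$. Put $I=(g)+\fm_D^k$; then $e(J+\fm_D^k,D)\leq e(I,D)$ since $J+\fm_D^k\supseteq I$. From the exact sequences $0\to D/(I^n:g)\to D/I^n\to D/(I^n+(g))\to0$ (first map multiplication by $g$), using $I^{n-1}\subseteq I^n:g$ and $I^n+(g)=(g)+\fm_D^{kn}$, one gets $\length(D/I^n)-\length(D/I^{n-1})\leq\length\big((D/(g))/\bar\fm^{kn}\big)$; summing over $n$ and reading off leading terms yields $e(I,D)\leq e\big(\bar\fm^{k}(D/(g)),\,D/(g)\big)=k^{d-1}e(D/(g))$, which is $o(k^d)$. (When the residue field of $R$ is finite, the reduction to the infinite-residue-field case uses the routine compatibility of \eqref{eq:g0} and of the pairing with the flat extension of \ref{ch:gonflament}, under which $\Lambda$ is preserved.)
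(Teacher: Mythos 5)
Your approach is genuinely different from the paper's. The paper produces an explicit finite family of parameter systems $\bsx_{\bss}, y_2, \dots, y_d$ (indexed by tuples $\bss$ of positive integers), built from elements $x_i$ with $x_i \notin \fp_i$ and $x_i \in \fp_j$ for $j\neq i$, and reads off directly that the associated vectors are $(s_1 a_1, \dots, s_l a_l)$ with all $a_i>0$; these visibly span $\bbR^{\Lambda(R)}$. Your proof is instead asymptotic: for each $\fp_0 \in \Lambda(R)$ you build a sequence of parameter systems (minimal reductions of $\fp_0+\fm^k$) whose rescaled functionals converge to $e_d(R/\fp_0)\,\delta_{\fp_0}$, and you invoke closedness of subspaces of a finite-dimensional space. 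The reduction of the proposition to the spanning claim, the identity $\pair{\kos{\bsx}R}{M}=\sum_{\fp}\length_{R_\fp}(M_\fp)\,e((\bsx),R/\fp)$, and the estimate $e(J+\fm_D^k,D)=o(k^d)$ together with its telescoping proof are all correct.

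The gap is in your final parenthetical. Your construction requires minimal reductions, hence an infinite residue field, and you dismiss the reduction to that case as ``routine compatibility'' under which ``$\Lambda$ is preserved.'' It is not routine here. If $(R,\fm)\to(S,\fn)$ is a flat local extension with $\fm S=\fn$ and infinite residue field as in \ref{ch:gonflament}, your argument produces Koszul complexes and a class $\beta\in{\knot S}_{\bbR}$, and there is no evident way to descend this to a class $\alpha\in{\knot R}_{\bbR}$: the proposition is an existence statement, and existence of $\alpha$ over $R$ does not follow formally from existence of $\beta$ over $S$. Compatibility of the pairing and of the map \eqref{eq:g0} with the extension is real, but it only shows that the functional $M\mapsto\pair{\beta}{S\otimes_R M}_S$ on ${\grog R}_{\bbR}$ agrees with $\gamma$ composed with \eqref{eq:g0}; it does not show this functional is represented by a class in ${\knot R}_{\bbR}$. (Taking $S=R[x]_{\fm R[x]}$ does make $\Lambda(S)\to\Lambda(R)$ a bijection, but that alone gives no descent.) The paper avoids the issue entirely: its parameter systems come from a prime-avoidance choice of the $x_i$ and so exist over any residue field. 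To repair your argument you would either need to replace the minimal-reduction step with a construction valid over finite residue fields, or supply an actual descent argument for $\alpha$.
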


\begin{proof}
Let  $\fp_1,\dots,\fp_l$ be the element in $\Lambda(R)$. Pick elements $x_1, \dots, x_l$ such that $x_i \not\in \fp_i$ and $x_i \in \fp_j$ for all $j \ne i$.  Evidently the element $y_1\colonequals \sum_i x_i$ is not in any of the $\fp_j$ and so we may extend it to a system of parameters $y_1,\dots, y_d$ for $R$. For any positive integers $\bss\colonequals s_1, ..., s_l$, the elements
\[
{\bsx}_{\bss}, y_2,  \dots,  y_d \qquad\text{where ${\bsx}_{\bss}= \sum_i x_i^{s_i}$,}
\]
also form a system of parameters for $R$, as can be verified by going modulo $\fp_j$ for each  $j$. Let $K_{\bss}$ be the Koszul complex on the sequence above and consider the map
\[
\pair{K_{\bss}}{-}\colon {\grog R}_{\bbR}\lra \bbR\,.
\]
As with any Koszul complex on a system of parameters,  this map kills $[R/\fp]$ whenever $\dim(R/p) < d$ and hence it factors through the map \eqref{eq:g0}. Thus it determines a map $\bbR^{\Lambda(R)} \lra \bbR$ given by the tuple of integers
\[
v_{\bss} \colonequals ( \pair{K_{\bss}}{R/\fp_1},\dots, \pair{K_{\bss}}{R/\fp_l} )\,.
\]
We verify that the collection of all such tuples $v_{\bss}$, as the $\bss$ vary over $\bbZ_{\geqslant 1}^l$, span $\bbR^l$. Note that $v_{1,\dots, 1}$ is a tuple of strictly positive integers since $\pair{K_{1,\dots,1}}{-}$ computes multiplicity with respect to an $\fm$-primary ideal. Suppose
\[
v_{1,\dots,1} = (a_1,\dots, a_l) \qquad \text{with $a_i\ge 1$.}
\]
For any $i$  the sequence  ${\bsx}_{\bss}, y_2,  \dots,  y_d$ is $x_i^{s_i}, y_2,\dots y_d$ modulo $\fp_i$, so one has
\[
\pair {K_{\bss}}{R/\fp_i} = s_i \pair{K_{1,\dots,1}}{R/\fp_i}\,.
\]
Therefore  $v_{\bss} = (s_1 a_1,\dots, s_d a_d)$.  The result now follows easily.
\end{proof}

Here is one application of the preceding result.

\begin{corollary}
\label{co:representation}
The map \eqref{eq:g0} factors through $\euler R$.
\end{corollary}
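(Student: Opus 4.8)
The plan is to deduce this immediately from Proposition~\ref{pr:representable}. Write $\theta\colon {\grog R}_{\bbR}\to\bbR^{\Lambda(R)}$ for the map \eqref{eq:g0}. Unwinding definitions, $\euler R$ is the quotient of ${\grog R}_{\bbR}$ by the subspace $N_{\bbR}$ spanned by the \emph{numerically trivial} classes $\alpha$, namely those with $\pair{\beta}{\alpha}=0$ for every $\beta\in{\knot R}_{\bbR}$; and the canonical surjection ${\grog R}_{\bbR}\twoheadrightarrow\euler R$ has kernel $N_{\bbR}$. So what must be shown is just that $N_{\bbR}\subseteq\ker\theta$. Since the target $\bbR^{\Lambda(R)}$ is a finite product, a linear map vanishes on a subspace if and only if each of its coordinate components does; thus it suffices to treat, for each $\fp\in\Lambda(R)$, the composite $\pi_{\fp}\circ\theta$, where $\pi_{\fp}\colon\bbR^{\Lambda(R)}\to\bbR$ is projection onto the $\fp$-th factor.

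The one substantive step is to invoke Proposition~\ref{pr:representable} with $\gamma=\pi_{\fp}$: it yields a class $\alpha_{\fp}\in{\knot R}_{\bbR}$ with $\pi_{\fp}\circ\theta=\pair{\alpha_{\fp}}{-}$ as functionals on ${\grog R}_{\bbR}$. I would then observe that, by the very definition of numerical equivalence, $\pair{\alpha_{\fp}}{-}$ annihilates $N_{\bbR}$: for $\alpha\in N_{\bbR}$ one has $\pair{\alpha_{\fp}}{\alpha}=0$, hence $\pi_{\fp}(\theta(\alpha))=0$. As this holds for all $\fp\in\Lambda(R)$, we get $\theta(\alpha)=0$; thus $\theta$ factors through $\euler R$, as claimed.

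I do not expect a genuine obstacle here: all the content is already packaged in Proposition~\ref{pr:representable}, and the rest is formal. The only points that require a moment's care are matching the two slots of the pairing $\pair{-}{-}\colon \knot R\otimes_{\bbZ}\grog R\to\bbZ$ correctly --- it is the first (complex) argument that gets ``represented'' and the second (module) argument that is evaluated, which is precisely what the numerical-equivalence quotient on the module side is designed to respect --- and the routine remark that $N_{\bbR}$, being spanned by images of integral numerically trivial classes, is annihilated by $\pair{\beta}{-}$ for every $\beta\in{\knot R}_{\bbR}$ by $\bbR$-bilinearity of the pairing.
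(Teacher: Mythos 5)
Your argument is correct and is essentially the paper's own proof: both apply Proposition~\ref{pr:representable} to express the coordinate functionals of the map \eqref{eq:g0} as $\pair{\alpha}{-}$ for classes $\alpha\in{\knot R}_{\bbR}$, then use that numerically trivial classes are annihilated by all such functionals. The only cosmetic difference is that you specialize $\gamma$ to the coordinate projections $\pi_{\fp}$, whereas the paper phrases it as ``any $\gamma$''; the content, including the bilinearity remark needed to pass from $\knot R$ to ${\knot R}_{\bbR}$ and from $N$ to $N_{\bbR}$, is the same.
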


\begin{proof}
If $\beta \in {\grog R}_{\bbR}$ is numerically equivalent to $0$ then Proposition~\ref{pr:representable} proves that $\beta$ goes to zero under the composition of \eqref{eq:g0} with any map $\gamma\colon \bbR^{\Lambda(R)}\to \bbR$ and hence it maps to $0$ under \eqref{eq:g0} itself.
\end{proof}

The next application  concerns the multiplicity $e_d(-)$ from \ref{ch:ed-symbol}.

\begin{corollary}
\label{co:ed-symbol}
The map $e_d(-)\colon {\grog R}_{\bbR}\to \bbR$ coincides with $\pair {\alpha}-$ for some class $\alpha$ in ${\knot R}_{\bbR}$. In particular it induces  a map of $\bbR$-vector spaces
\[
e_d(-)\colon \euler R \lra \bbR\,.
\]
\end{corollary}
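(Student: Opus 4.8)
The plan is to reduce everything to Proposition~\ref{pr:representable}. The key point is that $e_d(-)$, regarded as an $\bbR$-linear functional on ${\grog R}_{\bbR}$ via the additivity on short exact sequences recalled in \ref{ch:ed-symbol}, factors through the surjection \eqref{eq:g0}. Indeed, the kernel of \eqref{eq:g0} is generated by the classes $[R/\fp]$ with $\dim(R/\fp)<d$, and for each such $\fp$ one has $e_d(R/\fp)=0$ by the criterion in \ref{ch:ed-symbol}. Hence $e_d(-)$ annihilates the generators of this kernel, and being $\bbR$-linear it annihilates the whole kernel; since \eqref{eq:g0} is onto, $e_d(-)$ factors as the composite of \eqref{eq:g0} with a unique $\bbR$-linear map $\gamma\colon \bbR^{\Lambda(R)}\to\bbR$. (One checks that $\gamma$ sends the standard basis vector indexed by $\fp$ to $e(R/\fp)$, but this will not be needed.)

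Next I would apply Proposition~\ref{pr:representable} to this $\gamma$, obtaining a class $\alpha\in{\knot R}_{\bbR}$ such that $\pair{\alpha}{-}$ equals the composite of \eqref{eq:g0} with $\gamma$, that is, $\pair{\alpha}{-}=e_d(-)$ on ${\grog R}_{\bbR}$. This is the first assertion. For the second, recall that by construction $\pair{\beta}{-}$ vanishes on numerically trivial classes for every $\beta$ in $\knot R$; applying this to $\beta=\alpha$ shows that $e_d(-)$ kills the kernel of the canonical surjection ${\grog R}_{\bbR}\to\euler R$ and therefore descends to an $\bbR$-linear map $e_d(-)\colon \euler R\to\bbR$. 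Alternatively, this descent is immediate from the factorization through \eqref{eq:g0} together with Corollary~\ref{co:representation}.

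I do not anticipate a genuine obstacle: all the substance sits in Proposition~\ref{pr:representable}, which is already established. The only points requiring care are bookkeeping ones --- that $e_d(-)$ is truly additive on short exact sequences, so that it defines a functional on ${\grog R}_{\bbR}$ at all, and that the kernel of \eqref{eq:g0} is as described --- and both are recorded in the material preceding this corollary.
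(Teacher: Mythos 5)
Your proof is correct and follows essentially the same route as the paper: show $e_d(-)$ factors through the map \eqref{eq:g0} and then invoke Proposition~\ref{pr:representable}, with the descent to $\euler R$ handled either directly or via Corollary~\ref{co:representation}. The only cosmetic difference is that where you derive the factorization from additivity of $e_d$, the description of the kernel of \eqref{eq:g0}, and the vanishing $e_d(R/\fp)=0$ for $\dim(R/\fp)<d$, the paper simply cites the local expression of multiplicity~\cite[Corollary~4.7.8]{Bruns/Herzog:1998}, which packages exactly that reasoning.
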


\begin{proof}
When the residue field of $R$ is infinite,  the Koszul complex on a minimal  generating set for a minimal reduction of $\fm$ does the job; see \eqref{eq:serre}. In general, the local expression of the multiplicity~\cite[Corollary~4.7.8]{Bruns/Herzog:1998} means that  $e_d(-)$ factors through the map \eqref{eq:g0}, so Proposition~\ref{pr:representable} gives the desired result. Given this, Corollary~\ref{co:representation} implies that $e_d(-)$ factors through $\euler R$.
\end{proof}

These observations and constructions allow one to extend the proof of Lemma~\ref{le:filtration} to deduce the result below.

\begin{theorem}
\label{th:UlrichspanR}
If $[R]$ is a positive $\bbR$-linear combination in $\euler R$ of classes of Ulrich modules, then \eqref{eq:euler} holds for all short complexes in $\perf R$. \qed
\end{theorem}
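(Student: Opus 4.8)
The plan is to mimic the proof of Lemma~\ref{le:filtration}, but carry out the computation in the vector space $\euler R$ rather than with actual modules, using the pairing from \ref{ch:pairing} to extract numerical inequalities. Fix a short complex $F$ in $\perf R$ and an integer $0 \le i \le d$. By Corollary~\ref{co:ed-symbol} there is a class $\alpha_i$ in $\eulerk R$ (coming from a Koszul complex, or the local expression for multiplicity) with $\pair{\alpha_i}{-} = e_d(-)$ on $\euler R$. Similarly, the complex $F$ itself defines a class $[F]$ in $\eulerk R$, and for any finitely generated $R$-module $U$ one has $\pair{[F]}{[U]} = \chi_R(F \otimes_R U)$; in particular $\pair{[F]}{[R]} = \chi_R(F)$.

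Next I would reinterpret Proposition~\ref{pr:ulrich-good}: it says that for every Ulrich $R$-module $U$ and every short $F$,
\[
\binom di \pair{[F]}{[U]} \ge \beta^R_i(F)\, \pair{\alpha_i}{[U]}\,,
\]
since $\chi_R(F \otimes_R U) = \pair{[F]}{[U]}$ and $e_d(U) = \pair{\alpha_i}{[U]}$. Equivalently, the linear functional $\lambda \colon \euler R \to \bbR$ defined by $\lambda(x) = \binom di \pair{[F]}{x} - \beta^R_i(F)\pair{\alpha_i}{x}$ is nonnegative on the class of every Ulrich module. Both $\binom di \pair{[F]}{-}$ and $\beta^R_i(F) \pair{\alpha_i}{-}$ are genuine $\bbR$-linear functionals on $\euler R$ — this is where Corollary~\ref{co:ed-symbol} and the fact that numerically trivial classes pair to zero are essential — so $\lambda$ is well-defined and $\bbR$-linear.

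Now invoke the hypothesis: $[R] = \sum_{j} c_j [U_j]$ in $\euler R$ with each $c_j > 0$ and each $U_j$ an Ulrich module. By linearity of $\lambda$,
\[
\binom di \chi_R(F) - \beta^R_i(F) e(R) \;=\; \lambda([R]) \;=\; \sum_j c_j\, \lambda([U_j]) \;\ge\; 0\,,
\]
because each $c_j > 0$ and each $\lambda([U_j]) \ge 0$. Here I use $\pair{[F]}{[R]} = \chi_R(F)$ and $\pair{\alpha_i}{[R]} = e_d(R) = e(R)$. This gives exactly \eqref{eq:euler}. The main point — and the only place any care is needed — is the passage from Proposition~\ref{pr:ulrich-good}, which is a statement about modules, to a statement about linear functionals on $\euler R$: one must check that $\chi_R(F \otimes_R -)$ and $e_d(-)$ both descend to $\euler R$, which is precisely the content of the pairing construction in \ref{ch:pairing} together with Corollary~\ref{co:ed-symbol}. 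There is no genuine obstacle; the argument is the ``Grothendieck group'' version of the filtration bookkeeping in Lemma~\ref{le:filtration}, with positivity of the coefficients $c_j$ playing the role that nonnegativity of multiplicities played there.
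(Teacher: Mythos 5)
Your argument is correct and is precisely the intended one: the paper states the theorem with only a \qed after remarking that ``these observations and constructions allow one to extend the proof of Lemma~\ref{le:filtration},'' and your passage to the linear functional $\lambda = \binom di \pair{[F]}{-} - \beta^R_i(F)\, e_d(-)$ on $\euler R$, combined with nonnegativity of $\lambda$ on Ulrich classes (Proposition~\ref{pr:ulrich-good}) and positivity of the coefficients $c_j$, is exactly that extension. One cosmetic point: the class representing $e_d(-)$ furnished by Corollary~\ref{co:ed-symbol} does not depend on $i$, so the subscript in $\alpha_i$ is superfluous.
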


Here are some simple examples where the hypothesis of the preceding result hold. The existence of Ulrich modules in these situation was recorded already in \cite{Brennan/Herzog/Ulrich:1987}; our only contribution is to observe that  they generate the class of the ring in the Gronthendieck group.

\begin{example}
\label{ex:dim1U}
The hypothesis of Theorem~\ref{th:UlrichspanR} holds when $\dim R\le 1$, and also when $R$ is a Cohen-Macaulay local ring of  minimal multiplicity.

Indeed if $\dim R=0$ the residue field $k$ is evidently an Ulrich module and $[R]=\length(R)\cdot [k]$ in $\grog R$.

Suppose $\dim R=1$ and set $S\colonequals R/\varGamma_{\fm}R$. This is a one-dimensional Cohen-Macaulay local ring, with maximal ideal $\fn\colonequals \fm S$. The  ideal $\fn^{e(S)-1}$ is an Ulrich  module for $S$, by  \cite[Lemma~2.1] {Brennan/Herzog/Ulrich:1987}, and hence also for $R$; this can be verified easily. Since $\pair{-}k=0$ on $\knot R$ we get that $[k]=0$ in $\euler R$ and hence, viewing $S/\fn^{e(R)-1}$ an $R$-module, one gets that $[S/\fn^{e(R)-1}]=0$. It follows that $[\fn^{e(R)-1}] = [S]$ in $\euler R$. It remains to note that $[\varGamma_{\fm}R]=0$ as well, so $[S]=[R]$, as desired.

Let $R$ be Cohen-Macaulay of minimal multiplicity; by the discussion above, we can assume $d\ge 1$. The $R$-module  $U\colonequals \syz^d(k)$ is an Ulrich $R$-module; see  \cite[Proposition 2.5]{Brennan/Herzog/Ulrich:1987}. By the definition of $U$ there is an exact sequence
\[
0\lra U\lra R^{n_{d-1}}\lra \cdots\lra R^{n_1}\lra R\lra k\lra 0.
\]
Since $\pair{-}k=0$ on $\knot R$ it follows that in $[U]=a[R]$ in $\rgrog R$ for some $a\ge 0$. It remains to observe that  $a\ge 1$ since the rank of $U$ is nonzero.
\end{example}

\begin{chunk}
\label{ch:even-hypersurface}
The hypothesis of Theorem~\ref{th:UlrichspanR} holds also when $R$ is a strict complete intersection, for in that case $R^s$ has a finite filtration whose subquotients are Ulrich modules; this fact was  key to proving Theorem~\ref{th:sci}. This raises the question whether for any complete intersection ring the class of Ulrich modules spans $[R]$ in $\euler R$. However, it is not even known  that such an $R$ has an Ulrich module.

If $R$ is a complete intersection of even dimension with an isolated singularity. It has been conjectured by Dao and Kurano~\cite[Conjecture~3.2(1)]{Dao/Kurano:2014} that $\euler R$ is one dimensional as a real vector space.  If this conjecture holds, then by Theorem~\ref{th:UlrichspanR}, the existence of a single (nonzero) Ulrich module $U$ implies that any short $R$-complex in $\perf R$ satisfies \eqref{eq:euler}.
\end{chunk}

\begin{chunk}
\label{ch:grop-topology}
We topologize the $\bbR$-vector space $\euler R$  by giving it the weakest topology for which every $\bbR$-linear map $\euler R \to \bbR$ is continuous, where $\bbR$ has the Euclidean topology.   This is the usual Euclidean topology when $\euler R$ is finite dimensional. We topologize  ${\grog R}_{\bbR}$ in the same way; with these topologies the quotient map  ${\grog R}_{\bbR}\to \euler R$ is continuous.

If $(\alpha_n)_{n\geqslant 0}$ is a convergent sequence in $\euler R$ (or in ${\grog R}_{\bbR}$) with limit $\alpha$, then
\[
\pair F{\alpha} = \lim_{n\to\infty} \pair F{\alpha_n} \qquad\text{for  $F$ in $\perf R$.}
\]
Our interest is in sequences with $\alpha_n = [M_n]/\nu_R(M_n)$, where the $M_n$ are nonzero finitely generated $R$-modules.
\end{chunk}

\begin{chunk}
\label{ch:frobenius}
Let $R$ be a local ring of positive characteristic $p$,  with perfect residue field, and such that the Frobenius endomorphism
\[
\vf\colon R\lra R
\]
is finite.  We write $\vf_*$ for restriction of scalars along $\vf$; the notation is in line with the one from algebraic geometry. Viewed as a functor on the category of  $R$-modules, $\vf_*$ has as left adjoint the base change functor
\[
\vf^*(M) \colonequals R^{\vf}\otimes_R M\,,
\]
where $R^{\vf}$ denotes  $R$ viewed as an $R$-$R$  bimodule with the canonical left action and  right action via $\vf$.

Since $\vf$ is finite $\vf_*$ induces a $\bbZ$-linear map on $\grog R$; we denote this also $\vf_*$. On the other hand, $\vf^*$ induces a $\bbZ$-linear map on ${\knot R}$, also denoted $\vf^*$. By the projection formula, for $\alpha\in \knot R$ and $\beta\in\grog R$ one has
\[
\pair {\vf^*(\alpha)}{\beta} = \pair {\alpha}{\vf_*(\beta)} \,.
\]
Given this adjunction it is clear that $\vf^*$ and $\vf_*$ induce maps
\[
\vf^* \colon \eulerk R \lra \eulerk R \qquad\text{and}\qquad \vf_* \colon \euler R\lra \euler R\,,
\]
and that these are adjoint to each other in the sense above.
\end{chunk}

\begin{chunk}
\label{ch:eigen-decomposition}
We remain in the framework of \ref{ch:frobenius}. For each integer $i\ge 0$ set
\[
{\grog R}_{\bbR}^{(i)} = \{\alpha\in{\grog R}_{\bbR}\mid \vf_*(\alpha) = p^i\alpha\}
\]
the eigenspace of $\vf_*$ with eigenvalue $p^i$.  The result below is implicit in \cite[\S2]{Kurano:1996}; see also \cite[Lemma~4]{Piepmeyer/Walker:2009}. The subset $\Lambda(R)\subseteq \spec R$ is as in \eqref{eq:g0}.

\begin{lemma}
\label{le:Feigen}
For $R$ as in \ref{ch:frobenius} there is an internal direct sum decomposition
\[
{\grog R}_{\bbR} = \bigoplus_{i=0}^d {\grog R}_{\bbR}^{(i)} \,.
\]
Moreover the assignment $[M]\mapsto (\length_{R_\fp}M_\fp)_{\fp\in \Lambda(R)}$ induces an isomorphism
\[
{\grog R}_{\bbR}^{(d)} \xra{\ \cong\ } \bbR^{\#\Lambda(R)}\,.
\]
\end{lemma}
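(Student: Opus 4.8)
The plan is to realize $\vf_*$ as a locally finite, semisimple operator on ${\grog R}_{\bbR}$ by exploiting the filtration of $\grog R$ by dimension of support, on whose $j$-th graded piece $\vf_*$ will turn out to act as multiplication by $p^j$; once this is in hand, both assertions drop out from elementary linear algebra. Concretely, for $-1\le j\le d$ let $G_j\subseteq\grog R$ be the subgroup generated by the classes $[M]$ with $\dim_RM\le j$, so that $0=G_{-1}\subseteq G_0\subseteq\cdots\subseteq G_d=\grog R$. Using the prime filtration of a finitely generated module \cite{Bruns/Herzog:1998}, one sees that $G_j$ is generated by the $[R/\fp]$ with $\dim(R/\fp)\le j$, and that for $M$ with $\dim_RM=j$ the class $[M]$ is congruent modulo $G_{j-1}$ to $\sum_\fp\length_{R_\fp}(M_\fp)[R/\fp]$, the sum being over those minimal primes $\fp$ of $\supp M$ with $\dim R/\fp=j$.

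First I would verify that $\vf_*$ respects the filtration and read off its action on the subquotients $\gr_j\colonequals(G_j)_{\bbR}/(G_{j-1})_{\bbR}$. Since $\vf^{-1}(\fp)=\fp$ for every prime $\fp$, restriction of scalars preserves supports, so $\dim_R\vf_*M=\dim_RM$ and $\vf_*$ maps $(G_j)_{\bbR}$ into itself. By the congruence above it then suffices to compute $\vf_*[R/\fp]$ for $\dim R/\fp=j$: localizing at $\fp$ identifies $(\vf_*(R/\fp))_\fp$ with the Frobenius pushforward of the field $\kappa(\fp)\colonequals R_\fp/\fp R_\fp$, a $\kappa(\fp)$-vector space of dimension $[\kappa(\fp):\kappa(\fp)^p]$, whence $[\vf_*(R/\fp)]\equiv[\kappa(\fp):\kappa(\fp)^p]\,[R/\fp]\pmod{G_{j-1}}$. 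The crucial input — and the one place where the hypotheses that the Frobenius of $R$ be finite and its residue field perfect are used — is the degree formula $[\kappa(\fp):\kappa(\fp)^p]=p^{\dim R/\fp}$, a consequence of Kunz's work on $F$-finite rings that is implicit in \cite[\S2]{Kurano:1996} (compare \cite[Lemma~4]{Piepmeyer/Walker:2009}). I expect this field-degree identity, in the stated generality, to be the main obstacle; everything else is formal. Granting it, $\vf_*$ acts on $\gr_j$ as the scalar $p^j$.

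Set $T\colonequals\vf_*$ on ${\grog R}_{\bbR}$. For $\alpha\in(G_j)_{\bbR}$ one has $(T-p^j)\alpha\in(G_{j-1})_{\bbR}$, so an easy induction shows $\prod_{i=0}^{j}(T-p^i)$ annihilates $(G_j)_{\bbR}$; with $j=d$ this says $T$ satisfies the polynomial $\prod_{i=0}^d(x-p^i)$, which has $d+1$ distinct roots because $p\ge2$. The Lagrange idempotents attached to this polynomial — obtained by evaluating the partition of unity $\sum_i\prod_{j\ne i}(x-p^j)/(p^i-p^j)=1$ at $T$ — are orthogonal idempotents summing to the identity with images $\ker(T-p^i)$, giving the internal direct sum ${\grog R}_{\bbR}=\bigoplus_{i=0}^d\ker(T-p^i)=\bigoplus_{i=0}^d{\grog R}_{\bbR}^{(i)}$; note this needs no finiteness hypothesis on ${\grog R}_{\bbR}$. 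Moreover, if $0\ne\alpha\in{\grog R}_{\bbR}^{(i)}$ and $m$ is minimal with $\alpha\in(G_m)_{\bbR}$, then the nonzero image of $\alpha$ in $\gr_m$ is scaled by $p^i$ (as $T\alpha=p^i\alpha$) and also by $p^m$, forcing $m=i$; hence ${\grog R}_{\bbR}^{(i)}\subseteq(G_i)_{\bbR}$. Combined with the fact that $(G_{d-1})_{\bbR}$, being annihilated by $\prod_{i=0}^{d-1}(T-p^i)$, lies in $\bigoplus_{i=0}^{d-1}{\grog R}_{\bbR}^{(i)}$, this gives $(G_{d-1})_{\bbR}=\bigoplus_{i=0}^{d-1}{\grog R}_{\bbR}^{(i)}$.

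For the last assertion, recall from the discussion preceding \eqref{eq:g0} that that map is surjective with kernel generated by the classes $[R/\fp]$ for $\dim R/\fp<d$, i.e.\ with kernel exactly $(G_{d-1})_{\bbR}$. Thus \eqref{eq:g0} induces an isomorphism ${\grog R}_{\bbR}/(G_{d-1})_{\bbR}\xra{\ \cong\ }\bbR^{\#\Lambda(R)}$, and by the previous paragraph the composite ${\grog R}_{\bbR}^{(d)}\into{\grog R}_{\bbR}\twoheadrightarrow{\grog R}_{\bbR}/(G_{d-1})_{\bbR}$ is an isomorphism as well. Unwinding the definitions shows this composite is the map $[M]\mapsto(\length_{R_\fp}M_\fp)_{\fp\in\Lambda(R)}$, as claimed.
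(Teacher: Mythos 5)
Your argument is correct. Note that the paper itself offers no proof of this lemma, only the citations to Kurano \cite{Kurano:1996} and Piepmeyer--Walker \cite{Piepmeyer/Walker:2009}; your write-up is an accurate reconstruction of the argument underlying those sources. You correctly isolate the one nontrivial input, namely Kunz's formula $[\kappa(\fp):\kappa(\fp)^p]=p^{\dim R/\fp}$. One small point worth making explicit: passing from the local statement that adjacent primes $\fp\subsetneq\fq$ satisfy $\log_p[\kappa(\fp):\kappa(\fp)^p]=\log_p[\kappa(\fq):\kappa(\fq)^p]+1$ to the global formula $\log_p[\kappa(\fp):\kappa(\fp)^p]=\dim R/\fp$ (using that the residue field of $R$ is perfect) requires $R$ to be catenary, and this is supplied by another theorem of Kunz, that $F$-finite noetherian rings are excellent. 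Everything else in the proof---the filtration by dimension of support, the computation $\vf_*[R/\fp]\equiv[\kappa(\fp):\kappa(\fp)^p]\,[R/\fp]$ modulo lower dimension, the Lagrange-idempotent decomposition, the identification ${\grog R}_{\bbR}^{(i)}\subseteq(G_i)_{\bbR}$, and the deduction of the second statement from the first---is carried out correctly and does not require any finite-dimensionality of ${\grog R}_{\bbR}$, as you observe.
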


We write $[M]_i$ for the image of $[M]$ in ${\grog R}_{\bbR}^{(i)}$. We will encounter modules $M$ that satisfy the following condition: There exists a \emph{nonzero} integer $m$ such that
\begin{equation}
\label{eq:rank}
\text{$\length_{R_\fp}(M_\fp) = m\cdot \length (R_\fp)$  for each $\fp\in\Lambda(R)$.}
\end{equation}
In this case $\dim M = \dim R$ and $e_d(M)= m\cdot e(R)$; see \cite[Corollary~4.7.8]{Bruns/Herzog:1998}.  Examples include modules $M$ of nonzero finite rank. When \eqref{eq:rank} holds one has
\begin{equation}
\label{eq:rank2}
[M]_d = \frac{e_d(M)}{e(R)} [R]_d\,.
\end{equation}
The result below will be used in the proof of Theorem~\ref{th:ma-convergence}.

\begin{lemma}
\label{le:frobenius-limit}
Let $R$ be as in \ref{ch:frobenius} and let $M$ be a finitely generated $R$-module. If $e_d(M)\ne 0$, then in ${\grog R}_{\bbR}$ one has
\[
\lim_{n\to\infty} \frac{\vf_*^n(M)}{e_d(\vf_*^n(M))} = \frac{[M]_d}{e_d(M)}\,.
\]
When in addition $M$ satisfies \eqref{eq:rank} the limit above equals $[R]_d/e(R)$.
\end{lemma}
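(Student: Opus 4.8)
The plan is to exploit the eigenspace decomposition ${\grog R}_{\bbR}=\bigoplus_{i=0}^d{\grog R}_{\bbR}^{(i)}$ of Lemma~\ref{le:Feigen}, together with the observation that the multiplicity is a linear functional on ${\grog R}_{\bbR}$ supported entirely on the top eigenspace.

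First I would check that $e_d(-)$ vanishes on ${\grog R}_{\bbR}^{(i)}$ for all $i<d$. By Corollary~\ref{co:ed-symbol}, the map $e_d(-)\colon{\grog R}_{\bbR}\to\bbR$ factors through the map \eqref{eq:g0}, and Lemma~\ref{le:Feigen} says that \eqref{eq:g0} restricts to an isomorphism ${\grog R}_{\bbR}^{(d)}\xra{\ \cong\ }\bbR^{\#\Lambda(R)}$. Since \eqref{eq:g0} is surjective and the sum $\bigoplus_i{\grog R}_{\bbR}^{(i)}$ is direct, the top summand already accounts for the whole target, so \eqref{eq:g0}, and hence $e_d(-)$, must kill $\bigoplus_{i<d}{\grog R}_{\bbR}^{(i)}$. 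Writing $[M]=\sum_{i=0}^d[M]_i$ with $[M]_i\in{\grog R}_{\bbR}^{(i)}$, this gives $e_d(M)=e_d([M]_d)$.

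Next comes the computation. Since $\vf_*$ multiplies ${\grog R}_{\bbR}^{(i)}$ by $p^i$ by definition, one has $\vf_*^n([M])=\sum_{i=0}^d p^{ni}[M]_i$, and by linearity of $e_d(-)$ together with the vanishing above,
\[
e_d(\vf_*^n(M))=\sum_{i=0}^d p^{ni}e_d([M]_i)=p^{nd}e_d(M)\,,
\]
which is nonzero for every $n$ by the hypothesis $e_d(M)\ne 0$; in particular the displayed ratio is well defined. Therefore
\[
\frac{\vf_*^n([M])}{e_d(\vf_*^n(M))}=\frac{1}{e_d(M)}\sum_{i=0}^d p^{n(i-d)}[M]_i\,.
\]
As $n\to\infty$ the scalar $p^{n(i-d)}$ tends to $0$ for $i<d$ and equals $1$ for $i=d$; since only the finitely many fixed classes $[M]_0,\dots,[M]_d$ occur, applying an arbitrary $\bbR$-linear functional and invoking the definition of the topology on ${\grog R}_{\bbR}$ from \ref{ch:grop-topology} shows the sequence converges to $[M]_d/e_d(M)$, as claimed. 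For the final assertion, if $M$ satisfies \eqref{eq:rank} then \eqref{eq:rank2} gives $[M]_d=(e_d(M)/e(R))\,[R]_d$, and dividing by $e_d(M)$ yields $[R]_d/e(R)$.

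The one step that deserves care is the vanishing of $e_d(-)$ on the non-top eigenspaces: the point is not that $e_d(-)$ is intrinsically ``degree $d$'', but rather that its factorization through \eqref{eq:g0} (Corollary~\ref{co:ed-symbol}), combined with the surjectivity of \eqref{eq:g0} and the fact that ${\grog R}_{\bbR}^{(d)}$ already maps isomorphically onto $\bbR^{\#\Lambda(R)}$ (Lemma~\ref{le:Feigen}), forces every other eigenspace into the kernel of \eqref{eq:g0}. Once this is in hand, the remainder is routine bookkeeping with the decomposition and a limit of scalars.
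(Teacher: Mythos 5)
Your overall strategy — decompose into $\vf_*$-eigenspaces via Lemma~\ref{le:Feigen}, compute $e_d(\vf_*^n M)=p^{nd}e_d(M)$, and push the limit through — is exactly what the paper does, and the final reduction via \eqref{eq:rank2} is also the same. But the step you flag as "the one that deserves care" is where the argument as written has a genuine gap.

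You claim that because \eqref{eq:g0} is surjective, the decomposition $\bigoplus_{i}{\grog R}_{\bbR}^{(i)}$ is direct, and the restriction to ${\grog R}_{\bbR}^{(d)}$ is an isomorphism onto $\bbR^{\#\Lambda(R)}$, the remaining summands are ``forced into the kernel.'' That inference is not valid: a surjective linear map $f\colon V_0\oplus V_1\to W$ with $f|_{V_1}$ an isomorphism need not annihilate $V_0$ (take $V_0=V_1=W=\bbR$ and $f(v_0,v_1)=v_0+v_1$). Nothing in ``surjective $+$ direct sum $+$ top piece iso'' controls where the other summands go. What actually makes this work is that \eqref{eq:g0} \emph{intertwines} $\vf_*$ with scalar multiplication by $p^d$: with $k$ perfect and $\fp\in\Lambda(R)$ one has $\length_{R_\fp}\bigl((\vf_*M)_\fp\bigr)=p^d\,\length_{R_\fp}(M_\fp)$, so $\eqref{eq:g0}\circ\vf_*=p^d\cdot\eqref{eq:g0}$. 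Then for $\alpha\in{\grog R}_{\bbR}^{(i)}$ one gets $p^i\eqref{eq:g0}(\alpha)=\eqref{eq:g0}(\vf_*\alpha)=p^d\eqref{eq:g0}(\alpha)$, which forces $\eqref{eq:g0}(\alpha)=0$ when $i<d$. Combined with Corollary~\ref{co:ed-symbol} (which says $e_d$ factors through \eqref{eq:g0}), this gives the vanishing of $e_d$ on the lower eigenspaces, and the rest of your computation then goes through unchanged. The paper sidesteps the issue by simply asserting the factorization through ${\grog R}_{\bbR}^{(d)}$, citing the proof of Corollary~\ref{co:ed-symbol} and, implicitly, the Kurano/Piepmeyer--Walker references behind Lemma~\ref{le:Feigen}; your attempt to make this self-contained is a good instinct, but the specific abstract-linear-algebra shortcut does not hold.
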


\begin{proof}
One has $e_d(\vf_*^n(M))=p^{nd} e_d(M)$, since $e_d(-)$ factors through ${\grog R}_{\bbR}^{(d)}$; see the proof of Corollary~\ref{co:ed-symbol}. This fact and Lemma~\ref{le:Feigen} yield the first equality below:
\[
\lim_{n\to\infty} \frac{\vf_*^n(M)}{e_d(\vf_*^n(M))} = \lim_{n\to\infty}  \frac{[M]_0 +\cdots + p^{nd} [M]_d}{p^{nd} e_d(M)} = \frac{[M]_d}{e_d(M)}\,.
\]
For the second one, note that for any $\alpha$ in ${\grog R}_{\bbR}$ and sequence $(a_n)_{n\geqslant 0}$ of nonzero integers converging to $\infty$, the sequence $(\alpha/a_n)_{n\geqslant 0}$ converges to $0$ in ${\grog R}_{\bbR}$.

The last part of the statement is now immediate from \eqref{eq:rank2}.
\end{proof}
\end{chunk}

\begin{chunk}
\label{ch:euler-decomposition}
As noted earlier, the endomorphism $\vf_*$ on ${\grog R}_{\bbR}$ descends to $\euler R$. Thus the decomposition  in Lemma~\ref{le:Feigen} induces an analogous decomposition on $\euler R$. It follows from Corollary~\ref{co:representation} that the isomorphism in \emph{op.~cit.} induces isomorphisms
\[
{\grog R}_{\bbR}^{(d)} \xra{\ \cong\ } {\euler R}^{(d)} \xra{\ \cong \ } \bbR^{\#\Lambda(R)}\,.
\]
Another remark: While we have introduced these decompositions only in the context of \ref{ch:frobenius}, they exist in great generality, covering also rings not necessarily of positive characteristic, using the Riemann-Roch isomorphism; see~\cite{Fulton:1998}.
\end{chunk}

\begin{chunk}
\label{ch:Dutta}
Let $R$ be a local ring of positive characteristic $p$. The \emph{Dutta multiplicity} of a complex $F$ in $\perf R$ is
\[
\chi_{\infty}(F) \colonequals \lim_{n\to \infty} \frac{\chi((\vf^*)^n(F))}{p^{nd}}\,.
\]
When $R\to S$ is  flat map with $\fm S$ the maximal ideal of $S$, then it is easy to verify that $(\vf^*)^n(S\otimes_RF) \cong S\otimes_R (\vf^*)^n(F)$, and it follows that $\chi_\infty(F)=\chi_\infty(S\otimes_RF)$. Given this remark and \ref{ch:gonflament}, we can assume that $R$ is complete and $k$ is  algebraically closed when computing Dutta multiplicities. This puts us in the context of \ref{ch:frobenius},  and in this case the following expression for the Dutta multiplicity is well-known~\cite[\S5]{Roberts:1989}, and can be deduced easily from Lemma~\ref{le:frobenius-limit}.
\begin{equation}
\label{eq:duttam}
\chi_{\infty}(F)  = \pair F{[R]_d}\,.
\end{equation}
Since $\chi(F) = \pair F{[R]}$ the Dutta multiplicity is the Euler characteristic of $F$ whenever $[R]=[R]_d$ in $\euler R$. This is the case, for example, when $R$ is a complete intersection ring, and more generally when $R$ is numerically Roberts; see~\cite[\S6]{Kurano:2004}.
\end{chunk}

\section{Lim Cohen-Macaulay and lim Ulrich sequences}
\label{se:lus}
This section concerns the notion of a lim Ulrich sequence of modules introduced in \cite{Ma:2020}. Building on this, we introduce a notion of lim Ulrich point in the Grothendieck group modulo numerical equivalence. Our interest in such points is explained by Theorem~\ref{th:lus} that derives bounds on intersection multiplicities and Betti numbers from the existence of such points.

Throughout  $R$ will be a local ring and we set $d\colonequals \dim R$.

\begin{chunk}
\label{ch:lcm}
We say that a sequence $(U_n)_{n\geqslant 0}$ of finitely generated $R$-modules is a \emph{lim Cohen-Macaulay sequence} if each $U_n$ is nonzero, and for any short complex $F$ in $\perf R$ one has
\begin{equation}
\label{eq:lcm}
\lim_{n\to \infty} \frac {\length_R(\HH i{F\otimes_R U_n})}{\nu_R(U_n)}=0 \qquad \text{for $i\ge 1$.}
\end{equation}
The definition is from  \cites{Bhatt/Hochster/Ma:2100,Hochster:2017, Ma:2020}, with the caveat that in these sources the complexes $F$ are restricted to be Koszul complexes on systems of parameters. These are equivalent notions. In fact, in checking whether a given sequence of modules is lim Cohen-Macaulay it suffices to test that condition \eqref{eq:lcm} holds when $F$ is a Koszul complex on a \emph{single} system of parameters. This result is due to Bhatt, Hochster, and Ma~\cites{Bhatt/Hochster/Ma:2100,Hochster:2017}, and appears as Lemma~\ref{le:lcm} below.
\end{chunk}

The  observation below will be useful in the proof of Lemma~\ref{le:lcm} and in many of the later arguments.

\begin{lemma}
\label{le:lim-zero}
Let $(W(n))_{n\geqslant 0}$ be a sequence of $R$-complexes, $(v_n)_{n\geqslant 0}$ a sequence of positive integers, and $s$ an integer such that  \[
\lim_{n\to\infty} \frac{\length_R \HH i{W(n)}}{v_n}=0 \qquad\text{for all $i\ne s$.}
\]
Then for any finite free complex $P$ and integer $i$ the following equality holds:
\[
\limsup_{n\to \infty}  \frac{\length_R \HH i{P\otimes_R W(n)}}{v_n}= \limsup_{n\to \infty}  \frac{\length_R \HH {i-s} {P\otimes_R \HH s{W(n)}}}{v_n}\,.
\]
The corresponding equality involving $\liminf$ also holds.
\end{lemma}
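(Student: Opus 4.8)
The plan is to argue by induction on the number of nonzero homology modules of $W(n)$ — or, more cleanly, to reduce to the case where $W(n)$ is replaced by a complex quasi-isomorphic to it but concentrated in fewer degrees, at the cost of an error term that vanishes after dividing by $v_n$. The key point is that tensoring with a \emph{finite free} complex $P$ is exact on the level of modules, so a quasi-isomorphism $W(n)\simeq W'(n)$ induces a quasi-isomorphism $P\otimes_R W(n)\simeq P\otimes_R W'(n)$, and thus the lengths of $\HH i{P\otimes_R W(n)}$ are unchanged. So the heart of the matter is to show that $W(n)$ can be ``replaced'' by its homology in degree $s$, up to controlled error.

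First I would set up the standard soft truncation machinery. Let $W'(n) \colonequals \tau_{\ge s}\tau_{\le s}W(n)$, which is quasi-isomorphic to the complex $\HH s{W(n)}$ placed in homological degree $s$. Rather than try to compare $W(n)$ and $W'(n)$ directly, I would instead build a bounded filtration (or a finite sequence of mapping-cone triangles) relating $W(n)$ to $\HH s{W(n)}[s]$ whose ``defect'' complexes have homology supported on the degrees $i\ne s$. Concretely: there are distinguished triangles assembling $W(n)$ from its homology modules $\HH j{W(n)}[j]$, one for each $j$; tensoring each such triangle with $P$ and taking the long exact sequence in homology, the contribution of $\HH j{W(n)}[j]$ for $j\ne s$ to $\length_R \HH i{P\otimes_R W(n)}$ is bounded above (and below) by $\sum_{a}\beta^R_a(P)\cdot \length_R\HH j{W(n)}$ up to finitely many terms, since $P$ is finite free. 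Dividing by $v_n$ and using the hypothesis that $\length_R\HH j{W(n)}/v_n\to 0$ for $j\ne s$, every such contribution goes to $0$. Hence $\limsup_n \length_R\HH i{P\otimes_R W(n)}/v_n$ is unaffected by discarding all homology of $W(n)$ except in degree $s$, which gives
\[
\limsup_{n\to\infty}\frac{\length_R\HH i{P\otimes_R W(n)}}{v_n}
= \limsup_{n\to\infty}\frac{\length_R\HH i{P\otimes_R(\HH s{W(n)}[s])}}{v_n}
= \limsup_{n\to\infty}\frac{\length_R\HH{i-s}{P\otimes_R\HH s{W(n)}}}{v_n}\,,
\]
the last equality just being the degree shift. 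The same argument with $\liminf$ in place of $\limsup$ (using the lower bounds in the long exact sequences) gives the companion statement.

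I expect the main obstacle to be bookkeeping rather than anything conceptual: one must make the ``assembling $W(n)$ from its homology'' step precise over a general local ring $R$ without a flat structure, i.e. without invoking formality or a derived-category splitting that need not exist. The clean way around this is to avoid filtrations of $W(n)$ itself and instead work with the hyper-(co)homology spectral sequence, or equivalently the two brutal/soft truncation triangles $\tau_{\le s-1}W(n)\to W(n)\to \tau_{\ge s}W(n)$ and $\HH s{W(n)}[s]\to \tau_{\ge s}W(n)\to \tau_{\ge s+1}W(n)$, apply $P\otimes_R-$, and read off the long exact homology sequences; the terms $\HH i{P\otimes_R \tau_{\le s-1}W(n)}$ and $\HH i{P\otimes_R \tau_{\ge s+1}W(n)}$ are then bounded, after dividing by $v_n$, by finite sums of $\length_R\HH j{W(n)}/v_n$ with $j\ne s$, via an easy induction on the number of nonzero homology modules of the truncated complex. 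Once that bound is in hand, the estimate $|\length_R\HH i{A\oplus B}-\length_R\HH i A|\le \length_R\HH i B$ applied along the long exact sequences finishes the argument. The only subtlety worth flagging is that all the complexes in sight have finitely generated homology of finite length in each degree (so every length that appears is finite), which is immediate since $P$ is finite free and each $\HH j{W(n)}$ need only be assumed to have finite length in the degrees $j\ne s$ — and in the degree $s$ the claimed identity makes sense as an equality in $[0,\infty]$, so no extra finiteness hypothesis is needed there.
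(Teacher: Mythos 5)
Your proposal is correct and, once refined in your second paragraph, is essentially the paper's own argument: both use good truncations to sandwich $\HH s{W(n)}$, tensor the resulting short exact sequences of complexes with the finite free complex $P$, and use the long exact homology sequences together with the hypothesis to show that the contributions of homology in degrees $j\ne s$ vanish asymptotically (the paper first isolates this as a "special case" proved by induction on the number of nonzero terms of $P$, then applies it twice). Your opening idea of assembling $W(n)$ from all its homology modules via a full Postnikov tower is a mild detour which you correctly flag and then replace with the two-triangle truncation argument that the paper uses.
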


In what follows it will be expedient to use the shorthand
\begin{equation}
\label{eq:h-notation}
h_i(X)\colonequals  \length_R \HH iX
\end{equation}
for any $R$-complex $X$ and integer $i$.

\begin{proof}[Proof of Lemma~\ref{le:lim-zero}]
The argument is a reduction to the following special case: If the limit of the sequence $(h_i(W(n))/v_n)_{n\geqslant 0}$ is $0$ for each $i$, then
\[
\lim_{n\to \infty} \frac{h_i(P\otimes_RW(n))}{v_n}= 0 \quad \text{for all $i$.}
\]
This can be verified by a simple induction on the number of nonzero terms in $P$.

Set $W'(n)\colonequals \tau_{\geqslant s} W(n)$, the good truncation of $W(n)$ below $s$; see \cite[1.2.7]{Weibel:2013}. Consider the induced  exact sequence of complexes
\begin{equation}
\label{eq:lim-zero}
0\lra  W'(n) \lra W(n) \xra{\ \pi\ } W''(n) \lra 0\,.
\end{equation}
By construction $\HH i{\pi}$ is an isomorphism in degrees $i\le s-1$ and $\HH i{W''(n)}=0$ for $i\ge s$. Thus hypotheses yields
\[
\lim_{n\to \infty} \frac{h_i(W''(n))}{v_n} = 0 \quad\text{for each $i$.}
\]
Then the already establish part of the result implies that
\[
\lim_{n\to\infty} \frac{h_i(P\otimes_RW''(n))}{v_n}=0 \quad\text{for each $i$.}
\]
Tensoring \eqref{eq:lim-zero} with $P$ and taking homology yields for each $i$ an exact sequence
\[
\HH {i+1}{P\otimes_R W''(n)} \to \HH {i}{P\otimes_R W'(n)}  \to \HH {i}{P\otimes_R W(n)} \to \HH {i}{P\otimes_R W''(n)}
\]
It follows from these computations that
\begin{equation}
\label{eq:lim-zero2}
\limsup_{n\to\infty}  \frac{h_i(P\otimes_RW'(n))}{v_n}=  \limsup_{n\to\infty} \frac{h_i(P\otimes_RW(n))}{v_n}
\end{equation}
for each integer $i$.  Observe also that by construction of \eqref{eq:lim-zero} one has
\[
\HH i{W'(n)}\cong
\begin{cases}
0 & \text{for $i<s$}\\
\HH i{W(n)} &\text{for $i\ge s$.}
\end{cases}
\]
Given this information and the equality \eqref{eq:lim-zero2} we can replace $W(n)$ by $W'(n)$ and  assume the $W(n)_i=0$ for $i<s$ and all $n$.  Then consider the exact sequence
\[
0\lra V(n) \lra W(n) \lra \shift^s \HH s{W(n)} \lra 0
\]
where the map on the right is the canonical surjection and $V(n)$ is its kernel.   Thus
\[
\HH i{V(n)}\cong
\begin{cases}
0 & \text{for $i=s$}\\
\HH i{W(n)} &\text{for $i\ne s$.}
\end{cases}
\]
The hypotheses  implies that the limit of the sequence $(h_i(V(n))/v_n)_{n\geqslant 0}$ is $0$ for each $i$. Then the already establish part of the result implies that
\[
\lim_{n\to\infty} \frac{h_i(P\otimes_RV(n))}{v_n}=0 \quad\text{for each $i$.}
\]
Given this one can argue as before to deduce the first equality below.
\begin{align*}
\limsup_{n\to\infty}  \frac{h_i(P\otimes_RW(n))}{v_n}
 &=  \limsup_{n\to\infty} \frac{h_i(P\otimes_R \shift^s \HH s{W(n)})}{v_n} \\
&=  \limsup_{n\to\infty} \frac{h_{i-s}(P\otimes_R \HH s{W(n)})}{v_n}
\end{align*}
This is the desired equality involving $\limsup$; the one for $\liminf$ can be verified in exactly the same way.
\end{proof}

Here is the promised result about lim Cohen-Macaulay sequences, from \cite{Bhatt/Hochster/Ma:2100}. We give a proof for the benefit of the first and third authors.

\begin{lemma}
\label{le:lcm}
A sequence $(U_n)_{n\geqslant 0}$ of finitely generated $R$-modules is lim Cohen-Macaulay if \eqref{eq:lcm} holds for the Koszul complex on a single system of parameters for $R$.
\end{lemma}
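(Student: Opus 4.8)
The plan is to reduce the general case of condition \eqref{eq:lcm} --- which quantifies over all short complexes $F$ in $\perf R$ --- to the case of Koszul complexes on a single chosen system of parameters, using Lemma~\ref{le:lim-zero} as the main technical engine. Let $\bsx = x_1,\dots,x_d$ be the fixed system of parameters for which we are given that \eqref{eq:lcm} holds for $K \colonequals \kos{\bsx}{R}$, and let $F$ be an arbitrary short complex in $\perf R$, say of the form $0 \to F_d \to \cdots \to F_0 \to 0$. The goal is to show $\lim_{n\to\infty} h_i(F\otimes_R U_n)/\nu_R(U_n) = 0$ for all $i\ge 1$.

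First I would observe that the hypothesis says precisely that the sequence of complexes $W(n) \colonequals K\otimes_R U_n$ has $h_i(W(n))/\nu_R(U_n) \to 0$ for all $i \ne 0$, i.e.\ it is "concentrated in degree $s=0$ asymptotically". The key point is then to extract information about $U_n$ itself from this. Since $K$ is depth-sensitive (see \ref{ch:short}), the natural candidate for the "asymptotic homology module" of $W(n)$ in degree $0$ is $\HH 0{K\otimes_R U_n} = U_n/\bsx U_n$, and $\nu_R(U_n/\bsx U_n) = \nu_R(U_n)$. The plan is to apply Lemma~\ref{le:lim-zero} with $W(n) = K\otimes_R U_n$, $v_n = \nu_R(U_n)$, $s = 0$, and with $P$ a well-chosen finite free complex. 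Taking $P = F$ (viewed as a finite free complex) gives
\[
\limsup_{n\to\infty} \frac{h_i(F\otimes_R K\otimes_R U_n)}{\nu_R(U_n)} = \limsup_{n\to\infty} \frac{h_i(F\otimes_R (U_n/\bsx U_n))}{\nu_R(U_n)}.
\]
So it suffices to control both sides. For the right-hand side, $U_n/\bsx U_n$ is an $R/(\bsx)$-module generated by $\nu_R(U_n)$ elements, $F\otimes_R (U_n/\bsx U_n)$ is a complex of free $R/(\bsx)$-modules, and each homology module has length at most (rank of the term) $\times \ell_R(U_n/\bsx U_n)$; but more sharply, since $R/(\bsx)$ is artinian of length $e'\colonequals \ell_R(R/(\bsx))$ and $U_n/\bsx U_n$ is a quotient of $(R/(\bsx))^{\nu_R(U_n)}$, one gets $h_i(F\otimes_R (U_n/\bsx U_n)) \le (\operatorname{rank} F_i)\cdot e' \cdot \nu_R(U_n)$ --- wait, that bound is $O(\nu_R(U_n))$, not $o(\nu_R(U_n))$, so this crude estimate is not enough by itself. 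The actual argument must instead run the Lemma~\ref{le:lim-zero} reduction in the \emph{other} direction: start from $F\otimes_R U_n$ and tensor with $K$.

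Here is the corrected core of the plan. Apply Lemma~\ref{le:lim-zero} with $W(n) = F\otimes_R U_n$ --- but we don't yet know its homology is asymptotically concentrated, so that won't work directly either. The right move, which is what the authors presumably do, is: consider $W(n) = K \otimes_R U_n$, which \emph{is} asymptotically concentrated in degree $0$ by hypothesis, with $\HH 0 = U_n/\bsx U_n$; then Lemma~\ref{le:lim-zero} with $P$ an arbitrary finite free complex gives $\limsup h_i(P\otimes_R K\otimes_R U_n)/\nu_R(U_n) = \limsup h_i(P\otimes_R (U_n/\bsx U_n))/\nu_R(U_n)$. Now take $P = F$; since $F$ is a short complex (length $d$) and $K$ has length $d$, the complex $F\otimes_R K$ has length $2d$. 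The trick is to compare $F\otimes_R K \otimes_R U_n$ with $F\otimes_R U_n$: because $K = \kos{\bsx}{R}$ and each $x_j$ annihilates a high power along the filtration, one can build a spectral sequence / iterated cone argument degenerating $K$ to $R$ up to finite-length error controlled by lengths that are $o(\nu_R(U_n))$. Concretely, $K$ has a filtration with associated graded $\bigoplus_j \binom{d}{j}\shift^j R$; tensoring the distinguished triangles $\shift^{j-1}R \to K^{(j)} \to K^{(j-1)} \to$ with $F\otimes_R U_n$ and using the hypothesis to kill the "non-$R$ part" asymptotically, one gets
\[
\limsup_{n\to\infty}\frac{h_i(F\otimes_R U_n)}{\nu_R(U_n)} = \limsup_{n\to\infty}\frac{h_i(F\otimes_R K\otimes_R U_n)}{\nu_R(U_n)},
\]
and combining with the displayed equality above (with $P=F$) yields $\limsup h_i(F\otimes_R U_n)/\nu_R(U_n) = \limsup h_i(F\otimes_R (U_n/\bsx U_n))/\nu_R(U_n)$. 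Finally, for $i \ge 1$ the right-hand side vanishes: $U_n/\bsx U_n$ is a module over the artinian ring $R/(\bsx)$, and $F$ restricted to $R/(\bsx)$ is still a short complex supported at the maximal ideal; one applies the \emph{already-established-in-degree-$0$-only} hypothesis... no --- rather, one applies the same $K$-degeneration trick once more over $R/(\bsx)$, or more simply observes that $F\otimes_R(U_n/\bsx U_n) \simeq (F\otimes_R K)\otimes_R U_n$ shifted, so that $h_i$ of it for $i\ge1$ is again governed by the hypothesis. I'll present it as: both the degree-$0$ homology of $K\otimes_R U_n$ and the whole complex $F\otimes_R U_n$ get related, via two applications of Lemma~\ref{le:lim-zero}, to $F\otimes_R K\otimes_R U_n$, and the hypothesis forces everything in positive degree to be $o(\nu_R(U_n))$.

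\medskip

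\emph{Anticipated main obstacle.} The delicate point is the bookkeeping in passing between $F\otimes_R U_n$, $K\otimes_R U_n$, and $F\otimes_R K\otimes_R U_n$: one must verify that the "error terms" introduced by filtering $K$ (or $F$) by its free summands are genuinely $o(\nu_R(U_n))$, which requires that at \emph{every} stage of the filtration the relevant homology lengths are controlled --- and this is exactly where Lemma~\ref{le:lim-zero} is indispensable, since it lets one move a finite free complex $P$ across an asymptotically-concentrated sequence without losing the $o(v_n)$ estimate. A secondary subtlety is that $U_n$ need not be Cohen-Macaulay, so $K\otimes_R U_n$ genuinely has higher homology; the whole content of the hypothesis is that this higher homology is asymptotically negligible, and one must be careful to invoke it (rather than any Cohen-Macaulayness) at each step. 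Once the reduction machinery is set up, the conclusion for $i\ge 1$ is immediate; the work is entirely in the reduction.
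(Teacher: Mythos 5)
Your opening moves are correct, and you identify the key technical tool (Lemma~\ref{le:lim-zero} applied with $W(n)=K\otimes_R U_n$, $v_n=\nu_R(U_n)$, $s=0$, $P=F$). But the middle of your argument --- the comparison of $F\otimes_R U_n$ with $F\otimes_R K\otimes_R U_n$ --- has a genuine gap, and it is the crux of the proof.

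You propose to filter $K$ by its terms, getting associated graded $\bigoplus_j \binom dj \shift^j R$, tensor with $F\otimes_R U_n$, and ``use the hypothesis to kill the non-$R$ part asymptotically,'' arriving at $\limsup h_i(F\otimes_R U_n)/\nu_R(U_n)=\limsup h_i(F\otimes_R K\otimes_R U_n)/\nu_R(U_n)$. This does not work. The spectral sequence from the stupid filtration of $K$ has $E^1_{p,q}=\binom dp\HH q{F\otimes_R U_n}$, converging to $\HH{p+q}{K\otimes_R F\otimes_R U_n}$. The hypothesis tells you the abutment in degrees $\geqslant d+1$ is $o(\nu_R(U_n))$, but that only gives you information about the total of what survives to $E^\infty$; it does not say anything about the individual groups $\HH q{F\otimes_R U_n}$ for $q\geqslant 1$ --- which is exactly what you are trying to bound. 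The differentials could kill large contributions before $E^\infty$, so no estimate comes out. Indeed the displayed equality you assert is simply false in general: $F\otimes_R K$ has twice the length of $F$, and there is no reason for their Euler-characteristic-shifted homologies to match termwise.

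What the paper does instead is replace this filtration argument with a splitting argument. It first shows (via the cone triangle $K(r^a)\to K(r^{a+1})\to K(r)$) that the hypothesis passes from the Koszul complex on $\bsr$ to the Koszul complex on $r_1^{a_1},\dots,r_d^{a_d}$ for any $a_i\geqslant 1$. This step is entirely absent from your proposal and is essential for what follows. It then observes that for a given short complex $F$, the kernel of $R\to\hom_{\mathcal D}(F,F)$ is $\fm$-primary, so one can choose the system of parameters $\bsr$ (a power of the original one) \emph{inside} this annihilator. With that choice, the canonical map $K\otimes_R F\to\shift^d F$ admits a section in the derived category (proved by induction on $d$ using the same cone triangle), and hence $\HH j{F\otimes_R U_n}$ is a direct summand of $\HH{j+d}{F\otimes_R K\otimes_R U_n}$. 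Now your Lemma~\ref{le:lim-zero} step finishes: for $j\geqslant 1$ one has $j+d\geqslant d+1>$ length of $F$, so $\limsup h_{j+d}(F\otimes_R K\otimes_R U_n)/\nu_R(U_n)=\limsup h_{j+d}(F\otimes_R U_n/\bsr U_n)/\nu_R(U_n)=0$, and the direct-summand containment transfers this to $h_j(F\otimes_R U_n)$. The derived-category splitting, not a filtration of $K$, is the missing idea.
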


\begin{proof}
Suppose \eqref{eq:lcm} holds for $\kos{\bsr}R$, the Koszul complex on a system of parameters $\bsr\colonequals r_1,\dots,r_d$ for $R$. We first verify that it also holds $\kos{r_1^{a_1},\dots,r_d^{a_d}}R$ for any integers $a_i\ge 1$. To this end it suffices to verify that if \eqref{eq:lcm} holds for $\kos{r_1^a, \bsr_{\geqslant 2}}R$ for some $a\ge 1$, then it also holds for $\kos{r_1^{a+1},\bsr_{\geqslant 2}}R$.

Let $\cat D$ denote the derived category of $R$. The composition of maps $R\xra{r_1^a} R\xra{r_1}R$ induces an exact triangle
\[
\kos{r_1^a}R \lra \kos {r_1^{a+1}}R \lra \kos {r_1}R
\]
in $\cat D$,  and hence  an exact triangle
\[
\kos{r_1^a,\bsr_{\geqslant 2}}{U_n} \lra \kos{r_1^{a+1},\bsr_{\geqslant 2}}{U_n} \lra \kos{\bsr}{U_n}\,.
\]
Thus for each integer $i$ there is an inequality
\[
\length_R \HH i{r_1^{a+1},\bsr_{\geqslant 2};U_n} \le \length_R \HH i{r_1^{a},\bsr_{\geqslant 2};U_n} + \length_R \HH i{\bsr;U_n}\,.
\]
The desired result follows.

Let  $F$ be an $R$-complex with $\length_R \hh F$ finite. In particular the kernel of the natural map $R\to \hom_{\cat D}(F,F)$ is primary to the maximal ideal of $R$. By the discussion in the previous paragraph one can find a system of parameters $\bsr$ in this kernel with the property that \eqref{eq:lcm} holds for  $K\colonequals K(\bsr)$.

Since $\bsr$ annihilates $\hom_{\cat D}(F,F)$ the natural map $K\otimes F\to \shift^d F$ of $R$-complexes has a section in $\cat D$. This fact does not require  $\bsr$ to be a system of parameters, only that it annihilates $\hom_{\cat D}(F,F)$, and  can be verified by an induction on the length of the sequence $\bsr$. The case of length one is immediate from the exact triangle
\[
F\xra{\ r_1\ } F \lra \kos {r_1}F \lra \shift F
\]
for since the map on the left is zero in $\cat D$, the triangle above splits, that is to say, the map on the right has a section. Replacing $F$ by $\kos {r_1} F$ and using  induction yields the desired statement. In this step one has to use the fact that $\bsr$ annihilates $\hom_{\cat D}(\kos {r_1}F,\kos {r_1}F)$; see \cite[\S3]{Apassov:1999}, or \cite[1.5.3]{Avramov/Iyengar/Miller:2006}.

From the preceding discussion we deduce that $\HH j{F\otimes_R U_n}$ is a direct summand of $\HH {j+d}{F\otimes_R K\otimes_R U_n}$ for each integer $i$. It thus suffices to verify that for any short complex $F$ in $\perf R$   and  $i\ge d+1$ one has
\[
\lim_{n\to \infty} \frac{h_i(F\otimes_R K\otimes_R U_n)}{\nu_R(U_n)} =0 \,.
\]
Here we are using the shorthand introduced in \eqref{eq:h-notation}. As $K$ satisfies \eqref{eq:lcm}, Lemma~\ref{le:lim-zero} applied with $W(n) = K\otimes_R U_n$   and $P=F$ yields
\[
\limsup_{n\to \infty} \frac{h_i(F\otimes_R K\otimes_R U_n)}{\nu_R(U_n)}
	= \limsup_{n\to \infty} \frac{h_i(F\otimes_R \HH 0{K\otimes_R U_n})}{\nu_R(U_n)}
\]
for each $i$. It remains to note that when $i\ge d+1$ the terms of the limit on the right hand side are all zero since $F$ is a short complex.
\end{proof}

Here is a simple consequence of the preceding result. The proof is straightforward so it is omitted;  see~\cite[Lemma 2.3]{Ma:2020}.

\begin{corollary}
\label{co:lcm-basechange}
If $(U_n)_{n\geqslant 0}$ be a lim Cohen-Macaulay sequence of $R$-modules and $R\to S$ is an  flat local map with artinian closed fiber, then the sequence of $S$-modules $(S\otimes_RU_n)_{n\geqslant 0}$ is lim Cohen-Macaulay. \qed
\end{corollary}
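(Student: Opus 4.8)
The plan is to reduce, via Lemma~\ref{le:lcm}, to checking condition~\eqref{eq:lcm} over $S$ for a single well-chosen system of parameters, and then to transport the lim Cohen-Macaulay property of $(U_n)_{n\geqslant 0}$ across the flat base change by a length computation.

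Write $\vf\colon(R,\fm)\to(S,\fn)$ for the map. Since $\vf$ is flat and local and the closed fibre $S/\fm S$ is artinian, one has $\dim S=\dim R=d$, and for every finite length $R$-module $N$ there is an equality
\[
\length_S(S\otimes_RN)=\length_R(N)\cdot\length_S(S/\fm S)\,,
\]
which follows by induction on $\length_R(N)$ using a filtration of $N$ by copies of $k$ together with the flatness of $S$. Hence, if $\bsr\colonequals r_1,\dots,r_d$ is a system of parameters for $R$, its image $\bss$ in $S$ is a system of parameters for $S$, because $S/(\bss)S\cong S\otimes_R(R/(\bsr))$ has finite length. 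By Lemma~\ref{le:lcm}, applied with $S$ in place of $R$, it therefore suffices to verify \eqref{eq:lcm} over $S$ for the single Koszul complex $\kos{\bss}{S}$.

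For this I would invoke the base change identity $\kos{\bss}{S\otimes_RU_n}\cong\kos{\bsr}{U_n}\otimes_RS$ of complexes. As $S$ is $R$-flat this gives $\HH i{\bss;S\otimes_RU_n}\cong\HH i{\bsr;U_n}\otimes_RS$; each $\HH i{\bsr;U_n}$ has finite length because $\bsr$ is a system of parameters, so the displayed formula yields $\length_S\HH i{\bss;S\otimes_RU_n}=\length_S(S/\fm S)\cdot\length_R\HH i{\bsr;U_n}$. Combining this with $\nu_S(S\otimes_RU_n)=\nu_R(U_n)$, which holds since $(S\otimes_RU_n)/\fn(S\otimes_RU_n)\cong(U_n/\fm U_n)\otimes_k(S/\fn)$, one obtains
\[
\frac{\length_S\HH i{\bss;S\otimes_RU_n}}{\nu_S(S\otimes_RU_n)}=\length_S(S/\fm S)\cdot\frac{\length_R\HH i{\bsr;U_n}}{\nu_R(U_n)}\,.
\]

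Finally, $\kos{\bsr}{R}$ is a short complex in $\perf R$, so the assumption that $(U_n)_{n\geqslant 0}$ is lim Cohen-Macaulay forces the right-hand side above to tend to $0$ as $n\to\infty$ for $i\geqslant1$, since $\length_S(S/\fm S)$ is a constant independent of $n$. That is exactly \eqref{eq:lcm} for $\kos{\bss}{S}$, and Lemma~\ref{le:lcm} then completes the argument. I do not anticipate a genuine obstacle; the only steps needing care are establishing the length formula across the flat extension and the bookkeeping that identifies $\nu$ and the Koszul homology modules after base change.
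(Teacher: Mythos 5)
Your argument is correct and is exactly the route the paper indicates (the paper omits the proof, calling it ``a simple consequence of the preceding result,'' i.e.\ Lemma~\ref{le:lcm}, and pointing to \cite[Lemma 2.3]{Ma:2020}): reduce via Lemma~\ref{le:lcm} to a single Koszul complex, use the base-change isomorphism $\kos{\bss}{S\otimes_RU_n}\cong S\otimes_R\kos{\bsr}{U_n}$ together with flatness, and track lengths and minimal generators across the extension. The length and $\nu$ computations are stated correctly (in particular your identification $\nu_S(S\otimes_RU_n)=\nu_R(U_n)$ uses only that $\vf$ is local, not that $\fm S=\fn$, which is the right level of generality here since the hypothesis is merely that the closed fibre is artinian).
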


\begin{lemma}
\label{eq:lcm-inequality}
Let $(U_n)_{n\geqslant 0}$ be a lim Cohen-Macaulay sequence. One has inequality
\[
\liminf_{n\to \infty} \frac{e_d(U_n)}{\nu_R(U_n)} \ge 1\,.
\]
\end{lemma}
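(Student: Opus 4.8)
The plan is to recognize $e_d(U_n)$ as the Euler characteristic of a \emph{fixed} short complex tensored with $U_n$, and then feed this into the defining condition of a lim Cohen-Macaulay sequence, isolating the degree-zero term.

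First I would reduce to the case where the residue field $k$ is infinite. By \ref{ch:gonflament} there is a flat local extension $R\to S$ with $\fm S$ the maximal ideal of $S$ and infinite residue field; its closed fibre is a field, hence artinian, so Corollary~\ref{co:lcm-basechange} shows $(S\otimes_R U_n)_{n\geqslant 0}$ is again lim Cohen-Macaulay. For any finitely generated $R$-module $M$ one has $\nu_S(S\otimes_R M)=\nu_R(M)$ and $e_d(S\otimes_R M)=e_d(M)$, both by the flatness of $S$ together with $\fm S=\fn$ and a composition series argument (reasoning as in \ref{ch:gonflament}); so the ratio $e_d(U_n)/\nu_R(U_n)$ is unchanged. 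Thus we may assume $k$ is infinite.

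Now choose $\bsr\colonequals r_1,\dots,r_d$ generating a minimal reduction of $\fm$; such a sequence exists since $k$ is infinite. Set $K\colonequals \kos{\bsr}R$. Then $K$ is a short complex in $\perf R$: it is a finite free complex of length $d=\dim R$ with finite length homology and $\HH 0K=R/(\bsr)\ne 0$. By \eqref{eq:serre},
\[
e_d(U_n)=\chi(\bsr;U_n)=\sum_{i=0}^{d}(-1)^i\length_R\HH i{K\otimes_R U_n}\,.
\]
Applying the lim Cohen-Macaulay condition \eqref{eq:lcm} to $F=K$ gives $\length_R\HH i{K\otimes_R U_n}/\nu_R(U_n)\to 0$ for each $i\ge 1$, and since there are only finitely many terms,
\[
\lim_{n\to\infty}\Bigl(\frac{e_d(U_n)}{\nu_R(U_n)}-\frac{\length_R\HH 0{K\otimes_R U_n}}{\nu_R(U_n)}\Bigr)=0\,.
\]
Finally $\HH 0{K\otimes_R U_n}=U_n/(\bsr)U_n$, and $(\bsr)U_n\subseteq\fm U_n$ yields a surjection $U_n/(\bsr)U_n\twoheadrightarrow U_n/\fm U_n$, hence $\length_R\HH 0{K\otimes_R U_n}\ge \length_R(U_n/\fm U_n)=\nu_R(U_n)$. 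So the subtracted term is $\ge 1$ for every $n$, and since the displayed difference tends to $0$ we get $\liminf_{n\to\infty} e_d(U_n)/\nu_R(U_n)\ge 1$.

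The argument is short; the only points requiring care are that passage to the flat extension $S$ preserves both $\nu_R(U_n)$ and $e_d(U_n)$, and the (easy) observation that $\kos{\bsr}R$ qualifies as a short complex so that the lim Cohen-Macaulay hypothesis applies to it. Once these are noted, the estimate is immediate, so I do not anticipate a genuine obstacle here.
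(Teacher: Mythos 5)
Your argument is correct and follows the same route as the paper: reduce to an infinite residue field via flat base change, take the Koszul complex $K$ on a minimal reduction of $\fm$, use \eqref{eq:serre} to identify $e_d(U_n)=\chi(K\otimes_R U_n)$, invoke the lim Cohen--Macaulay condition to kill the higher Koszul homology in the limit, and observe $\length_R\HH 0{K\otimes_R U_n}\ge\nu_R(U_n)$. You simply spell out a couple of steps the paper leaves implicit.
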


\begin{proof}
Given Corollary~\ref{co:lcm-basechange} and the fact that both $e_d(-)$ and $\nu_R(-)$ are unchanged when we extend the residue field, we can assume that the residue field of $R$ is infinite. Let $K$ be the Koszul complex on a minimal generating set for a minimal reduction of $\fm$. For each integer $n$ one has
\[
\length_R \HH 0{K\otimes_RU_n}\ge \nu_R(U_n)\,.
\]
This justifies the inequality below; the first equality is by \eqref{eq:serre}:
\[
\liminf_{n\to \infty} \frac{e_d(U_n)}{\nu_R(U_n)}
= \liminf_{n\to\infty} \frac{\chi(K\otimes_RU_n)} {\nu_R(U_n)}
= \liminf_{n\to\infty} \frac{\length_R \HH 0{K\otimes_RU_n}} {\nu_R(U_n)} \ge 1\,.
\]
The second equality is by the lim Cohen-Macaulay condition.
\end{proof}

The preceding result motivates the definition below from \cite{Ma:2020}.

\begin{chunk}
\label{ch:lus}
A sequence $(U_n)_{n\geqslant 0}$ of $R$-modules is a \emph{lim Ulrich sequence} if it is lim Cohen-Macaulay and satisfies
\begin{equation}
\label{eq:lus}
\lim_{n\to \infty} \frac{e_d(U_n)}{\nu_R(U_n)} = 1\,.
\end{equation}
This bundles two conditions in one: the limit should exist and  it should be $1$.

If a nonzero $R$-module $U$ is maximal Cohen-Macaulay, respectively, Ulrich, then the sequence with $U_n=U$ for all $n$ is lim Cohen-Macaulay, respectively, lim Ulrich. In particular lim Ulrich sequences exist when $\dim R\le 1$; see Example~\ref{ex:dim1U}. Moreover if $(U_n)_{n\geqslant 0}$ is a lim Ulrich sequence of $R$-modules and $(R,\fm) \to (S,\fn)$ is a flat local map with $\fm S=\fn$, then the sequence of $S$-modules $(S\otimes_RU_n)_{n\geqslant 0}$ is lim Ulrich. This is immediate from Corollary~\ref{co:lcm-basechange} and \ref{ch:gonflament}.
\end{chunk}

\begin{chunk}
\label{ch:limU-point}
An element of $\euler R$ is a \emph{lim Ulrich point} if it is in the strictly positive real cone spanned by points of the form
\[
\lim_{n\to \infty} \frac{[U_n]}{\nu_R(U_n)}\,,
\]
where the sequence $(U_n)_{n\geqslant 0}$ is lim Ulrich and the limit exists.  A \emph{lim Cohen-Macaulay point} has the obvious meaning.

For later use we record the observation that if $\euler R$ has a lim Cohen-Macaulay point and $R\to S$ is a flat map with artinian closed fiber, then $\euler S$ has a lim Cohen-Macaulay point; this is immediate from Corollary~\ref{co:lcm-basechange}. The same holds for lim Ulrich points if also $\fm S = \fn$, where $\fm$ and $\fn$ are the maximal ideals of $R$ and $S$, respectively.

If $(U_n)_{n\geqslant 0}$  is a lim Cohen-Macaulay sequence and the sequence $([U_n]/\nu_R(U_n))$ converges in $\euler R$, say to a point $\alpha$,  then
\begin{equation}
\label{eq:limU-point1}
\pair F{\alpha} =\lim_{n\to \infty} \frac{\chi(F, U_n)}{\nu_R(U_n)} = \lim_{n\to \infty} \frac{\length_R\HH 0{F\otimes_R U_n}}{\nu_R(U_n)}
\end{equation}
for any short complex $F$ in $\perf R$.  The first equality holds because of the convergence, whereas the second one is a consequence of \eqref{eq:lcm}.  It will also be useful to note that if the sequence $(U_n)$ is lim Ulrich then
\begin{equation}
\label{eq:limU-point2}
e_d(\alpha) = \lim_{n\to \infty} \frac{ e_d(U_n)}{\nu_R(U_n)} = 1\,.
\end{equation}
The second equality is  from \eqref{eq:lus}.

By the discussion above if $\alpha$ is a lim Cohen-Macaulay point in $\euler R$ then $\pair F{\alpha} >0$ for any short complex $F$ in $\perf R$ with $\hh F$ nonzero; in particular $0$ cannot be a lim Cohen-Macaulay point.
\end{chunk}

\begin{lemma}
\label{le:lcm-converges}
If $(U_n)_{n\geqslant 0}$ is a lim Cohen-Macaulay sequence, then for any short complex $F$ in $\perf R$   one has
\[
\length_R \HH 0F \ge \limsup_{n\to \infty}  \frac{\pair F{U_n}}{\nu_R(U_n)}\ge \liminf_{n\to \infty}  \frac{\pair F{U_n}}{\nu_R(U_n)}\ge  \nu_R \HH 0F \,.
\]
Thus if $\euler R$ is finite dimensional $(U_n/\nu_R(U_n))$ has convergent subsequence.
\end{lemma}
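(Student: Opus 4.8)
The plan is to extract the chain of inequalities from the lim Cohen--Macaulay condition by reducing everything to degree zero, and then to parlay the resulting boundedness into the statement about convergent subsequences.

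\emph{The inequalities.}
Since $F$ is short and $(U_n)_{n\ge 0}$ is lim Cohen--Macaulay, \eqref{eq:lcm} gives $h_i(F\otimes_R U_n)/\nu_R(U_n)\to 0$ for each $i\ge 1$; as the alternating sum defining $\chi$ is finite, this yields
\[
\limsup_{n\to\infty}\frac{\pair F{U_n}}{\nu_R(U_n)}=\limsup_{n\to\infty}\frac{h_0(F\otimes_R U_n)}{\nu_R(U_n)}\,,
\]
and the same with $\liminf$ in place of $\limsup$. Write $M\colonequals \HH 0F$; it has finite length because $F\in\perf R$, and right exactness of $-\otimes_R U_n$ identifies $\HH 0{F\otimes_R U_n}$ with $M\otimes_R U_n$, again of finite length. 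For the upper bound I would push a composition series of $M$ through $-\otimes_R U_n$: each subquotient is $k$ and $k\otimes_R U_n\cong U_n/\fm U_n$ has length $\nu_R(U_n)$, so right exactness and induction give $\length_R(M\otimes_R U_n)\le \length_R(M)\,\nu_R(U_n)$. For the lower bound, $(M\otimes_R U_n)\otimes_R k\cong (M\otimes_R k)\otimes_k(U_n\otimes_R k)$ shows $\nu_R(M\otimes_R U_n)=\nu_R(M)\,\nu_R(U_n)$, and since $\length_R(N)\ge \nu_R(N)$ for every finite length $N$ we get $\length_R(M\otimes_R U_n)\ge \nu_R(M)\,\nu_R(U_n)$. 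Dividing by $\nu_R(U_n)$ and passing to $\limsup$ and $\liminf$ produces the displayed chain, since $\length_R M=\length_R\HH 0F$ and $\nu_R(M)=\nu_R\HH 0F$.

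\emph{Convergent subsequences.}
In particular $\pair F{U_n}/\nu_R(U_n)$ is a bounded real sequence for every short $F$. When $\euler R$ is finite dimensional so is $\eulerk R$, and the pairing between them is perfect; fix a basis of $\eulerk R$ represented by complexes $F_1,\dots,F_m$ in $\perf R$. It then suffices to bound $\pair{F_j}{U_n}/\nu_R(U_n)$ for each $j$, and since the $F_j$ need not be short I would prove boundedness of $\pair F{U_n}/\nu_R(U_n)$ for an arbitrary $F\in\perf R$. For this, choose a system of parameters $\bsr=r_1,\dots,r_d$ of $R$ lying in the ideal $\ker(R\to\hom_{\cat D}(F,F))$, which is primary to $\fm$ because $F$ has finite length homology, and set $K\colonequals \kos{\bsr}R$; then \eqref{eq:lcm} holds for the short complex $K$. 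Exactly as in the proof of Lemma~\ref{le:lcm}, the fact that $\bsr$ annihilates $\hom_{\cat D}(F,F)$ makes $\HH j{F\otimes_R U_n}$ a direct summand of $\HH{j+d}{(F\otimes_R K)\otimes_R U_n}$, whence $\sum_j h_j(F\otimes_R U_n)\le \sum_i h_i\bigl((F\otimes_R K)\otimes_R U_n\bigr)$. Applying Lemma~\ref{le:lim-zero} with $W(n)=K\otimes_R U_n$, $P=F$ and $s=0$ replaces $\limsup_n h_i\bigl((F\otimes_R K)\otimes_R U_n\bigr)/\nu_R(U_n)$ by $\limsup_n h_i\bigl(F\otimes_R (U_n/\bsr U_n)\bigr)/\nu_R(U_n)$, which is at most $\rank_R(F_i)\cdot\limsup_n \length_R(U_n/\bsr U_n)/\nu_R(U_n)$; and a free cover $R^{\nu_R(U_n)}\twoheadrightarrow U_n$ gives $\length_R(U_n/\bsr U_n)\le \nu_R(U_n)\cdot\length_R(R/\bsr R)$. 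Summing over $i$ bounds $\limsup_n \sum_j h_j(F\otimes_R U_n)/\nu_R(U_n)$, hence $\pair F{U_n}/\nu_R(U_n)$, by a constant depending only on $F$ and $\bsr$. Consequently $\bigl(\pair{F_1}{U_n}/\nu_R(U_n),\dots,\pair{F_m}{U_n}/\nu_R(U_n)\bigr)$ lies in a bounded subset of $\bbR^m$, so by the Bolzano--Weierstrass theorem it has a convergent subsequence, and by perfectness of the pairing this is precisely the assertion that $([U_n]/\nu_R(U_n))$ converges in $\euler R$ along that subsequence.

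\emph{Main obstacle.}
The first part is routine, and the final compactness argument is immediate once the right boundedness is available. The crux is that one must bound $\pair F{U_n}/\nu_R(U_n)$ for \emph{all} $F\in\perf R$, not merely for short complexes: since $\euler R$ may be strictly larger than $\bbR^{\Lambda(R)}$, the functionals attached to short complexes need not separate the points of $\euler R$, so the reduction to a single Koszul complex from Lemma~\ref{le:lcm} — together with the crude free-cover estimate on $\length_R(U_n/\bsr U_n)$ — is genuinely needed.
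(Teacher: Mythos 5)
Your argument is correct. The chain of inequalities is proved exactly as in the paper: your composition-series argument for the upper bound and the identification $\nu_R(M\otimes_R U_n)=\nu_R(M)\,\nu_R(U_n)$ for the lower bound are precisely the content of the two surjections $H\otimes_R R^{\nu_R(U)}\twoheadrightarrow H\otimes_R U\twoheadrightarrow (H/\fm H)\otimes_k(U/\fm U)$ that the paper invokes, after reducing to degree zero via \eqref{eq:lcm}. For the compactness assertion, however, you do genuinely more than the published proof writes down. The paper observes that classes of complexes in $\perf R$ span $\eulerk R$ and then states that ``the already established part of the result'' gives boundedness; but that established part only controls $\pair F{U_n}/\nu_R(U_n)$ for \emph{short} $F$, and it is not evident that classes of short complexes span $\eulerk R$. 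You correctly flag this: to deduce boundedness of $[U_n]/\nu_R(U_n)$ in $\euler R\cong(\eulerk R)^{*}$ one needs bounds along a spanning set of $\eulerk R$, i.e.\ for arbitrary $F\in\perf R$. Your fix---the direct-summand splitting $\HH j{F\otimes_R U_n}\hookrightarrow\HH{j+d}{F\otimes_R K\otimes_R U_n}$ from the proof of Lemma~\ref{le:lcm}, then Lemma~\ref{le:lim-zero} to pass to $F\otimes_R(U_n/\bsr U_n)$, then the free-cover bound $\length_R(U_n/\bsr U_n)\le\nu_R(U_n)\,\length_R(R/\bsr R)$---produces a uniform bound $|\pair F{U_n}|/\nu_R(U_n)\le\bigl(\sum_i\rank F_i\bigr)\length_R(R/\bsr R)$, which is exactly what is needed. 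I would regard this as filling in a step that the paper compresses into a single sentence rather than as a different route; it buys an explicit constant and makes the deduction from ``bounded pairings'' to ``bounded in $\euler R$'' honest.
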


\begin{proof}
Since $F$ satisfies \eqref{eq:lcm} one gets an equality
\[
\limsup_{n\to \infty}  \frac{\pair F{U_n}}{\nu_R(U_n)} = \limsup_{n\to \infty}  \frac{h_0(F\otimes_RU_n)}{\nu_R(U_n)}\,,
\]
and similarly for the lim inf. It remains to observe that there are inequalities
\[
h_0(F)\nu_R(U_n)  \ge h_0(F\otimes_RU_n) \ge  \nu_R (\HH 0F)\nu_R(U_n) \,.
\]
These hold because $\HH 0{F\otimes_R U_n}=\HH 0F\otimes_R U_n$, and for any $R$-modules $H, U$ there are surjections
\[
H \otimes_R R^{\nu_R(U)}\twoheadrightarrow H \otimes_R U \twoheadrightarrow (H/\fm H)\otimes_k (U/\fm U)\,.
 \]
 This justifies the stated inequalities.
 
When  $\euler R$ is finite dimensional, its topology is the Euclidean one, and it is dual to $\eulerk R$. Since the classes of the complexes $F$ in $\perf R$ span $\eulerk R$,  the already established part of the result means that $(U_n/\nu_R(U_n))$ is  bounded in the Euclidean metric. Then the stated assertion is now clear.
\end{proof}

 The proof of the next result follows that of Proposition~\ref{pr:ulrich-good}.

\begin{theorem}
\label{th:lus}
Let $(R,\fm,k)$ be a local ring.  If $\alpha$ is a lim Ulrich point in $\euler R$, then for any short complex $F$ in $\perf R$   and integer $i$ there is an  inequality
\[
\binom di \pair F{\alpha} \ge \beta^R_i(F) e_d(\alpha)\qquad \text{where $d=\dim R$.}
\]
In particular $2^{d} \pair F{\alpha} \ge  \beta_R(F)  e_d(\alpha)$.
\end{theorem}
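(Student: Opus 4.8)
The plan is to mimic the proof of Proposition~\ref{pr:ulrich-good}, replacing the exact identities that hold for a genuine Ulrich module by asymptotic ones coming from Lemma~\ref{le:lim-zero}. I would begin with two reductions. First, using \ref{ch:gonflament} together with the compatibility of lim Ulrich points, of the pairing $\pair{-}{-}$, of the multiplicity $e_d(-)$, and of the Betti numbers $\beta^R_i(F)$ with a flat local base change $R\to S$ for which $\fm S$ is the maximal ideal of $S$ (see \ref{ch:limU-point} and Corollary~\ref{co:lcm-basechange}), I may assume that $k$ is infinite. Second, since $\pair F{-}$ is $\bbR$-linear and $e_d(-)$ is $\bbR$-linear on $\euler R$ by Corollary~\ref{co:ed-symbol}, and since $e_d$ of a single limit point $\lim_n[U_n]/\nu_R(U_n)$ equals $1$ by \eqref{eq:limU-point2}, it is enough to prove $\binom di\pair F\alpha\ge\beta^R_i(F)$ when $\alpha=\lim_n[U_n]/\nu_R(U_n)$ for one lim Ulrich sequence $(U_n)_{n\geqslant 0}$; the general positive $\bbR$-combination then follows by forming the same combination of these inequalities and using additivity of $e_d$.

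So I would fix such an $\alpha$ and a short complex $F$ in $\perf R$, and write $h_i(-)$ as in \eqref{eq:h-notation}. Choose a minimal generating set $\bsr\colonequals r_1,\dots,r_d$ of a minimal reduction of $\fm$ and set $K\colonequals\kos{\bsr}R$; this is itself a short complex in $\perf R$, so the lim Cohen--Macaulay hypothesis applies to it, and \eqref{eq:serre} gives $\chi(K\otimes_RU_n)=e_d(U_n)$. Combining this with $\nu_R(U_n/\bsr U_n)=\nu_R(U_n)$ and the lim Ulrich equality \eqref{eq:lus} should give
\[
\lim_{n\to\infty}\frac{\length_R(U_n/\bsr U_n)}{\nu_R(U_n)}=1
\qquad\text{and hence}\qquad
\lim_{n\to\infty}\frac{\length_R(\fm U_n/\bsr U_n)}{\nu_R(U_n)}=0\,.
\]
In other words, asymptotically $U_n/\bsr U_n$ is a copy of $k^{\nu_R(U_n)}$; this is the lim-analogue of the identity $U/\bsr U\cong k^{e(U)}$ available for an honest Ulrich module. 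Isolating this asymptotic Ulrich behaviour of $U_n/\bsr U_n$ — in particular recognizing that the lim Ulrich condition is exactly the input needed beyond lim Cohen--Macaulayness — is the key observation; the rest is bookkeeping with Lemma~\ref{le:lim-zero}.

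Finally I would estimate the single sequence $h_i(F\otimes_RK\otimes_RU_n)/\nu_R(U_n)$ from above and below. For the upper bound, apply Lemma~\ref{le:lim-zero} with $W(n)=F\otimes_RU_n$, $s=0$, $P=K$: using $\HH 0{F\otimes_RU_n}=\HH 0F\otimes_RU_n$, the crude bound $h_i(K\otimes_RN)\le\binom di\length_RN$ for a finite length module $N$, and \eqref{eq:limU-point1}, one gets $\limsup_n h_i(K\otimes_RF\otimes_RU_n)/\nu_R(U_n)\le\binom di\pair F\alpha$. For the lower bound, apply Lemma~\ref{le:lim-zero} with $W(n)=K\otimes_RU_n$, $s=0$, $P=F$; since $\HH 0{K\otimes_RU_n}=U_n/\bsr U_n$, this reduces matters to bounding $h_i(F\otimes_R(U_n/\bsr U_n))$ below. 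Tensoring the exact sequence $0\to\fm U_n/\bsr U_n\to U_n/\bsr U_n\to k^{\nu_R(U_n)}\to 0$ with the free complex $F$ and chasing the homology long exact sequence gives
\[
h_i(F\otimes_R(U_n/\bsr U_n))\ \ge\ \nu_R(U_n)\,\beta^R_i(F)-\rank_R(F_{i-1})\cdot\length_R(\fm U_n/\bsr U_n)\,,
\]
so dividing by $\nu_R(U_n)$ and using the estimate of the previous paragraph yields $\liminf_n h_i(F\otimes_RK\otimes_RU_n)/\nu_R(U_n)\ge\beta^R_i(F)$. Since $\liminf$ and $\limsup$ are taken of one and the same sequence, $\beta^R_i(F)\le\binom di\pair F\alpha$, which is the asserted inequality because $e_d(\alpha)=1$. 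Summing over $0\le i\le d$, with $\sum_i\binom di=2^d$ and $\sum_i\beta^R_i(F)$ equal to the total Betti number $\beta^R(F)$, then gives the ``in particular'' statement, and the reductions of the first paragraph complete the general case.

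I do not anticipate a serious obstacle: the two applications of Lemma~\ref{le:lim-zero} are what make the argument work, and the one genuinely new point, compared with Proposition~\ref{pr:ulrich-good}, is the asymptotic statement $\length_R(U_n/\bsr U_n)/\nu_R(U_n)\to1$. The only things to be careful about are that $K$ really is a short complex in $\perf R$ (so \eqref{eq:serre} and \eqref{eq:lcm} both apply to it), and that $\HH 0F\otimes_RU_n$ has finite length, so that the crude Koszul length bound is available.
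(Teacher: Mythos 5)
Your proposal is correct and follows the paper's proof in all essential respects: the reduction to a single lim Ulrich sequence with $e_d(\alpha)=1$, the base change to ensure $k$ is infinite and a minimal reduction exists, and the two applications of Lemma~\ref{le:lim-zero} to $h_i(K\otimes_RF\otimes_RU_n)/\nu_R(U_n)$ with the roles of $P$ and $W(n)$ swapped. The one place you deviate is the final step: where you extract a lower bound on $h_i(F\otimes_R(U_n/\bsr U_n))$ by a direct long-exact-sequence chase, the paper instead applies Lemma~\ref{le:lim-zero} a third time (with $W(n)=\fm U_n/\bsr U_n$) to conclude that $\limsup_n h_i(F\otimes_R(U_n/\bsr U_n))/\nu_R(U_n)$ actually \emph{equals} $\beta^R_i(F)$; both routes yield the inequality $\binom di\pair F\alpha\ge\beta^R_i(F)$, and the difference is cosmetic.
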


\begin{proof}
By definition, $\alpha$ is a  positive linear combination of  elements of the form
\[
\lim_{n\to \infty} \frac{[U_n]}{\nu_R(U_n)}
\]
with  $(U_n)$ a  lim Ulrich sequence.   Since both $\pair F{-}$ and $e_d(-)$ are additive on $\euler R$ we can suppose that $\alpha$ is an element of the form above.  Thus  by \eqref{eq:lus} the desired result is that
\[
\binom di \pair F{\alpha} \ge \beta^R_i(F) \,.
\]
Moreover, by the discussion in \ref{ch:lus}, expanding $k$ if necessary, we can pick a system of parameters $\bsr$ that generates a minimal reduction for $\fm$. Let $K$ be the Koszul complex on $\bsr$ so that $e_d(M)=\chi(K\otimes_RM)$ for any finitely generated $R$-module $M$; see \eqref{eq:serre}. We estimate the limiting behavior of the sequence
\[
\frac{h_i(K\otimes_R F\otimes_R U_n)}{\nu_R(U_n)}
\]
in two ways. These estimates are obtained by repeated application of  Lemma~\ref{le:lim-zero} with $v_n=\nu_R(U_n)$ and $s=0$, and different choices of $W(n)$ and $P$.

For one thing, since $(U_n)$ is a lim Cohen-Macaulay sequence we can apply it with $W(n) = F\otimes_R U_n$ to get the first equality below:
\begin{align*}
\limsup_{n\to \infty}  \frac{h_i(K\otimes_R F\otimes_R U_n)}{\nu_R(U_n)}
	&= \limsup_{n\to \infty}  \frac{h_i(K\otimes_R \HH 0{F\otimes_R U_n})}{\nu_R(U_n)} \\
	&\le \limsup_{n\to \infty}  \binom di \frac{h_0(F\otimes_R U_n)}{\nu_R(U_n)} \\
	& =  \binom di \pair F{\alpha} \,.
\end{align*}
The inequality holds because $K$ is a finite free complex, of rank $\binom di$ in degree $i$. The second equality is by \eqref{eq:limU-point1}.

On the other hand noting that $K\otimes_R F\otimes_RU_n\cong F\otimes_R K\otimes_RU_n$ as $R$-complexes and reversing the roles of $F$ and $K$ in the first step of the argument above yields
\begin{align*}
\limsup_{n\to \infty} \frac{h_i(K\otimes_R F\otimes_R U_n)}{\nu_R(U_n)}
	&= \limsup_{n\to \infty} \frac{h_i(F\otimes_R \HH 0{K\otimes_R U_n})}{\nu_R(U_n)}\\
	&= \limsup_{n\to \infty}  \frac{h_i(F\otimes_R U_n/\bsr U_n)}{\nu_R(U_n)}\\
\end{align*}
Consider the exact sequences
\begin{equation}
\label{eq:lus1}
0\lra \frac{\fm U_n}{\bsr U_n} \lra \frac{U_n}{\bsr U_n} \lra \frac{U_n}{\fm U_n}\lra 0
\end{equation}
From this sequence and \eqref{eq:lus} it follows that
\[
\lim_{n\to\infty} \frac{\length_R(\fm U_n/\bsr U_n)}{\nu_R(U_n)}=0\,.
\]
Therefore  Lemma~\ref{le:lim-zero}, now applied with $W(n) = \fm U_n/\bsr U_n$ and $P\colonequals F$ yields
\[
\limsup_{n\to\infty} \frac{h_i(F\otimes_R (\fm U_n/\bsr U_n))}{\nu_R(U_n)}=0
\]
for all $i$.  Feeding this information  back into  the exact sequence \eqref{eq:lus1}  gives for each integer $i$ the first equality below
\begin{align*}
\limsup_{n\to \infty} \frac{h_i(F\otimes_R U_n/\bsr U_n)}{\nu_R(U_n)}
	&= \limsup_{n\to \infty}  \frac{h_i(F\otimes_R U_n/\fm U_n)}{\nu_R(U_n)}\\
	&= \limsup_{n\to \infty}  \frac{h_i(\hh{F\otimes_Rk}\otimes_k{U_n/\fm U_n})}{\nu_R(U_n)}\\
	&= \beta^R_i(F)
\end{align*}
In summary one gets that
\[
\limsup_{n\to \infty} \frac{h_i(K\otimes_R F\otimes_R U_n)}{\nu_R(U_n)}  = \beta^R_i(F)\,.
\]
Combining this with the upper bound $\binom di \pair F{\alpha}$ for the limit obtained earlier yields the desired inequality.
\end{proof}

The corollary below, which extends Theorem~ \ref{th:UlrichspanR}, is our main motivation for considering lim Ulrich points.

\begin{corollary}
\label{co:lus}
Let $F$ be a short complex in $\perf R$. If $[R]$ is a lim Ulrich point in $\euler R$, then the inequalities in \eqref{eq:euler} hold for $\chi(F)$.

When the ring $R$ is as in \ref{ch:frobenius} and the class $[R]_d$ in $\euler R$ is a lim Ulrich point, then the analogues of \eqref{eq:euler} for $\chi_{\infty}(F)$ hold.
\end{corollary}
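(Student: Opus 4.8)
The plan is to obtain both assertions as immediate consequences of Theorem~\ref{th:lus}, once the pairing $\pair F{-}$ is identified with the relevant Euler characteristic and the value of $e_d(-)$ at the relevant class is computed. So the real content is already packaged in Theorem~\ref{th:lus}, and what remains is bookkeeping in $\euler R$.

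First I would treat $\chi(F)$. For a short complex $F$ in $\perf R$, the definition of the pairing in \ref{ch:pairing} gives $\pair F{[R]} = \chi(F\otimes_R R) = \chi_R(F)$, and $e_d([R]) = e_d(R) = e(R)$ by \ref{ch:ed-symbol} together with Corollary~\ref{co:ed-symbol}. Feeding these into Theorem~\ref{th:lus} with $\alpha = [R]$ — which is a lim Ulrich point by hypothesis — yields $\binom di \chi_R(F) \ge \beta^R_i(F)\, e(R)$ for all $i$, and in particular for $0\le i\le d$, which is exactly \eqref{eq:euler}.

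Next I would treat $\chi_\infty(F)$, applying Theorem~\ref{th:lus} with $\alpha=[R]_d$. Here \eqref{eq:duttam} identifies $\pair F{[R]_d}$ with $\chi_\infty(F)$, so the theorem reads $\binom di \chi_\infty(F) \ge \beta^R_i(F)\, e_d([R]_d)$. The one point needing attention is the equality $e_d([R]_d) = e(R)$: since $e_d(-)$ factors through the projection ${\grog R}_{\bbR}\to{\grog R}_{\bbR}^{(d)}$ — equivalently, $e_d$ annihilates each eigenspace ${\grog R}_{\bbR}^{(i)}$ with $i<d$, which follows from the proof of Corollary~\ref{co:ed-symbol} combined with Lemma~\ref{le:Feigen} — one has $e_d([R]_d)=e_d([R])=e(R)$. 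This gives the analogue of \eqref{eq:euler} for $\chi_\infty(F)$.

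I do not anticipate a genuine obstacle: all the work is in Theorem~\ref{th:lus} and in the structure of $\euler R$ and its Frobenius decomposition established earlier. The only step demanding any care is the verification that $e_d$ is supported on the top Frobenius eigenspace, i.e.\ $e_d([R]_d)=e(R)$, and even that is essentially recorded in the discussion preceding Lemma~\ref{le:frobenius-limit}.
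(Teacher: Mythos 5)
Your proposal is correct and follows the same route as the paper: cite Theorem~\ref{th:lus} with $\alpha=[R]$ (resp.\ $\alpha=[R]_d$), then identify $\pair F{[R]}=\chi_R(F)$ and $\pair F{[R]_d}=\chi_\infty(F)$ via \eqref{eq:duttam}. The paper's proof is a one-liner that leaves the equalities $e_d([R])=e(R)$ and $e_d([R]_d)=e(R)$ implicit; you supply the short justification (that $e_d$ factors through the top Frobenius eigenspace, as recorded before Lemma~\ref{le:frobenius-limit}), which is a reasonable bit of bookkeeping but does not change the argument.
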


\begin{proof}
These assertions are immediate from Theorem~\ref{th:lus}, once we observe that $\chi(F)=\pair F{[R]}$; for the second we need that $\chi_{\infty}(F) = \pair F{[R]_d}$ from \eqref{eq:duttam}.
\end{proof}

Even if the class of $R$ is not a lim Ulrich point, just the existence of lim Ulrich points yields interesting estimates on Euler characteristics of short complexes.

\begin{corollary}
When $\euler R$ has a lim Ulrich point, for any short complex $F$ in $\perf R$   and integer $i$ there is an inequality
\[
\binom {d}i \length_R \HH 0F \ge\beta^R_i(F) \qquad\text{where $d=\dim R$.}
\]
In particular $2^{d} \length_R \HH 0F \ge \beta^R(F)$.
\end{corollary}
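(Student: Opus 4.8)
The plan is to feed a lim Ulrich point into Theorem~\ref{th:lus} and then discard the pairing in favour of the cruder quantity $\length_R\HH0F$, using Lemma~\ref{le:lcm-converges}. So I would fix a lim Ulrich point $\alpha$ in $\euler R$. By the definition recalled in \ref{ch:limU-point}, the mere existence of such a point already produces a lim Ulrich sequence $(U_n)_{n\geqslant 0}$ for which $([U_n]/\nu_R(U_n))_{n\geqslant 0}$ converges in $\euler R$; replacing $\alpha$ by that limit, which is itself a lim Ulrich point, I may assume $\alpha=\lim_{n\to\infty}[U_n]/\nu_R(U_n)$ for a single lim Ulrich sequence. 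Then \eqref{eq:limU-point2} gives $e_d(\alpha)=1$.

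Next, for a fixed short complex $F$ in $\perf R$, I would invoke Lemma~\ref{le:lcm-converges} --- applicable since a lim Ulrich sequence is in particular lim Cohen-Macaulay --- to obtain
\[
\length_R\HH0F\ \ge\ \limsup_{n\to\infty}\frac{\pair F{U_n}}{\nu_R(U_n)}\ =\ \pair F{\alpha}\,,
\]
the equality holding because the sequence converges and pairing against $F$ is continuous (see \ref{ch:grop-topology}). On the other hand Theorem~\ref{th:lus}, applied to $\alpha$, yields $\binom di\pair F{\alpha}\ge\beta^R_i(F)e_d(\alpha)=\beta^R_i(F)$ for every $i$, and likewise $2^d\pair F{\alpha}\ge\beta^R(F)$. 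Combining these with the displayed inequality gives
\[
\binom di\length_R\HH0F\ \ge\ \binom di\pair F{\alpha}\ \ge\ \beta^R_i(F)
\qquad\text{and}\qquad
2^d\length_R\HH0F\ \ge\ \beta^R(F)\,,
\]
which is the assertion.

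I do not anticipate a genuine obstacle; the one place to be a little careful is the reduction to a single lim Ulrich sequence, and if one instead prefers to argue with a positive combination $\alpha=\sum_j c_j\alpha_j$ of constituent limit points, then $e_d(\alpha)=\sum_j c_j$ by additivity of $e_d(-)$ on $\euler R$ (Corollary~\ref{co:ed-symbol}) together with \eqref{eq:limU-point2}, while applying Lemma~\ref{le:lcm-converges} to each constituent sequence gives $\pair F{\alpha}=\sum_j c_j\pair F{\alpha_j}\le\bigl(\sum_j c_j\bigr)\length_R\HH0F=e_d(\alpha)\length_R\HH0F$; feeding this into Theorem~\ref{th:lus} and cancelling the strictly positive scalar $e_d(\alpha)$ recovers the same inequalities. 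Note that the $e(R)$ factor appearing in the Ulrich-module statements is absent here precisely because $e_d(\alpha)$ is normalised to $1$ (or cancels).
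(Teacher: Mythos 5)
Your argument is correct and is exactly the paper's intended proof: Lemma~\ref{le:lcm-converges} bounds $\pair{F}{\alpha}$ above by $\length_R\HH0F$, Theorem~\ref{th:lus} bounds $\beta^R_i(F)e_d(\alpha)$ above by $\binom{d}{i}\pair{F}{\alpha}$, and \eqref{eq:limU-point2} supplies the normalisation $e_d(\alpha)=1$. The reduction to a single lim Ulrich sequence is harmless since the hypothesis only asserts existence of a lim Ulrich point, and any one such point is a positive combination whose individual summands are themselves lim Ulrich points of the required convergent form; your alternative treatment via additivity of $e_d$ and of the pairing is also fine but not needed.
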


\begin{proof}
This is immediate from Lemma~\ref{le:lcm-converges}  and Theorem~\ref{th:lus}, given \eqref{eq:limU-point2}.
\end{proof}

Combining the inequality in the result above with \eqref{eq:walker}, when it applies, yields
\[
2^{d} \length_R \HH 0F \ge \beta^R(F) \ge 2^d \frac{|\chi_R(F)|}{\length_R\hh F}\,.
\]
When $F$ is a free resolution of a nonzero finite length $R$-module $M$, this reads
\[
2^d\length_R(M) \ge \beta^R(F) \ge 2^d\,.
\]

 We record another application of Lemma~\ref{le:lim-zero},  which recovers \cite[Lemma~2.9]{Ma:2020}, and leads to the material presented in the next section.  While the hypothesis is rather stringent, in the graded context one can run the  argument given below degreewise yielding a much stronger result. This is the essential idea in the proof of Theorem~\ref{th:usheaf-umodule}, which is why we bring this up.

\begin{lemma}
\label{le:lch-lcm}
If $(U_n)_{n\geqslant 0}$ is a sequence of nonzero $R$-modules such that for each integer $i\le d -1$ one has
\[
\lim_{n\to \infty} \frac{\length_R  \mathrm{H}_{\fm}^i(U_n)}{\nu_R(U_n)} = 0
\]
then the sequence $(U_n)$ is lim Cohen-Macaulay.
\end{lemma}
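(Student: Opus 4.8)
The plan is to reduce, by Lemma~\ref{le:lcm}, to proving that for one fixed system of parameters $\bsr\colonequals r_1,\dots,r_d$ for $R$ one has
\[
\lim_{n\to\infty}\frac{\length_R\HH i{\bsr;U_n}}{\nu_R(U_n)}=0\qquad\text{for all }i\ge 1\,.
\]
The idea is to compare Koszul homology with the local cohomology functors $\lch\bullet{\fm}-$ through the \v Cech (stable Koszul) complex, and then to invoke Lemma~\ref{le:lim-zero}.

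To this end, let $\check C^\bullet(\bsr)$ be the \v Cech complex on $\bsr$, regarded as a complex concentrated in homological degrees $0,-1,\dots,-d$, so that $\HH{-q}{\check C^\bullet(\bsr)\otimes_R M}=\lch q{\fm}M$ for every finitely generated $M$; here one uses that $(\bsr)$ is $\fm$-primary. Put $W(n)\colonequals\check C^\bullet(\bsr)\otimes_R U_n$. With this indexing, the hypothesis of the lemma says exactly that $\length_R\HH j{W(n)}/\nu_R(U_n)\to 0$ for every $j\ne -d$. The next, and key, point is that, $\bsr$ being a system of parameters, each Koszul homology module $\HH p{\bsr;U_n}$ is annihilated by the $\fm$-primary ideal $(\bsr)$, hence has finite length and is $\fm$-torsion; consequently $\check C^q(\bsr)\otimes_R\HH p{\bsr;U_n}=0$ for all $q\ge 1$, since $\check C^q(\bsr)$ is a finite direct sum of localizations of $R$ at products of the $r_i$. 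Feeding this vanishing into the spectral sequence of the bounded double complex $\kos{\bsr}R\otimes_R\check C^\bullet(\bsr)\otimes_R U_n$ obtained by first taking Koszul homology, only the $\check C^0(\bsr)=R$ column survives, and one gets
\[
\HH i{\kos{\bsr}R\otimes_R W(n)}\ \cong\ \HH i{\bsr;U_n}\qquad\text{for every }i\,.
\]

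Now I would apply Lemma~\ref{le:lim-zero} to the sequence of complexes $W(n)$, taking $v_n=\nu_R(U_n)$, $s=-d$, and $P=\kos{\bsr}R$, which is a finite free complex concentrated in homological degrees $0,\dots,d$. Since $\HH s{W(n)}=\HH{-d}{W(n)}=\lch d{\fm}{U_n}$, the lemma gives
\[
\limsup_{n\to\infty}\frac{\length_R\HH i{\kos{\bsr}R\otimes_R W(n)}}{\nu_R(U_n)}
=\limsup_{n\to\infty}\frac{\length_R\HH{i+d}{\bsr;\lch d{\fm}{U_n}}}{\nu_R(U_n)}\,.
\]
For $i\ge 1$ the index $i+d$ exceeds $d$, so the Koszul homology on the right vanishes and the right-hand side is $0$. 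Combining this with the isomorphism from the previous paragraph yields $\lim_{n\to\infty}\length_R\HH i{\bsr;U_n}/\nu_R(U_n)=0$ for all $i\ge 1$, which is the assertion we reduced to; Lemma~\ref{le:lcm} then shows that $(U_n)_{n\geqslant 0}$ is lim Cohen-Macaulay.

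The routine ingredients are that the \v Cech complex computes local cohomology, that $N\otimes_R R_g=0$ whenever $N$ is $\fm$-torsion and $g$ is a nonunit, and the degree bookkeeping in the double complex. The genuinely load-bearing point is that Koszul homology on a system of parameters is $\fm$-torsion: this is what collapses the double complex and, at the same time, matches the hypothesis to the exact shape Lemma~\ref{le:lim-zero} requires. I anticipate no serious obstacle, only the mild point that Lemma~\ref{le:lim-zero} is being applied to a complex $W(n)$ whose terms are not finitely generated; this is harmless, since that lemma only constrains the lengths of the homology modules $\HH j{W(n)}$ with $j\ne s$.
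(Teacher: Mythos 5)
Your proof is correct and uses essentially the same strategy as the paper: apply Lemma~\ref{le:lim-zero} with $W(n)$ a complex computing $\mathrm{R}\Gamma_{\fm}(U_n)$, $v_n = \nu_R(U_n)$, and $s = -d$, so that the hypothesis on $\lch i\fm{U_n}$ for $i \le d-1$ pushes all the weight onto $\lch d\fm{U_n}$, whose Koszul homology vanishes in degrees above $d$. The only differences from the paper's argument are stylistic: you first reduce to a single Koszul complex via Lemma~\ref{le:lcm}, and you verify the quasi-isomorphism $\kos{\bsr}{U_n}\simeq\kos{\bsr}R\otimes_R\check C^\bullet(\bsr)\otimes_R U_n$ by hand with a spectral sequence, whereas the paper works with an arbitrary short $F\in\perf R$ and simply cites the general quasi-isomorphism $F\otimes_R U_n\simeq F\otimes_R\mathrm{R}\Gamma_{\fm}(U_n)$.
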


\begin{proof}
For each $F$  in $\perf R$ one has a quasi-isomorphism
\[
F\otimes_R U_n\simeq F\otimes_R \mathrm{R}\Gamma_{\fm}(U_n)\,.
\]
Thus applying Lemma~\ref{le:lim-zero} with $W_n = \mathrm{R}\Gamma_{\fm}(U_n)$ and $v_n=\nu_R(U_n)$ yields for any integer $i$  an equality
\[
\limsup_{n\to \infty} \frac{h_i(F\otimes_R U_n)}{v_n} = \limsup_{n\to \infty} \frac{h_{i+d}(F\otimes_R \mathrm{H}_{\fm}^d(U_n))}{v_n}\,.
\]
 When $F$ is a short complex the homology modules on the right are zero whenever $i\ge 1$. This completes the proof.
\end{proof}

\begin{chunk}
\label{ch:graded-vs-local}
Let $k$ be a field and $A$ a \emph{standard graded} $k$-algebra, that is to say, $A_0=k$ and $A$ is finitely generated as a $k$-algebra by its component $A_1$ of degree one.  Let $\fm\colonequals A_{\geqslant 1}$ be the homogenous maximal ideal of $A$. Set $R\colonequals A_\fm$; this is a local ring with maximal ideal $\fm R$, and residue field $k$. Set $d\colonequals \dim R = \dim A$.

Given a finitely generated graded $A$-module $M$, we write $e_d(M)$ for the multiplicity with respect to the $\fm$. It can be computed as so:
\[
e_d(M)  = (d-1)! \lim_{n\to \infty} \frac{\rank_k(M_n)}{n^{d-1}}\,.
\]
In this context $\perf A$ will be assumed to consist of graded free modules, with differentials respective the grading, and homology of finite rank over $k$. A short complex is to have length $d$.  A sequence $(U_n)_{n\geqslant 0}$ of finitely generated graded $A$-module is lim Cohen-Macaulay if \eqref{eq:lcm} holds for each short complex $F$ in $\perf A$; the sequence is lim Ulrich when in addition \eqref{eq:lus} holds.

\begin{proposition}
\label{pr:graded-vs-local}
Let $(U_n)_{n\geqslant0}$ be a sequence of finitely generated graded $A$-modules.
\begin{enumerate}[\quad\rm(1)]
\item
If \eqref{eq:lcm} holds for the Koszul complex on a single homogenous system of parameters for $A$, then the given sequence  is lim Cohen-Macaulay.
\item
The given sequence is lim Cohen-Macaulay if and only if the sequence of $R$-modules $((U_n)_\fm)_{n\geqslant 0}$  is lim Cohen-Macaulay.
\item
The given sequence is lim Ulrich if and only if the sequence of $R$-modules $((U_n)_\fm)_{n\geqslant 0}$ is lim Ulrich.
\end{enumerate}
\end{proposition}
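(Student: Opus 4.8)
The plan is to reduce all three statements to the local result Lemma~\ref{le:lcm} by means of a routine ``localization dictionary'' between finitely generated graded $A$-modules and their localizations at $\fm$. First I would record the relevant transfer properties. For a finitely generated graded $A$-module $N$ one has $\nu_A(N)=\nu_R(N_\fm)$, since both equal $\dim_k(N\otimes_A k)$. If moreover $N$ has finite length, then $\fm^t N=0$ for some $t$, so $N$ is a module over the artinian local ring $A/\fm^t=(A/\fm^t)_\fm$; hence the canonical map $N\to N_\fm$ is an isomorphism and $\length_A(N)=\length_R(N_\fm)$. Applying this to the modules $N/\fm^j N$ shows the Hilbert--Samuel functions of $N$ over $A$ and of $N_\fm$ over $R$ agree, so $e_d(N)=e_d(N_\fm)$. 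Finally, if $F$ is a graded free $A$-complex with finite length homology, then $F_\fm$ lies in $\perf R$, has the same length and the same Betti numbers as $F$, each $\HH i{F\otimes_A U_n}$ has finite length, and localization at $\fm$ identifies $\HH i{F\otimes_A U_n}$ with $\HH i{F_\fm\otimes_R (U_n)_\fm}$. Consequently, conditions \eqref{eq:lcm} and \eqref{eq:lus} for $F$ over $A$ are literally the same numerical statements as those for $F_\fm$ over $R$.

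For part (1), fix a homogeneous system of parameters $\bsr=r_1,\dots,r_d$ for $A$, which exists by graded Noether normalization. Then $\kos{\bsr}A$ is a short complex in $\perf A$, its localization equals $\kos{\bsr}R$, and $\bsr$ maps to a system of parameters for $R$. By hypothesis \eqref{eq:lcm} holds over $A$ for $\kos{\bsr}{U_n}$, so by the dictionary it holds over $R$ for $\kos{\bsr}{(U_n)_\fm}$; hence Lemma~\ref{le:lcm} shows that the sequence $((U_n)_\fm)_{n\geqslant 0}$ is lim Cohen--Macaulay over $R$. Now let $F$ be an arbitrary short complex in $\perf A$. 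Then $F_\fm$ is a short complex in $\perf R$, so \eqref{eq:lcm} holds over $R$ for $F_\fm\otimes_R(U_n)_\fm$, and by the dictionary this is exactly \eqref{eq:lcm} over $A$ for $F$. Thus $(U_n)$ is lim Cohen--Macaulay.

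Part (2) is then immediate. The implication ``$\Leftarrow$'' is the second half of the previous paragraph: every short complex $F$ in $\perf A$ localizes to one in $\perf R$, and the numerical data transfer. For ``$\Rightarrow$'', if $(U_n)$ is lim Cohen--Macaulay over $A$ then \eqref{eq:lcm} holds in particular for $\kos{\bsr}{U_n}$ with $\bsr$ a homogeneous system of parameters, which transfers to the corresponding statement over $R$ and, by Lemma~\ref{le:lcm}, forces $((U_n)_\fm)$ to be lim Cohen--Macaulay over $R$. Part (3) follows by combining part (2) with the identities $\nu_A(U_n)=\nu_R((U_n)_\fm)$ and $e_d(U_n)=e_d((U_n)_\fm)$ from the dictionary: they show that the limit condition \eqref{eq:lus} holds for $(U_n)$ over $A$ if and only if it holds for $((U_n)_\fm)$ over $R$.

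No step is genuinely hard, but the point that needs care — and the reason Lemma~\ref{le:lcm} is the essential input rather than a direct translation of definitions — is that a finite free complex over $R$ need not be the localization of a graded complex over $A$. One therefore cannot certify lim Cohen--Macaulayness over $R$ simply by ``importing'' short complexes from $A$; instead one uses the reduction in Lemma~\ref{le:lcm} to a single system of parameters, which does come from $A$ and so can be verified there. Alternatively one could reprove part (1) by transcribing the proof of Lemma~\ref{le:lcm} into the graded category, running the truncation and splitting arguments with graded complexes throughout, but the localization route avoids repeating that work.
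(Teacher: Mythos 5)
Your proposal is correct and follows essentially the same route as the paper: record the transfer identities $\nu_A(U_n)=\nu_R((U_n)_\fm)$ and $e_d(U_n)=e_d((U_n)_\fm)$, observe that the localization map identifies $\HH i{F\otimes_AU_n}$ with $\HH i{F_\fm\otimes_R(U_n)_\fm}$ so short complexes in $\perf A$ localize to short complexes in $\perf R$, and then use Lemma~\ref{le:lcm} to go from a single homogeneous system of parameters back to all of $\perf R$. Your closing remark about why Lemma~\ref{le:lcm} is the genuine input (not every finite free complex over $R$ comes from $\perf A$) is precisely the point the paper's proof quietly relies on.
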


\begin{proof}
For any finitely generated graded $R$-module $M$ one has equalities
\[
\nu_A(M) = \nu_{R}(M_\fm) \qquad\text{and}\qquad e_d(M) = e_d(M_{\fm})\,.
\]
These facts will be used in the argument without further comment.

(2) Let $F$ be a short complex in $\perf A$. The $A$-modules $\HH i{F\otimes_A U_n}$ are $\fm$-torsion, so the natural localization map is an isomorphism:
\begin{equation}
\label{eq:graded-vs-local}
\HH i{F\otimes_AU_n} \xra{\ \cong\ } \HH i{F\otimes_AU_n}_\fm\cong \HH i{F_\fm \otimes_R (U_n)_\fm}\,.
\end{equation}
In particular the localization map $\HH iF\to \HH i{F_\fm}$ is an isomorphism, so that $F_\fm$ is a short complex in $\perf R$. It follows that if the sequence of $R$-modules $((U_n)_\fm)_{n\geqslant 0}$ is lim Cohen-Macaulay, then so is the sequence of $A$-modules $(U_n)_{n\geqslant 0}$. This settles the ``if" part in (2).

Let $\bsa$ be  a homogenous system of parameters for $A$; their images in $R$, which we also denote $\bsa$, are a system of parameters for $R$.
Given this fact and the isomorphisms \eqref{eq:graded-vs-local}, it follows that when \eqref{eq:lcm} holds for $(U_n)_{n\geqslant 0}$ and the Koszul complex $K(\bsa; A)$, then \eqref{eq:lcm} holds for  $((U_n)_\fm)_{n\geqslant 0}$ and the Koszul complex $K(\bsa;R)$. Then  Lemma~\ref{le:lcm} implies that the sequence $((U_n)_{\fm})_{n\geqslant 0}$ is lim Cohen-Macaulay. This settles the ``only if" part of (2) and, given the ``if" part, also (1).

(3) is now clear, given the equalities above.
\end{proof}
\end{chunk}

\section{Lim Ulrich sequences of sheaves}
\label{se:lim-Ulrich-sheaves}

Throughout this section $k$ is an infinite field, $A$ a standard graded $k$-algebra; see \ref{ch:graded-vs-local}. Set $d\colonequals \dim A$; we assume $d\ge 2$ to avoid  unimportant special cases. Set
\[
X\colonequals \proj A \qquad\text{and}\qquad c\colonequals \dim X = d-1\,.
\]
In this section we introduce lim Ulrich sequences of sheaves on $X$ and relate these to lim Ulrich sequences of $A$-modules. This is used to recover a construction, due to the second author, of lim Ulrich sequences on $A$, when $k$ has positive characteristic; see Theorem~\ref{th:ma-revisited}. We begin by recalling  some basic facts about sheaves on $X$.

\begin{chunk}
\label{ch:fac}
For any coherent sheaf $\calf$ on $X$ we set
\[
\varGamma_*(\calf) \colonequals \oplus_{t\in\bbZ} \CH 0X{\calf(t)}
\]
viewed as a graded $A$-module in the usual way. We shall be interested in coherent sheaves $\calf$ that satisfy the following condition:
\begin{equation}
\label{eq:depth-condition}
\CH 0X{\calf(t)}=0 \qquad \text{for $t\ll 0$}.
\end{equation}
This is equivalent to the condition that $\varGamma_*(\calf)$ is finitely generated as an $A$-module. The following alternative characterization that will be useful in the sequel:  When $\dim X\ge 1$,  a coherent sheaf $\calf$  on $X$ satisfies \eqref{eq:depth-condition} if and only if
\[
\depth_{\calo_{x}} \calf_x\ge 1\quad\text{for all closed points $x\in X$,}
\]
see \cite[Corollaire~1.4]{sga2:2005}. In the remainder of this section $\calf$ will be a sheaf satisfying condition~\eqref{eq:depth-condition}. Its \emph{Euler characteristic} is
\[
\chi(\calf)\colonequals \sum_i (-1)^i h^i(X,\calf) \qquad \text{where}\qquad h^i(X,\calf)\colonequals \rank_k \CH iX{\calf}\,.
\]
The \emph{Hilbert polynomial} of $\calf$ is a polynomial $h_{\calf}(z)$ with rational coefficients with
\[
h_{\calf}(t) = \chi(\calf(t)) \quad \text{for all $t\in \bbZ$.}
\]
This is a polynomial of degree at most $c$ and the coefficient in degree $c$ is $e_d(\varGamma_*(\calf))/c!$. Thus the $c$th difference of $h_{\calf}(t)$ is $e_d(\varGamma_*(\calf))$ so that
\[
e_d(\varGamma_*(\calf)) = \sum_{0\leqslant i,j\leqslant c} (-1)^{i+j} {\binom cj} h^i(X, \calf(t-j))\,.
\]
In particular since the righthand side is independent of $t$ we get
\begin{equation}
\label{eq:e-from-hilb}
e_d(\varGamma_*(\calf))= \sum_{0\leqslant i,j\leqslant c} (-1)^{i+j} {\binom cj} h^i(X, \calf(-j)) \,.
\end{equation}

\end{chunk}

\begin{chunk}
\label{ch:Usheaf}
An \emph{Ulrich sheaf} on $X$ is a nonzero coherent sheaf $\calf$ such that $h^i(X,\calf(t))=0$ except possibly when
\begin{equation}
\label{eq:u-range}
i=0 \text{ and } t\ge 0,\qquad\text{or}\qquad  i=c\text{ and } t\le -c-1\,.
\end{equation}
Sometimes it is convenient to view these conditions in terms of the total cohomology
\[
h^*(X,\calf(t))\colonequals \sum_ih^i(X,\calf(t)),
\]
and as function of $t$, namely: $\calf$ is Ulrich if and only if for each $t$ one has
\[
h^*(X,\calf(t)) =
\begin{cases}
h^0(X,\calf(t)) & \text{when $t\ge 0$}\\
h^c(X,\calf(t)) & \text{when $t\le -c-1$}\\
0 & \text{otherwise.}
\end{cases}
\]

When $X=\bbP^c_k$ a sheaf $\calf$ is Ulrich if and only if it is isomorphic to a direct sum of copies of $\calo_X$.   For a general $X$, let
\[
\pi\colon X\to \bbP^c_k
\]
be a finite \emph{linear} projection; the linearity condition is that $\pi^*\calo_{\bbP^c_k}(1)=\calo_X(1)$. Such a projection exists because $k$ is infinite. Then a coherent sheaf $\calf$ on $X$ is Ulrich precisely when $\pi_{*}\calf$ is an Ulrich sheaf on $\bbP^c_k$, that is to say, a direct sum of copies of $\calo_{\bbP^c_k}$. This is a direct consequence of the projection formula
\[
\CH iX{\calf(t)} \cong \CH i{\bbP^c_k}{\pi_*\calf(t)}\,.
\]
\end{chunk}

We propose the following definition.

\begin{chunk}
\label{ch:limUsheaves}
A \emph{lim Ulrich sequence} of sheaves on $X$ is a sequence $(\calf_n)_{n\geqslant 0}$ of coherent sheaves on $X$ for which the following properties hold:
\begin{enumerate}[\quad\rm(1)]
\item
$h^0(X,\calf_n)\ne 0$ for all $n\gg 0$;
\item
There exists an integer $t_0$ such that $h^0(X,\calf_n(t))=0$ for $t\le t_0$ and all $n$;
\item
There exists an integer $t_1$ such that $h^{\geqslant 1}(X,\calf_n(t))=0$ for $t\ge t_1$ and all $n$;
\item
Except possibly when $i=0$ and $t\ge 0$, or  $i=c$ and $t\le -c-1$,  one has
\[
\lim_{n\to\infty}\frac{h^i(X,\calf_n(t))}{h^0(X,\calf_n)}=0\,.
\]
\end{enumerate}
The range of values of $i$ and $t$ arising in (4) is precisely the one from \eqref{eq:u-range}.
\end{chunk}

The next result explains our interest in this notion; Lemma~\ref{le:lch-lcm} is a precursor.

\begin{theorem}
\label{th:usheaf-umodule}
Let $k$ be an infinite field, $A$ a standard graded $k$-algebra, and set $X\colonequals \proj A$. If $(\calf_n)_{n\geqslant 0}$ is a lim Ulrich sequence of sheaves on $X$,  then the sequence of graded $A$-modules $(\varGamma_*(\calf_n))_{n\geqslant 0}$ is  lim Ulrich.
\end{theorem}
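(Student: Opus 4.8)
The plan is to set $M_n\colonequals\varGamma_*(\calf_n)$ and verify the two conditions defining a lim Ulrich sequence. Condition~(2) of \ref{ch:limUsheaves} makes each $M_n$ a finitely generated graded $A$-module, and by~(1) it is nonzero for $n\gg0$ (we may ignore the finitely many exceptional $n$). First I would record the standard dictionary $\lch 0\fm{M_n}=\lch 1\fm{M_n}=0$ and $\lch j\fm{M_n}\cong\bigoplus_{t}\CH{j-1}X{\calf_n(t)}$ for $j\ge2$; the vanishing in low degrees holds because $M_n$ is the full module of sections of a coherent sheaf. Together with condition~(3) this shows that the $M_n$ are $(t_1+c)$-regular \emph{uniformly in }$n$, and since $(M_n)_t=\CH 0X{\calf_n(t)}=0$ for $t\le t_0$ by~(2), the minimal graded free resolution of $M_n$ is supported, in homological degree $p$, in a fixed internal-degree window $[\,t_0+1+p,\ t_1+c+p\,]$, independent of $n$.

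For the lim Cohen-Macaulay property I would invoke Proposition~\ref{pr:graded-vs-local}(1) to reduce to verifying \eqref{eq:lcm} for the Koszul complex $K\colonequals\kos{\bsa}A$ on a single homogeneous system of parameters $\bsa$. As $k$ is infinite, take $\bsa=a_1,\dots,a_d$ to be a \emph{linear} system of parameters; then $S\colonequals k[\bsa]\subseteq A$ is a polynomial subring over which $A$, and hence $M_n$, is module-finite, and $\pi\colon X\to\proj S=\bbP^c_k$ is a finite \emph{linear} projection, so $\mathcal{G}_n\colonequals\pi_*\calf_n$ satisfies $\CH iX{\calf_n(t)}=\CH i{\bbP^c_k}{\mathcal{G}_n(t)}$ for all $i,t$ and $\widetilde{M_n}=\mathcal{G}_n$. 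Since $\kos{\bsa}S$ is the Koszul resolution of $k$ over $S$, one has $\HH p{K\otimes_AM_n}=\HH p{\bsa;M_n}=\tor^S_p(k,M_n)$, whence $\HH p{K\otimes_AM_n}$ has $k$-dimension $\beta^S_p(M_n)$.

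The crux, and the step I expect to be the main obstacle, is bounding $\beta^S_p(M_n)$ for $p\ge1$. For this I would use the exact Koszul complex of sheaves
\[
0\lra\calo_{\bbP^c}(-d)\lra\bigwedge^{d-1}V\otimes\calo_{\bbP^c}(-d+1)\lra\cdots\lra V\otimes\calo_{\bbP^c}(-1)\lra\calo_{\bbP^c}\lra 0,\qquad V\colonequals S_1,
\]
which remains exact after $-\otimes\mathcal{G}_n(q)$ because it is a bounded, locally split complex of locally free sheaves. Its hypercohomology spectral sequence converges to $0$, has $E_1$-page $\bigwedge^pV\otimes\CH i{\bbP^c_k}{\mathcal{G}_n(q-p)}$, and identifies $E_2^{-p,0}$ with the Koszul cohomology group $\tor^S_p(k,M_n)_q$. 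The only differentials meeting $E^{-p,0}_\bullet$ are the incoming $d_r\colon E_r^{-p-r,\,r-1}\to E_r^{-p,0}$, so with $q=p+q'$ the identity $E_\infty^{-p,0}=0$ yields
\[
\dim_k\tor^S_p(k,M_n)_{p+q'}\ \le\ \sum_{r=2}^{d-p}\binom{d}{p+r}\,h^{r-1}(X,\calf_n(q'-r)).
\]
Every cohomological degree occurring on the right satisfies $1\le r-1\le c-1$, which is exactly the complement of the ``Ulrich range'' \eqref{eq:u-range}, so condition~(4) makes each term $o(h^0(X,\calf_n))$; summing over the finitely many relevant $p$, $q'$ (the window from uniform regularity), and $r$ gives $\beta^S_p(M_n)=o(h^0(X,\calf_n))$ for $1\le p\le d-2$, and the same spectral sequence forces $\tor^S_p(k,M_n)=0$ for $p\ge d-1$. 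A short independent computation — $(\fm M_n)_0\subseteq A_1\cdot(M_n)_{-1}$ together with condition~(4) applied to $h^0(X,\calf_n(-1))$ — gives $\nu_A(M_n)\ge(1-o(1))h^0(X,\calf_n)$. Dividing, \eqref{eq:lcm} holds for $K$, so $(M_n)_{n\ge0}$ is lim Cohen-Macaulay.

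It remains to verify \eqref{eq:lus}. Formula \eqref{eq:e-from-hilb} exhibits $e_d(M_n)=e_d(\varGamma_*(\calf_n))$ as $h^0(X,\calf_n)$ plus a fixed integer combination of terms $h^i(X,\calf_n(-j))$ with $(i,-j)\ne(0,0)$ and $0\le i,j\le c$, all lying outside the Ulrich range, so $e_d(M_n)/h^0(X,\calf_n)\to1$. Combined with $\nu_A(M_n)\ge(1-o(1))h^0(X,\calf_n)$ this gives $\limsup_n e_d(M_n)/\nu_A(M_n)\le1$, while the lim Cohen-Macaulay property and Lemma~\ref{eq:lcm-inequality} — applied to the localizations $(M_n)_\fm$ via Proposition~\ref{pr:graded-vs-local} — give $\liminf_n e_d(M_n)/\nu_A(M_n)\ge1$. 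Hence $e_d(M_n)/\nu_A(M_n)\to1$, and $(\varGamma_*(\calf_n))_{n\ge0}$ is lim Ulrich.
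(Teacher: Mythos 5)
Your proof is correct, but it takes a route noticeably different from the paper's. After the shared preliminaries (the local-cohomology/sheaf-cohomology dictionary, the estimate $\liminf\nu_A(M_n)/h^0(X,\calf_n)\ge 1$, the uniform vanishing window in internal degree, and the reduction via Proposition~\ref{pr:graded-vs-local}(1) to a single Koszul complex), the proofs diverge on the key step of controlling the higher Koszul homology. The paper works directly with the quasi-isomorphism $\kos{\bsa}{U_n}\simeq\kos{\bsa}{\mathrm R\Gamma_\fm(U_n)}$ and proves a graded analogue of Lemma~\ref{le:lim-zero} (stated as a Claim inside the proof) to reduce the Koszul homology in positive degrees to that of the top local cohomology module, which vanishes in those degrees; the argument stays entirely at the level of the derived category of graded $A$-modules and is structurally parallel to Lemma~\ref{le:lch-lcm}. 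You instead pass to a linear Noether normalization $S=k[\bsa]\subseteq A$, identify $\HH p{\bsa;M_n}$ with $\tor^S_p(k,M_n)$, and then bound these Tor modules internal degree by internal degree via the hypercohomology spectral sequence of the twisted sheaf-level Koszul complex on $\bbP^c_k$ tensored with $\pi_*\calf_n$; the boundary map analysis there yields the explicit estimate
\[
\dim_k\tor^S_p(k,M_n)_{p+q'}\le\sum_{r=2}^{d-p}\binom{d}{p+r}\,h^{r-1}\bigl(X,\calf_n(q'-r)\bigr),
\]
whose right side involves only cohomological indices $1\le r-1\le c-1$ --- always outside the Ulrich range --- and hence is $o(h^0(X,\calf_n))$. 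Your route is more computational and produces concrete binomial bounds on the graded Betti numbers $\beta^S_p(M_n)$, which the paper's qualitative lim-sup argument does not; the paper's route avoids the spectral sequence and the passage to $S$ altogether and is the shorter of the two. The closing steps --- using \eqref{eq:e-from-hilb} to show $e_d(M_n)/h^0(X,\calf_n)\to 1$ and then sandwiching $e_d(M_n)/\nu_A(M_n)$ between the $\nu$-estimate and Lemma~\ref{eq:lcm-inequality} --- are the same in both.

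One small remark: you correctly observe that $\lch 0\fm{M_n}=\lch 1\fm{M_n}=0$ because $M_n=\varGamma_*(\calf_n)$ is already saturated; this, together with condition (3), gives the uniform regularity bound $\operatorname{reg}_S(M_n)\le t_1+c$ that underlies your degree window. This matches the paper's equation \eqref{eq:koszul-bound}, which is derived instead by a spectral sequence / truncation argument from \eqref{eq:lch-vanishing}.
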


\begin{proof}
 Set $U_n\colonequals \varGamma_*(\calf_n)$, viewed as a graded $A$-module. We write  $U_{n,t}$ for its component in degree $t$.   Condition \ref{ch:limUsheaves}(2) implies that $U_{n,t}=0$ for $t\le t_0$. In particular the $A$-module $U_n$ is finitely generated; see the discussion below \eqref{eq:depth-condition}.

 The lim Ulrich condition on the sequence $(U_n)_{n\geqslant 0}$ involves $\nu_A(U_n)$, whereas the lim Ulrich condition on $(\calf_n)_{n\geqslant 0}$ is in terms of $h^0(X,\calf_n)$, that is to say,  $\rank_k(U_{n,0})$. We begin by comparing these numbers: One has
 \begin{align*}
\nu_A(U_n)
	&= \rank_k \mathrm{Coker}(A_1\otimes_k U_n\to U_n) \\
	&\ge \rank_k \mathrm{Coker}( A_1\otimes_k U_{n,-1} \to U_{n,0}) \\
	&\ge  h^0(X,\calf_n) -\rank_k(A_1) h^0(X,\calf_n(-1)) \,.
\end{align*}
Thus from  \ref{ch:limUsheaves}(4) one obtains
\begin{equation}
\label{eq:nu-estimate}
\liminf_{n\to\infty} \frac{\nu_A(U_n)}{h^0(X,\calf_n)}\ge 1\,.
\end{equation}
This observation will be of use later on. At the end we prove that this limit is $1$.

We verify that the sequence $(U_n)_{n\geqslant 0}$ is lim Cohen-Macaulay by mimicking the argument from Lemma~\ref{le:lch-lcm}. The advantage will be that since we are in the graded case, we can focus on one (internal) degree at a time. To that end recall that $\lch i{\fm}{U_n}$,  the local cohomology of $U_n$ at the homogeneous maximal ideal $\fm\colonequals A_{\geqslant 1}$ of $A$, acquires a grading from $U_n$. We write $\lch i{\fm}{U_n}_t$ for the component in degree $t$. This can be computed in terms of the sheaf cohomology of $\calf_n$ as follows:
\[
\lch i{\fm}{U_n}_t \cong
\begin{cases}
0 & i=0,1 \\
 \CH {i-1}X{\calf_n(t)} & i\ge 2\,.
\end{cases}
\]
See, for example, \cite[Corollary~6.2]{Kunz:2008}, noting that since $U_n=\varGamma_*(\calf_n)$, the sheafification of $U_n$ is $\calf_n$.   Combining this observation with condition \ref{ch:limUsheaves}(3) yields
\begin{equation}
\label{eq:lch-vanishing}
\lch i{\fm}{U_n}_t =0 \qquad\text{for $t\ge t_1$ and all $i$ and $n$,} \\
\end{equation}
whereas  \ref{ch:limUsheaves}(4) yields
\begin{equation}
\label{eq:lch-lim-vanishing}
\lim_{n\to \infty} \frac{\rank_k \lch i{\fm}{U_n}_t}{h^0(X,\calf_n)} =0 \qquad\text{for $i\ne d$ and all $t$.}
\end{equation}
At this point we have hypotheses akin to those in Lemma~\ref{le:lch-lcm}. In the rest of the proof it will be convenient to extend the use of notation~\ref{eq:h-notation} to the graded context: Given a complex $W$ of graded $A$-modules, set
\[
h_{i,t}(W)\colonequals \rank_k \HH iW_t\,.
\]
Fix a homogeneous system of parameters  $\bsa\colonequals a_1,\dots,a_d$  of $A$ and let $K$ be the Koszul complex on this sequence.  For any complex $X$ of graded $A$-modules we write $h_{i,t}(\bsa; W)$ for $h_{i,t}(K\otimes_A W)$.

We verify \eqref{eq:lcm} for $F=K$, which entails  $(U_n)_{n\geqslant 0}$ is lim Cohen-Macaulay; see Lemma~\ref{le:lcm}.   As in Lemma~\ref{le:lch-lcm} we use the quasi-isomorphism
\[
\kos{\bsa}{U_n}\simeq \kos{\bsa}{\mathrm{R}\Gamma_{\fm}(U_n)}\,,
\]
to transfer information we have on local cohomology modules to Koszul homology. For a start, given  $U_{n,t}=0$ for $t\le t_0$ and \eqref{eq:lch-vanishing}, for each $n$ and $i$ one has
\begin{equation}
\label{eq:koszul-bound}
h_{i,t}(\bsa; U_n) = 0 \qquad\text{for $t\le t_0$ and $t\ge t_1 + |\bsa|$,}
\end{equation}
where $|\bsa|\colonequals |a_1|+\dots+|a_d|$. This can be proved by a simple spectral sequence argument or through a claim akin to the one below,  analogous to Lemma~\ref{le:lim-zero}. Caveat: $W(n)$ is the $n$th term of a sequence and is not to be confused with a twist.

\begin{claim}
Let $(W(n))_{n\geqslant 0}$ be a sequence of $A$-complexes of graded modules, $(v_n)_{n\geqslant 0}$ a sequence of nonzero integers, and $s$  an integer such that
 \[
\lim_{n\to\infty} \frac{h_{i,t}(W(n))}{v_n}=0 \qquad\text{for all $i\ne s$ and all $t$.}
\]
Then for all $i$ and $t$ one has
\[
\limsup_{n\to \infty}  \frac{h_{i,t}(\bsa; W(n))}{v_n}= \limsup_{n\to \infty}  \frac{h_{i-s,t}(\bsa; \HH s{W(n)})}{v_n}
\]
\end{claim}
The proof is similar to that of Lemma~\ref{le:lim-zero} and is omitted.  Given \eqref{eq:lch-lim-vanishing} we can apply the claim with $W(n)=\mathrm{R}\Gamma_{\fm}(U_n)$ and $v_n = h^0(X,\calf_n)$, to obtain
\[
\lim_{n\to \infty} \frac{h_{i,t}(\bsa; U_n)}{h^0(X,\calf_n)} = \lim_{n\to \infty} \frac{h_{i+d,t}(\bsa; \lch d{\fm}{U_n})}{h^0(X,\calf_n)} = 0
\quad\text{for $ i\ge 1$ and all $t$.}
\]
By \eqref{eq:koszul-bound} the Koszul homology modules are zero outside the range $t_0 <  t < t_1 + |\bsa|$, so the preceding computation yields
\[
\lim_{n\to \infty} \frac{h_{i}(\bsa; U_n)}{h^0(X,\calf_n)} = 0 \qquad\text{for $ i\ge 1$.}
\]
Combining this with \eqref{eq:nu-estimate} we deduce
\[
\lim_{n\to \infty} \frac{h_{i}(\bsa; U_n)}{\nu_A(U_n)} =0 \qquad\text{for $ i\ge 1$.}
\]
Thus the sequence $(U_n)_{n\geqslant 0}$ is lim Cohen-Macaulay as claimed.

To verify that the sequence is lim Ulrich, it remains to verify \eqref{eq:lus}.  Applying \eqref{eq:e-from-hilb} to each $U_n$ and applying property \ref{ch:limUsheaves}(4) yields
\[
\lim_{n\to \infty} \frac{e_d(U_n)}{h^0(X,\calf_n)} = \sum_{0\leqslant i,j\leqslant c} \lim_{n\to \infty} (-1)^{i+j} {\binom cj} \frac{h^i(X, \calf_n(-j))}{h^0(X, \calf_n)} =1\,.
\]
Recalling \eqref{eq:nu-estimate} we obtain
\[
\limsup_{n\to \infty} \frac{e_d(U_n)}{\nu_A(U_n)} \le 1\,.
\]
Combining this inequality with \eqref{eq:lcm-inequality} we deduce that the limit above equals $1$, as desired.
This completes the proof.
\end{proof}

\begin{chunk}
\label{ch:lim-CM-sheaves}
In the preceding proof we first verified that the sequence $(\varGamma_*(\calf_n))_{n\geqslant 0}$ is lim Cohen-Macaulay using only conditions (1)--(3) in \ref{ch:limUsheaves} and the following part of condition (4):  For $1\le i\le c-1$ and each $t$, and for $i=0$ and $t=-1$, one has
\[
\lim_{n\to\infty}\frac{h^i(X,\calf_n(t))}{h^0(X,\calf_n)}=0\,.
\]
One might thus call such a sequence $(\calf_n)_{n\geqslant 0}$ a lim Cohen-Macaulay sequence.
\end{chunk}

\section{Graded rings in positive characteristic}
\label{se:Ma}
In this section we use Theorem~\ref{th:usheaf-umodule} to recover and extend  results of the second author~\cite[Theorem~3.5]{Ma:2020} on the existence of lim Ulrich sequences over standard graded rings defined over a field of positive characteristic.

\begin{theorem}
\label{th:ma-convergence}
Let $k$ be a field of positive characteristic, infinite and perfect, and $R$ the localization of a standard graded $k$-algebra at its homogenous maximal ideal.
There exists a lim Ulrich sequence of $R$-modules $(U_n)_{n\geqslant 0}$ such that
\[
\lim_{n\to\infty} \frac{[U_n]}{\nu_R(U_n)} = \frac{[R]_{d}}{e(R)}
\]
in $\euler R$, for $d\colonequals \dim R$.
\end{theorem}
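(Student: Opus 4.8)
The plan is to realize the modules $U_n$ as global sections of a lim Ulrich sequence of sheaves on $X\colonequals\proj A$, and then to identify $\lim_n [U_n]/\nu_R(U_n)$ by means of the Frobenius eigenspace decomposition. Write $R=A_\fm$ with $A$ standard graded over the infinite perfect field $k$ of characteristic $p$ and $\fm=A_{\geqslant 1}$; set $d=\dim A=\dim R$, so $X$ has dimension $d-1$. Because $k$ is perfect, $A$ and hence $R$ are $F$-finite, so $R$ is of the kind considered in \ref{ch:frobenius}: the decomposition ${\grog R}_{\bbR}=\bigoplus_{i=0}^{d}{\grog R}_{\bbR}^{(i)}$ and the isomorphism ${\grog R}_{\bbR}^{(d)}\xra{\ \cong\ }\bbR^{\#\Lambda(R)}$ of Lemma~\ref{le:Feigen} are available, as is the identity \eqref{eq:rank2} and Lemma~\ref{le:frobenius-limit}.

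First I would construct a lim Ulrich sequence of coherent sheaves $(\calf_n)_{n\geqslant 0}$ on $X$ in the sense of \ref{ch:limUsheaves}, following a geometric recasting of the second author's method: start from a fixed coherent sheaf with nonzero global sections --- for instance a twist of $\calo_X$ --- apply a high iterated Frobenius pushforward, and then pass to an appropriate twist, subsheaf or quotient so as to force the cohomological conditions \ref{ch:limUsheaves}(1)--(4). Given such a sequence, Theorem~\ref{th:usheaf-umodule} shows that the graded $A$-modules $\varGamma_*(\calf_n)$ form a lim Ulrich sequence, and Proposition~\ref{pr:graded-vs-local}(3) then shows that the localizations $U_n\colonequals\varGamma_*(\calf_n)_\fm$ form a lim Ulrich sequence of $R$-modules. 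Conditions \ref{ch:limUsheaves}(1)--(2) and the estimate \eqref{eq:nu-estimate} give $\nu_R(U_n)$ comparable to $h^0(X,\calf_n)$, and in particular $\nu_R(U_n)\to\infty$.

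It remains to show $[U_n]/\nu_R(U_n)\to[R]_d/e(R)$ in ${\grog R}_{\bbR}$; convergence in $\euler R$ then follows since the quotient map is continuous (\ref{ch:grop-topology}). Writing $[U_n]=\sum_{i=0}^{d}[U_n]_i$, I would extract from the construction two facts. First, $\calf_n$ has the \emph{same} generic rank along every $(d-1)$-dimensional irreducible component of $X$ --- this is precisely where the Frobenius-pushforward origin is used, since the absolute Frobenius of a reduced variety of dimension $d-1$ over a perfect field is generically finite of degree $p^{d-1}$ on \emph{each} component --- so $U_n$ satisfies \eqref{eq:rank} and hence $[U_n]_d=\frac{e_d(U_n)}{e(R)}[R]_d$ by \eqref{eq:rank2}. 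Second, the lower components are negligible: $[U_n]_i/\nu_R(U_n)\to 0$ for $i<d$, because applying the $n$-th iterated Frobenius scales the weight-$i$ part of a class by $p^{a_n i}$ (where $p^{a_n}$ is its degree) while it scales $\nu_R$ by $p^{a_n d}$ --- the normalized class concentrates in the top Frobenius weight, which is exactly the content of Lemma~\ref{le:frobenius-limit}. Together these give, in ${\grog R}_{\bbR}$,
\[
\frac{[U_n]}{\nu_R(U_n)} \;=\; \sum_{i<d}\frac{[U_n]_i}{\nu_R(U_n)} \;+\; \frac{e_d(U_n)}{e(R)\,\nu_R(U_n)}\,[R]_d \;\longrightarrow\; \frac{1}{e(R)}\,[R]_d\,,
\]
the last limit because $\lim_n e_d(U_n)/\nu_R(U_n)=1$ by the lim Ulrich condition \eqref{eq:lus}.

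I expect the main obstacle to be the rank statement --- that the sheaves delivered by the construction really do have constant generic rank along the top-dimensional components of $X$ (or at least do so asymptotically after normalizing by $h^0(X,\calf_n)$) --- together with the bookkeeping it demands: one must check that the cohomology-killing modifications needed for \ref{ch:limUsheaves}(1)--(4) alter $[U_n]$ only by terms that are $o(\nu_R(U_n))$, so that neither fact above is spoiled. That a hypothesis of this shape is genuinely needed is clear: an arbitrary lim Ulrich sequence --- for instance the constant sequence on a single Ulrich module whose class has unequal ranks along the components of $\spec R$ --- is lim Ulrich but converges to a different point of $\euler R$. Everything else (the passage from sheaves to graded modules to local modules, and the eigenspace arithmetic) is routine given the results already established.
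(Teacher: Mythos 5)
Your high-level outline matches the paper's: construct a lim Ulrich sequence of sheaves on $X=\proj A$, descend via Theorem~\ref{th:usheaf-umodule} and Proposition~\ref{pr:graded-vs-local}(3) to a lim Ulrich sequence of $R$-modules $U_n$, and then identify the limit by decomposing $[U_n]$ into Frobenius eigencomponents. But the place where you yourself locate the obstacle --- controlling $[U_n]$ well enough to see that it concentrates in weight $d$ --- is precisely where your proposal has a real gap, and the paper resolves it with a mechanism you do not invoke.

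The paper does not try to show that $U_n$ satisfies \eqref{eq:rank} or to estimate $[U_n]_i$ directly. Instead it exhibits an exact identity in ${\grog R}_{\bbR}$ reducing the whole problem to a \emph{single fixed module} pushed forward by Frobenius. Concretely, with $\calu_n=\vf^n_*(\calm\otimes\pi^*\rho_*\call_n)$ and $\gamma\colon {\grog X}_{\bbR}\to{\grog R}_{\bbR}$ the map of \ref{ch:gamma}, the two compatibilities in Lemma~\ref{le:gamma} --- $\vf_*\circ\gamma=p\cdot\gamma\circ\vf_*$, and $\gamma(\alpha\cap\pi^*\beta)=\gamma(\alpha)\cdot\rank\beta$ --- give
\[
p^{n}\,[U_n]\;=\;c!\,[\vf_*^{n}(M_\fm)]\qquad\text{in }{\grog R}_{\bbR}\,,
\]
where $M=\varGamma_*(\calm)$ is fixed. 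Once you have this, $[U_n]/e_d(U_n)=[\vf_*^n(M_\fm)]/e_d(\vf_*^n(M_\fm))$, and Lemma~\ref{le:frobenius-limit} applied to the \emph{fixed} module $M_\fm$ finishes the limit; no asymptotic estimate on lower-weight pieces of a varying sequence is needed. Your argument that ``the $n$-th iterated Frobenius scales the weight-$i$ part of a class by $p^{a_n i}$'' is exactly the content of Lemma~\ref{le:frobenius-limit} for $\vf_*^n$ of a fixed class, so you are implicitly assuming the identity above; making that assumption explicit and proving it (via $\gamma$ and the projection formulas) is the heart of the matter and is absent from your proposal. Likewise your worry about ``cohomology-killing modifications'' does not arise: the construction of $\calu_n$ involves no such modification, and the only correction --- twisting by $\pi^*\rho_*\call_n$ --- is controlled exactly (not merely up to $o(\nu_R(U_n))$) because $\rho_*\call_n$ has constant rank $c!$.

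A secondary but genuine issue is your choice of starting sheaf. ``A twist of $\calo_X$'' need not satisfy the depth condition \eqref{eq:depth-condition}/\ref{ch:calm}(1) when $A$ has embedded primes away from $\fm$, and it is not what makes \eqref{eq:rank} hold for the module you feed into Lemma~\ref{le:frobenius-limit}. The paper instead sheafifies $\bigoplus_{\fp\in\Lambda(A)}(A/\fp)^{\length(A_\fp)}$; this manifestly has positive depth at closed points, has positive rank under $\pi_*$, and produces $M_\fm$ with $\length_{R_\fp}(M_\fm)_\fp=\length(R_\fp)$ for each $\fp\in\Lambda(R)$, i.e.\ \eqref{eq:rank} with $m=1$. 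That is the hypothesis Lemma~\ref{le:frobenius-limit} actually needs, and it must be arranged for the fixed module $M_\fm$, not for the varying $U_n$.

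Your closing remark --- that some rank-constancy hypothesis is genuinely needed, because a lim Ulrich sequence in general need not converge to $[R]_d/e(R)$ --- is correct and worth keeping in mind; it explains why the theorem pins down $[R]_d$ only for this particular construction.
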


The proof of this result is given towards the end of this section.  One of the key steps in it is the construction of a lim Ulrich sequence of sheaves over the projective variety defined by the graded ring in question; see Theorem~\ref{th:ma-revisited}. For now we record the following consequence for Dutta multiplicities, recalled in \ref{ch:Dutta}.

\begin{corollary}
\label{co:chi-infinity}
Let $k$ be a field of positive characteristic and $R$ the localization of a standard graded $k$-algebra at its homogenous maximal ideal. Any short complex $F$ in $\perf R$  satisfies
\[
\binom {\dim R}i \chi_{\infty}(F) \ge \beta^R_i(F) e(R)\qquad\text{for each $i$.}
\]
\end{corollary}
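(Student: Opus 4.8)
The plan is to reduce Corollary~\ref{co:chi-infinity} to the existence of a suitable lim Ulrich point in $\euler R$ and then invoke the machinery already in place, namely Theorem~\ref{th:lus} together with the formula $\chi_\infty(F) = \pair{F}{[R]_d}$ from \eqref{eq:duttam}. Concretely, by Corollary~\ref{co:lus}, it suffices to show that the class $[R]_d$ in $\euler R$ is a lim Ulrich point, since that corollary directly yields the analogues of \eqref{eq:euler} for $\chi_\infty(F)$.

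First I would perform the standard reductions recorded in \ref{ch:gonflament} and \ref{ch:Dutta}: since $\beta^R_i(F)$, $e(R)$, and $\chi_\infty(F)$ are all unchanged under a flat local base change with artinian closed fiber that realizes a residue field extension, and lim Ulrich points persist under such extensions when $\fm S = \fn$ (see \ref{ch:limU-point}), we may assume the residue field $k$ is infinite and perfect. After completing we are in the setting of \ref{ch:frobenius}, so the eigenspace decomposition of Lemma~\ref{le:Feigen} and the component $[R]_d$ are available.

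The heart of the argument is then Theorem~\ref{th:ma-convergence}: over such a ring $R$ there exists a lim Ulrich sequence of $R$-modules $(U_n)_{n\geqslant 0}$ with
\[
\lim_{n\to\infty} \frac{[U_n]}{\nu_R(U_n)} = \frac{[R]_d}{e(R)}
\]
in $\euler R$. Since $\euler R$ is finite dimensional for the rings under consideration (Kurano's theorem, cited in \ref{ch:pairing}), this limit exists as an honest element of $\euler R$, and by definition \ref{ch:limU-point} it is a lim Ulrich point. Because the lim Ulrich points form a cone closed under multiplication by positive reals and $e(R) > 0$, it follows that $[R]_d$ itself is a lim Ulrich point. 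Applying the second assertion of Corollary~\ref{co:lus} now gives
\[
\binom{\dim R}{i} \chi_\infty(F) \ge \beta^R_i(F)\, e(R)
\]
for every short complex $F$ in $\perf R$ and every $i$, which is the claim; the final inequality $\chi_\infty(F) \ge e(R)$ is the case $i=0$, using $\beta^R_0(F)\ge 1$ since $F$ is nonzero.

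The substance of the corollary is therefore entirely carried by Theorem~\ref{th:ma-convergence} and Corollary~\ref{co:lus}; the main obstacle is nothing new here but rather the construction underlying the former, i.e.\ producing the lim Ulrich sequence of sheaves on $\proj A$ (Theorem~\ref{th:ma-revisited}) whose associated modules converge to $[R]_d/e(R)$ rather than to $[R]/e(R)$ --- this discrepancy, driven by the Frobenius action, is exactly why one obtains a bound on the Dutta multiplicity $\chi_\infty$ and not on the ordinary Euler characteristic. Given that theorem, the present corollary is a short formal deduction.
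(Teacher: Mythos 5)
Your proposal is correct and follows exactly the route the paper takes: reduce via flat base change to the case of a suitably large residue field, invoke Theorem~\ref{th:ma-convergence} to see that $[R]_d$ is a lim Ulrich point (you spell out the positive-cone rescaling step that the paper leaves implicit), and then apply Corollary~\ref{co:lus} together with $\chi_\infty(F)=\pair{F}{[R]_d}$. The only cosmetic difference is that the paper assumes $k$ algebraically closed while you only need it infinite and perfect, which is what Theorem~\ref{th:ma-convergence} actually requires.
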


\begin{proof}
We can assume $k$ is algebraically closed; see \ref{ch:gonflament} and \ref{ch:Dutta}.  Then $[R]_d$ is a lim Ulrich point in $\euler R$, by Theorem~\ref{th:ma-convergence}, so Corollary~\ref{co:lus} applies.
\end{proof}

\begin{chunk}
\label{ch:chi-graded}
Let $k$ be as in Theorem~\ref{th:ma-convergence} and $A$ a standard graded $k$-algebra. If $F$ is a short complex of finite free graded $A$-modules, with $\hh F$ of finite length, then
\[
\chi(F) = \chi_{\infty}(F) = \chi_{\infty}(F_\fm)
\]
where the first equality can be proved along the lines of that of  \cite[Theorem~2.3]{McDonnell:2011}. The second equality is a simple verification. Thus Corollary~\ref{co:chi-infinity} yields lower bounds on $\chi(F)$.
\end{chunk}

\subsection*{Cohomology on $(\bbP_k^1)^c$}
Let $k$ be a field (with no further restrictions), $c$ a positive integer, and consider the $c$-fold product of the projective line:
\[
Z\colonequals (\bbP_k^1)^c\,.
\]
We will need to compute the cohomology of certain sheaves on $Z$. To that end we recollect some basic results in this topic;  for further details see, for example, \cite{Cox:1995}.

Let $k[\bsx,\bsy] \colonequals k[x_1,y_1;\dots; x_c,y_c]$ be the $\bbZ^c$-graded ring where  $x_i$ and $y_i$ are of multidegree $(0,\dots,1,\dots 0)$, where $1$ is in the $i$th spot. Each multigraded $k[\bsx,\bsy]$-module determines a coherent sheaf on $Z$. In particular for any $c$-tuple $\bsa\colonequals (a_1,\dots,a_c)$ there is a coherent sheaf $\calo_Z(\bsa)$ on $Z$ associated to the $k[\bsx,\bsy]$-module $k[\bsx,\bsy](\bsa)$. The functor assigning finitely generated multigraded modules to coherent sheaves on $Z$ is exact, and every coherent sheaf arises via this construction.  Each finitely generated multigraded $k[\bsx,\bsy]$-module admits a resolution of length at most $2c$ by direct sums of twists of $k[\bsx,\bsy]$. Thus for each coherent sheaf $\caln$ there is an exact sequence of the form
\[
0\lra \calf_{2c}\lra \cdots \lra \calf_0\lra \caln \lra 0\,,
\]
where each $\calf_i$ is a direct sum of copies of $\calo_Z(\bsa)$ for various choices of $c$-tuples $\bsa$. The cohomology of $\calo_Z(\bsa)$ is determined by the isomorphism
\begin{equation}
\label{eq:Z-cohomology}
\CH *Z{\calo_Z(\bsa)} \cong \CH *{\bbP_k^1}{\calo_{\bbP_k^1}(a_1)}\otimes_k\cdots\otimes_k  \CH *{\bbP_k^1}{\calo_{\bbP^1}(a_c)}
\end{equation}
of graded $k$-vector spaces and the computation
\[
h^i(\bbP_k^1, \calo_{\bbP_k^1}(a)) =
\begin{cases}
a+1 & \text{if $i=0$ and $a\ge 0$}\\
-a-1 & \text{if $i=1$ and $a\le -2$} \\
0 & \text{otherwise.}
\end{cases}
\]
Therefore for any integer $a$ one gets
\begin{equation}
\label{eq:Z-cohomology2}
h^*(\bbP_k^1, \calo_{\bbP^1}(a)) = |a+1|\,.
\end{equation}
These computations lead in the usual way to the following analog of the Serre vanishing theorem for projective spaces. In what follows given $\bsa\colonequals (a_1,\dots,a_c)$ and $\bsb=(b_1,\dots,b_c)$ in $\bbZ^c$ we write $\bsb\ge \bsa$ to indicate that $b_i\ge a_i$ for each $i$ and write $\bsa\gg 0$ to indicate that each $a_i\gg 0$ for each $i$.

\begin{lemma}
\label{le:Z-SerreVanishing}
For any coherent sheaf $\caln$ on $Z$ one has
\[
\CH iZ{\caln(\bsa)} =0 \quad\text{for $\bsa\gg 0$ and $i\ge 1$.}
\]
\end{lemma}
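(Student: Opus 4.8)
The plan is to reduce the statement for an arbitrary coherent sheaf $\caln$ to the case of the line bundles $\calo_Z(\bsb)$, for which the vanishing follows from the explicit cohomology computation already recorded. The key tool is the resolution
\[
0\lra \calf_{2c}\lra \cdots \lra \calf_0\lra \caln \lra 0
\]
with each $\calf_j$ a finite direct sum of sheaves of the form $\calo_Z(\bsb)$, whose existence was noted just before the statement (it comes from the fact that every finitely generated multigraded $k[\bsx,\bsy]$-module has a free resolution of length at most $2c$).

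First I would twist this resolution by $\calo_Z(\bsa)$; since twisting by a line bundle is exact, one obtains an exact sequence of the same shape with $\caln$ replaced by $\caln(\bsa)$ and each $\calf_j$ replaced by $\calf_j(\bsa)$, which is again a finite direct sum of sheaves $\calo_Z(\bsb+\bsa)$ for finitely many fixed $\bsb$'s. Next, break the length-$2c$ resolution into short exact sequences in the usual way, introducing the syzygy sheaves $\calk_j$, and run the long exact sequence in cohomology. A standard diagram chase (descending induction on the spot in the resolution, or iterating the connecting maps) shows that $\CH iZ{\caln(\bsa)}$ is sandwiched between cohomology groups $\CH {i'}Z{\calf_j(\bsa)}$ with $i'\ge 1$; concretely, $\CH iZ{\caln(\bsa)}$ vanishes as soon as $\CH {i+j}Z{\calf_j(\bsa)}=0$ for all $0\le j\le 2c$ and all $i\ge 1$. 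So it suffices to prove the vanishing for each $\calo_Z(\bsb+\bsa)$, hence (absorbing the fixed finite set of $\bsb$'s) for $\calo_Z(\bsa)$ with $\bsa\gg 0$.

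For the line bundle case I would invoke the Künneth-type isomorphism~\eqref{eq:Z-cohomology} together with~\eqref{eq:Z-cohomology2}: for $i\ge 1$, the group $\CH iZ{\calo_Z(\bsa)}$ is a sum of tensor products $\CH {i_1}{\bbP^1_k}{\calo(a_1)}\otimes_k\cdots\otimes_k\CH {i_c}{\bbP^1_k}{\calo(a_c)}$ with $i_1+\cdots+i_c=i\ge 1$, so at least one $i_\ell\ge 1$; but $\CH 1{\bbP^1_k}{\calo(a_\ell)}=0$ once $a_\ell\ge -1$, which holds for $a_\ell\gg 0$. Hence every such summand vanishes, and so does $\CH iZ{\calo_Z(\bsa)}$ for $i\ge 1$ and $\bsa\gg 0$.

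I expect the only mildly technical point to be the bookkeeping in the second step --- tracking that only finitely many twists $\bsb$ occur among the $\calf_j$, so that a single threshold $\bsa\gg 0$ works simultaneously for all of them, and that the connecting-map argument genuinely lands one in the range $i\ge 1$ rather than spilling into $\CH 0Z{-}$ (it does not, because the chase only ever raises the cohomological degree). Everything else is the familiar ``Serre vanishing via a resolution by line bundles'' argument, transported verbatim to the multigraded setting of $(\bbP^1_k)^c$.
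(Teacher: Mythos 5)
Your proposal is correct and follows essentially the same route as the paper: reduce to the case of line bundles $\calo_Z(\bsb)$ via the finite resolution of $\caln$ by direct sums of such bundles, then verify the line bundle case directly from the K\"unneth isomorphism~\eqref{eq:Z-cohomology} and the explicit cohomology of $\calo_{\bbP^1_k}(a)$. The paper's proof is terse and points to Lemma~\ref{le:lim-zero} only as an indication of the bookkeeping over the resolution; your descending-induction/dimension-shift chase is simply that same bookkeeping spelled out explicitly, and your observation that only finitely many twists $\bsb$ occur (so a single threshold for $\bsa$ suffices) is exactly the point one needs to make the reduction rigorous.
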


\begin{proof}
It follows from the computations above that the desired result holds for sheaves of the form $\calo(\bsa)$. The general case follows since any coherent sheaf  $\caln$ has a finite resolution by direct sum of sheaves of this form; see Lemma~\ref{le:lim-zero}.
\end{proof}

\begin{chunk}
\label{ch:Z-hilb}
The function $\hilb_{\caln}$ defined by
\[
\hilb_{\caln}(\bsa) \colonequals \sum_i (-1)^i h^i(Z, \caln(\bsa)) \qquad\text{for $\bsa\in\bbZ^c$,}
\]
is a polynomial in $c$-variables, called the \emph{Hilbert polynomial} of $\caln$. It has the form
\[
\hilb_{\caln}(z_1,\dots,z_c) = \rank(\caln) z_1\cdots z_c + \text{lower degree terms}.
\]
This can be deduced from the fact that  $\caln$ has a finite resolution consisting of twists of $\calo$ and the following computation
\[
\hilb_{\calo(\bsa)}(z_1,\dots,z_c) = \prod_i (z_i+a_i+1)\,,
\]
which is immediate from \eqref{eq:Z-cohomology}. Moreover, by Serre vanishing one has
\[
\hilb_{\caln}(\bsa) = h^0(Z,\caln(\bsa))  \qquad\text{for $\bsa\gg 0$.}
\]
\end{chunk}

Given an integer $t$ and a positive integer $p$  set
\begin{equation}
\label{eq:bnt}
\bsb_n(t) \colonequals ((t+1)p^n,\dots, (t+c)p^n) \qquad \text{for $n\ge 0$}\,.
\end{equation}
In what follows the asymptotic behavior $\bsb_n(t)$ with respect to $t$ and $n$ will be important. Observe that given any $\bsa\in\bbZ$, there exists an integer $t_0\le 0$ such that $\bsb_n(t)\le \bsa$ for all $t\le t_0$ and all $n$. In the same vein, there exists an integer $t_1\ge 1$ such that $\bsb_n(t)\ge \bsa$ for all $t\ge t_1$ and all $n$. Moreover $\bsb_n(t)\ge \bsa$ for all $t\ge 0$ and $n\gg 0$ and $\bsb_n(t)\le \bsa$ for all $t\le -c-1$ and $n\gg 0$.

The next two results are intended for use in the proof of Theorem~\ref{th:ma-revisited}, which may explain their odd appearance.
Figure~\ref{fig:proof} is a visual guide to the statements. Parts (1), (2), and (3) correspond to regions A, B, and C, respectively.

\begin{figure}[htbp]
\caption[top]{Visual guide to Lemmas~\ref{le:oddZ1} and \ref{le:oddZ2}.}
\label{fig:proof}
\vspace*{-2cm}
\includegraphics[scale=0.95]{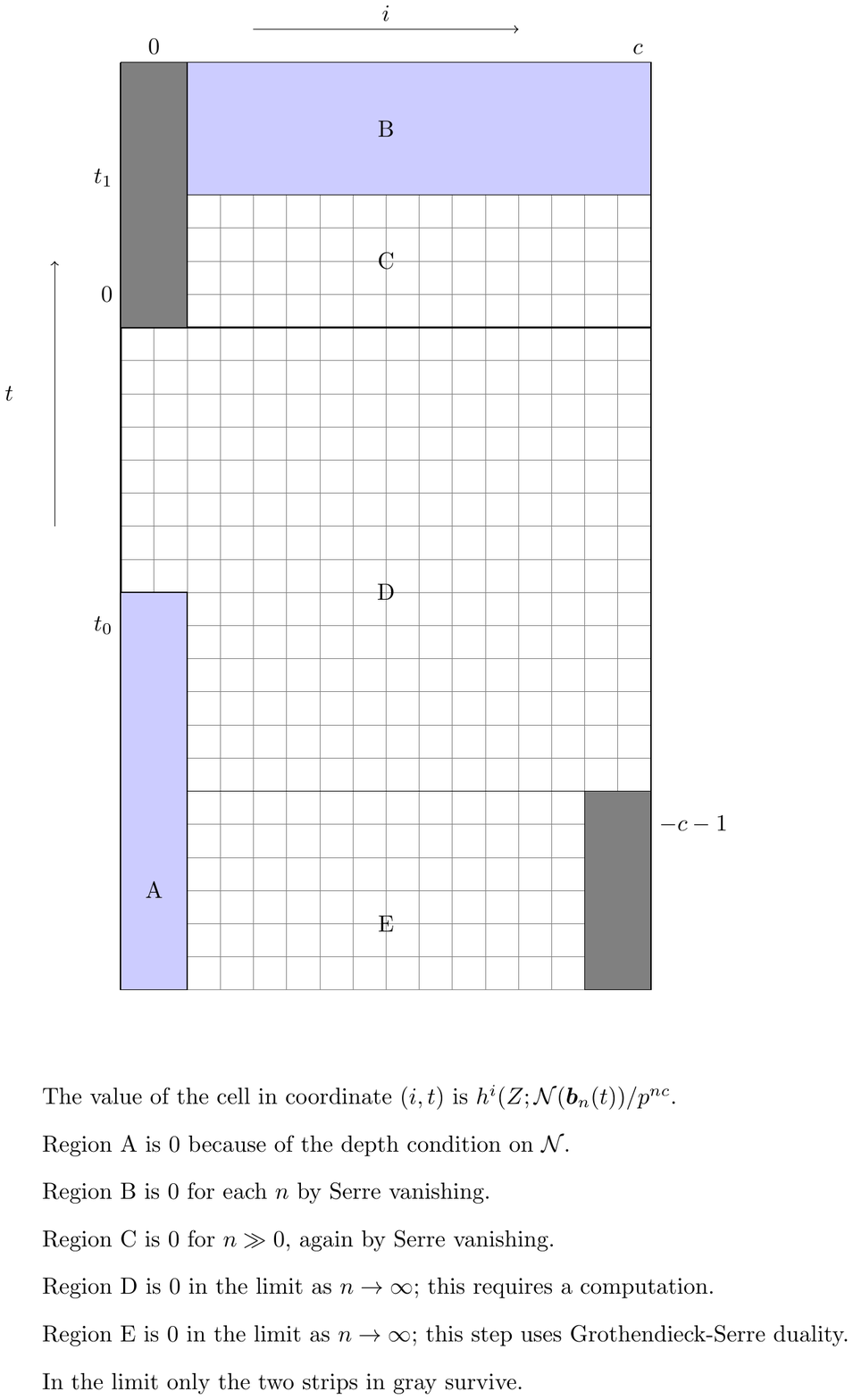}
\label{fig:proof2}
\end{figure}

\begin{lemma}
\label{le:oddZ1}
Let $\caln$ be a coherent sheaf on $Z$ with $\CH 0Z{\caln(\bsa)}=0$ for $\bsa\ll 0$.
\begin{enumerate}[\quad\rm(1)]
\item
There exists an integer $t_0$ such that for $\bsb_n(t)$ as in \eqref{eq:bnt} one has
\[
\CH 0Z{\caln(\bsb_n(t))}=0\quad\text{for $t\le t_0$ and  $n\ge 0$.}
\]
\item
There exists an integer $t_1$ such that
\[
\CH iZ{\caln(\bsb_n(t))}=0 \quad\text{for $t\ge t_1$, $i\ge 1$, and  $n\ge 0$.}
\]
\item
One has
\[
\CH iZ{\caln(\bsb_n(t))}=0 \quad\text{for $t\ge 0$, $i\ge 1$, and  $n\gg 0$.}
\]
\end{enumerate}
\end{lemma}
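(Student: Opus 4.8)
The plan is to obtain all three assertions from the elementary comparisons between the multidegrees $\bsb_n(t)$ and a fixed $\bsa \in \bbZ^c$ recorded just before the statement, together with the hypothesis on $\CH 0Z{-}$ and the Serre-type vanishing of Lemma~\ref{le:Z-SerreVanishing}; no new cohomological input is needed. Concretely, Lemma~\ref{le:Z-SerreVanishing} supplies an $\bsa' \in \bbZ^c$ with $\CH iZ{\caln(\bsa)} = 0$ for all $\bsa \ge \bsa'$ and all $i \ge 1$, while the hypothesis supplies (see below) an $\bsa^* \in \bbZ^c$ with $\CH 0Z{\caln(\bsa)} = 0$ for all $\bsa \le \bsa^*$. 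For part (1) I would apply the observation ``given $\bsa$, there is $t_0 \le 0$ with $\bsb_n(t) \le \bsa$ for $t \le t_0$ and $n \ge 0$'' with $\bsa = \bsa^*$. For part (2) I would apply ``there is $t_1 \ge 1$ with $\bsb_n(t) \ge \bsa$ for $t \ge t_1$ and $n \ge 0$'' with $\bsa = \bsa'$. For part (3) I would reuse $\bsa'$ and the observation ``$\bsb_n(t) \ge \bsa$ for $t \ge 0$ and $n \gg 0$'', noting that since every coordinate of $\bsb_n(t)$ is at least $p^n$ once $t \ge 0$, the vanishing in fact holds uniformly in $t \ge 0$ as soon as $p^n$ exceeds every coordinate of $\bsa'$.

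The only step I expect to require a word of care — and hence the main obstacle, modest as it is — is justifying the displayed form of the $\CH 0Z{-}$ hypothesis, i.e.\ that ``$\CH 0Z{\caln(\bsa)} = 0$ for $\bsa \ll 0$'' yields vanishing on a genuine downward cone $\{\bsa \le \bsa^*\}$ rather than merely along the antidiagonal $\bsa \to -\infty$. This is where finite generation enters: by (the multigraded analogue of) the discussion around \eqref{eq:depth-condition}, the hypothesis makes $\bigoplus_{\bsa \in \bbZ^c} \CH 0Z{\caln(\bsa)}$ a finitely generated $\bbZ^c$-graded $k[\bsx,\bsy]$-module, so fixing homogeneous generators in multidegrees $\bsd^{(1)}, \dots, \bsd^{(m)}$ one has $\CH 0Z{\caln(\bsa)} = 0$ unless $\bsa \ge \bsd^{(j)}$ for some $j$; then $\bsa^*$ with $i$th coordinate $\min_j \bsd^{(j)}_i - 1$ does the job. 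Granting this, parts (1)--(3) are pure bookkeeping with the inequalities defining $\bsb_n(t)$, which is presumably the reason for stating the lemma in exactly this packaged form — it is tailored to feed directly into the proof of Theorem~\ref{th:ma-revisited}.
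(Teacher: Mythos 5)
Your proposal is correct and follows essentially the same route as the paper's proof: parts (1)--(3) are read off from Serre vanishing (Lemma~\ref{le:Z-SerreVanishing}) and the hypothesis on $\CH 0Z{-}$, combined with the elementary comparisons between $\bsb_n(t)$ and a fixed multidegree recorded just before the lemma. The extra justification you give for why ``$\CH 0Z{\caln(\bsa)}=0$ for $\bsa\ll 0$'' means vanishing on a full downward cone is sound, though the paper simply takes that reading as the meaning of the notation and doesn't need the finite-generation detour.
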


\begin{proof}
(1)  By hypothesis, there exists $\bsa\in\bbZ^c$ such that  $\CH 0Z{\caln(\bsa'))}=0$ for all $\bsa'\le \bsa$. Choose  $t_0$ such that $\bsb_n(t)\le \bsa$ for all $t\le t_0$ and all $n\ge 0$, so that
\[
\CH 0Z{\caln(\bsb_n(t))} = 0\,.
\]
This justifies (1).

(2) By Serre vanishing, Lemma~\ref{le:Z-SerreVanishing}, there exists $\bsb\in\bbZ^c$ such that $\CH iZ{\caln(\bsb')}=0$ for $\bsb'\ge \bsb$ and $i\ge 1$.  Choose $t_1$ such that $\bsb_n(t)\ge \bsb$ for all $t\ge t_1$ and all $n$. Then
\[
\CH iZ{\caln(\bsb_n(t))} = 0
\]
for $t\ge t_1$, $i\ge 1$, and all $n\ge 0$.  This justifies (2).

(3)  One has  $\bsb_n(t)\ge \bsb'$ for all $t\ge 0$ and $n\gg 0$, where $\bsb'$ is as in the previous paragraph, and hence (3) holds.
\end{proof}

Parts (1) and (2) in the statement below correspond to regions D and E, respectively, in Figure~\ref{fig:proof}. Observe that the conclusions below concern limits; that these limits exist is part of the assertion.

\begin{lemma}
\label{le:oddZ2}
Let $\caln$ be a coherent sheaf on $Z$ with $\CH 0Z{\caln(\bsa)}=0$ for $\bsa\ll 0$.
\begin{enumerate}[\quad\rm(1)]
\item
For all integers $t$ and  $\bsb_n(t)$ as in \eqref{eq:bnt} one has
\[
\lim_{n\to\infty} \frac{h^*(Z,\caln(\bsb_n(t)))}{p^{nc}} = \rank(\caln) |(t+1)\cdots (t+c)|
\]
In particular the limit is zero when $-c\le t\le -1$.
\item
When  $t\le -c-1$ and $i\le c-1$ one has
\[
\lim_{n\to\infty} \frac{h^i(Z,\caln(\bsb_n(t)))}{p^{nc}}=0\,.
\]
\end{enumerate}
\end{lemma}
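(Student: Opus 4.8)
The plan is to derive part (1) from part (2) together with the structure of the Hilbert polynomial of $\caln$ recorded in \ref{ch:Z-hilb}, after splitting into three ranges of $t$; the essential input is part (2), which I would prove by Serre duality on $Z$. Since the $\ell$th entry of $\bsb_n(t)$ equals $(t+\ell)p^n$, the formula $\hilb_{\caln}(z_1,\dots,z_c)=\rank(\caln)\,z_1\cdots z_c+(\text{lower-order terms})$, whose error has total degree at most $c-1$, gives
\[
\lim_{n\to\infty}\frac{\chi(Z,\caln(\bsb_n(t)))}{p^{nc}}=\lim_{n\to\infty}\frac{\hilb_{\caln}(\bsb_n(t))}{p^{nc}}=\rank(\caln)\,(t+1)(t+2)\cdots(t+c)\,,
\]
where I write $\chi(Z,\calg)=\sum_i(-1)^ih^i(Z,\calg)$. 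The point to keep in mind throughout is that the \emph{total} cohomology $h^*=\sum_ih^i$ is not additive on short exact sequences, so one cannot simply resolve $\caln$ by finite sums of twists $\calo_Z(\bsa)$ and compute; passing to such a resolution $0\to\calf_{2c}\to\cdots\to\calf_0\to\caln\to0$ and running the hypercohomology spectral sequence yields only the inequality $h^*(Z,\caln(\bsa))\le\sum_ph^*(Z,\calf_p(\bsa))$, which together with the trivial bound $h^*(Z,\caln(\bsa))\ge|\chi(Z,\caln(\bsa))|$ will nonetheless suffice in two of the three ranges.

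For part (2) suppose $t\le-c-1$, so every entry $(t+\ell)p^n$ of $\bsb_n(t)$ is $\le-1$ and, for $n\gg0$, one has $-\bsb_n(t)\ge\bsa$ for any prescribed $\bsa$. As $Z$ is smooth and projective of dimension $c$ with $\omega_Z\cong\calo_Z(-2,\dots,-2)$, Serre duality gives $h^i(Z,\caln(\bsb_n(t)))=\dim_k\operatorname{Ext}^{c-i}_Z(\caln,\omega_Z(-\bsb_n(t)))$, and the local-to-global spectral sequence yields
\[
\dim_k\operatorname{Ext}^{c-i}_Z(\caln,\omega_Z(-\bsb_n(t)))\ \le\ \sum_{p+q=c-i}h^p\bigl(Z,\mathcal{E}xt^q_{\calo_Z}(\caln,\omega_Z)(-\bsb_n(t))\bigr)\,.
\]
Now fix $i\le c-1$, so $c-i\ge1$ in each summand. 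If $q\ge1$, then $\mathcal{E}xt^q_{\calo_Z}(\caln,\omega_Z)$ is supported in dimension $\le c-q\le c-1$ (the standard codimension bound for $\operatorname{Ext}$ sheaves over the regular local rings of $Z$), hence its Hilbert polynomial has total degree $\le c-1$; since $-\bsb_n(t)\gg0$, Serre vanishing (Lemma~\ref{le:Z-SerreVanishing}) kills the higher cohomology of this twisted sheaf for $n\gg0$, so $h^p\bigl(Z,\mathcal{E}xt^q(\caln,\omega_Z)(-\bsb_n(t))\bigr)=O(p^{n(c-1)})$. If $q=0$, then $p=c-i\ge1$ and $h^p\bigl(Z,\mathcal{H}om(\caln,\omega_Z)(-\bsb_n(t))\bigr)=0$ for $n\gg0$, again by Serre vanishing. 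Summing, $h^i(Z,\caln(\bsb_n(t)))=O(p^{n(c-1)})=o(p^{nc})$, which is exactly part (2).

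For part (1) I would argue case by case on the sign pattern of $\bsb_n(t)$. If $t\ge0$, all entries of $\bsb_n(t)$ tend to $+\infty$, so by Lemma~\ref{le:oddZ1}(3) one has $h^{\ge1}(Z,\caln(\bsb_n(t)))=0$ for $n\gg0$; hence $h^*=h^0=\chi(Z,\caln(\bsb_n(t)))=\hilb_{\caln}(\bsb_n(t))$, and dividing by $p^{nc}$ gives $\rank(\caln)(t+1)\cdots(t+c)=\rank(\caln)\lvert(t+1)\cdots(t+c)\rvert$ since all factors are positive. If $-c\le t\le-1$, the entry of $\bsb_n(t)$ in position $\ell=-t$ is identically $0$; then the inequality $h^*(Z,\caln(\bsb_n(t)))\le\sum_ph^*(Z,\calf_p(\bsb_n(t)))$ together with the K\"unneth computation $h^*(Z,\calo_Z(\bsa+\bsb_n(t)))=\prod_\ell\lvert a_\ell+(t+\ell)p^n+1\rvert$ from \eqref{eq:Z-cohomology} and \eqref{eq:Z-cohomology2}—whose factor in position $-t$ is bounded in $n$—gives $h^*(Z,\caln(\bsb_n(t)))=O(p^{n(c-1)})=o(p^{nc})$, which matches $\rank(\caln)\lvert(t+1)\cdots(t+c)\rvert=0$ in this range since $t+(-t)=0$. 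Finally, if $t\le-c-1$, part (2) gives $h^i(Z,\caln(\bsb_n(t)))=o(p^{nc})$ for $i\le c-1$, so $h^*(Z,\caln(\bsb_n(t)))=h^c(Z,\caln(\bsb_n(t)))+o(p^{nc})$ and $h^c(Z,\caln(\bsb_n(t)))=(-1)^c\chi(Z,\caln(\bsb_n(t)))+o(p^{nc})=(-1)^c\hilb_{\caln}(\bsb_n(t))+o(p^{nc})$; as the $c$ factors $t+1,\dots,t+c$ are all negative, $(-1)^c(t+1)\cdots(t+c)=\lvert(t+1)\cdots(t+c)\rvert$, and dividing by $p^{nc}$ yields the claim.

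The step I expect to be the real obstacle is part (2): bounding the non-top cohomology $h^i(Z,\caln(\bsb_n(t)))$ for $i<c$ when $t\le-c-1$. A crude d\'evissage to line bundles is not enough here, because both $h^c$ of the terms of a line-bundle resolution and the Euler characteristic $\chi(Z,\caln(\bsb_n(t)))$ are of exact order $p^{nc}$, so one must exploit the cancellation coming from the differentials of the spectral sequence; Serre duality repackages this as the statement that the $\mathcal{H}om$-dualized resolution, after the very positive twist by $\calo_Z(-\bsb_n(t))$, is exact off degree $0$ up to an error of order $p^{n(c-1)}$ governed by the small supports of the sheaves $\mathcal{E}xt^q(\caln,\omega_Z)$. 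The remaining technical point—the growth bound $h^p(Z,\calg(-\bsb_n(t)))=O(p^{n\dim\operatorname{Supp}\calg})$ for a coherent sheaf $\calg$ on $Z$—is routine once Serre vanishing is used to reduce it to the degree of the Hilbert polynomial of $\calg$.
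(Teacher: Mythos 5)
Your proof is correct and follows essentially the same line as the paper's: the identical case split on $t$, Serre (Grothendieck--Serre) duality combined with Serre vanishing and the rank-zero property of the higher $\mathcal{E}\mathit{xt}$ sheaves for the range $t\le -c-1$, the explicit K\"unneth/line-bundle computation for $-c\le t\le -1$, and the Hilbert-polynomial argument via Lemma~\ref{le:oddZ1}(3) for $t\ge 0$. The only cosmetic deviation is that for part~(1) with $t\le -c-1$ you recover $h^c$ from $(-1)^c\chi$ plus part~(2), whereas the paper computes $h^c$ directly as $h^0$ of the dual sheaf $\cale^0(\bsd_n(t))$; both are a one-line step once part~(2) is in hand.
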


\begin{proof}
We begin by proving (1) when $t\ge 0$; this corresponds to the region C in Figure~\ref{fig:proof}.  In this case  it follows from Lemma~\ref{le:oddZ1}(3) that for $n\gg 0$ one has
\[
h^*(Z;\caln(\bsb_n(t))) = h^0(Z;\caln(\bsb_n(t))) = \hilb_{\caln}(\bsb_n(t))\,.
\]
Given the description of $\bsb_n(t)$ from \eqref{eq:bnt} it follows that for $n\gg 0$ one has
\[
\hilb_{\caln}(\bsb_n(t))  = \rank(\caln) (t+1)\cdots(t+c)p^{nc} + (\text{terms with $p^s$ for $s<nc$.})
\]
Combining the two observations above yields
\[
\lim_{n\to\infty} \frac{h^*(Z,\caln(\bsb_n(t)))}{p^{nc}} = \rank(\caln) (t+1)\cdots(t+c)\,.
\]
This is the desired result.

Suppose $-c\le t\le -1$; this corresponds to the region D in Figure~\ref{fig:proof}. For any $\bsa\in \bbZ^c$, from \eqref{eq:Z-cohomology} and \eqref{eq:Z-cohomology2} one gets
\[
h^*(Z, \calo(\bsa)(\bsb_n(t))) = h^*(Z,\calo(\bsb_n(t)+\bsa)) = \prod_{i=1}^c |(t+i)p^n+a_i+1|\,.
\]
The constraint on $t$ means that the coefficient of $p^n$ in one of the terms in the product is zero, so the highest power of $p$ that appears is $p^{(n-1)c}$. It follows that
\[
\lim_{n\to\infty} \frac{h^*(Z, \calo(\bsa)(\bsb_n(t)))}{p^{nc}}=0\,.
\]
Since $\caln$ has a finite resolution by direct sum of sheaves of the form $\calo(\bsa)$ for $\bsa$ the stated vanishing holds also for $\caln$.

We verify (1) for $t\le -c-1$ and (2) simultaneously. We are now discussing region E in Figure~\ref{fig:proof}. The canonical sheaf on $Z$ is $\calo(-2,\dots,-2)$, so setting
\[
\bsd_n(t) = -\bsb_n(t) - (2,\dots,2)
\]
and applying Grothendieck-Serre duality we get
\[
{\CH iZ{\caln(\bsb_n(t))}}^{\vee} \cong  \CH {c-i}Z{\mathrm{RHom}_{\calo_Z}(\caln,\calo_Z)(\bsd_n(t))}
\]
The cohomology sheaves of $\mathrm{RHom}_Z(\caln,\calo_Z)$ are the coherent sheaves
\[
\cale^j \colonequals {\mathcal{E}{xt}}^j_{Z}(\caln,\calo_Z) \quad\text{for $0\le j\le c$.}
\]
Observe that since $t\le -c-1$ the $c$-tuple $\bsd_n(t)\gg 0$ for $n\gg 0$, so arguing as for (1) in the case $t\ge 0$ we get
\[
\lim_{n\to\infty} \frac{h^*(Z,\cale^j(\bsd_n(t)))}{p^{nc}}
	= \lim_{n\to\infty} \frac{\hilb_{\cale^j}(\bsd_n(t)))}{p^{nc}}\,.
\]
From the description of $\bsd_n(t)$,  for $n\gg 0$ we get that
\[
\hilb_{\cale^j}(\bsd_n(t)) = \rank(\cale^j) (-t-1)\cdots(-t-c)p^{nc} + (\text{terms with $p^s$ for $s<nc$.})
\]
The preceding computations yield
\[
\lim_{n\to\infty} \frac{h^*(Z,\cale^j(\bsd_n(t)))}{p^{nc}}  = \rank(\cale^j) (-t-1)\cdots(-t-c)\,.
\]
When $j\ge 1$ the sheaf $\cale^j$ has rank $0$ and the limit above is zero. Given this it is easy to see---see also Lemma~\ref{le:lim-zero}---that
\[
\lim_{n\to\infty} \frac{h^{c-i}(Z, \mathrm{RHom}_Z(\caln,\calo_Z) (\bsd_n(t)))}{p^{nc}} = \lim_{n\to\infty} \frac{h^{c-i}(Z,\cale^0(\bsd_n(t)))}{p^{nc}}\,.
\]
In particular when $i\le c-1$  the terms in the limit on the righthand side are $0$ for $n\gg 0$, again by Serre vanishing.  From this observation and the Grothendieck-Serre duality isomorphism above one gets
\[
\lim_{n\to\infty} \frac{h^i(Z,\caln(\bsb_n(t)))}{p^{nc}} = 0 \qquad\text{for $i\le c-1$.}
\]
This settles (2). Moreover  the same tokens give
\begin{align*}
\lim_{n\to\infty} \frac{h^*(Z,\caln(\bsb_n(t)))}{p^{nc}}
	&= \lim_{n\to\infty} \frac{\hilb_{\cale^0}(\bsd_n(t))}{p^{nc}}\\
	&= \rank(\cale^0) (-t-1)\cdots(-t-c)
\end{align*}
It remains to observe that $\cale^0=\mathcal{H}{om}(\caln,\calo_Z)$ so its rank equals $\rank\caln$.
\end{proof}

In the rest of this section we return to the context of Theorem~\ref{th:ma-convergence}.

\begin{chunk}
\label{ch:PandZ}
Let $k$ be an infinite field,  perfect of positive characteristic $p$, and  let $A$ be a standard graded $k$-algebra. We will assume in addition that $d\colonequals \dim A\ge 2$. This puts us in the context of Section~\ref{se:lim-Ulrich-sheaves} and we adopt  the notation from there:
\[
X\colonequals \proj A \qquad\text{and}\qquad c\colonequals \dim X = d-1
\]
We write $\vf\colon X\to X$ for the Frobenius map. Since $k$ is perfect this is a finite morphism. We fix maps
\begin{equation}
\label{eq:PandZ}
X \xra{\ \pi \ } \bbP_k^c \xleftarrow{\ \rho\ }  Z\colonequals (\bbP^1_k)^c\,,
\end{equation}
where $\pi$ is a finite linear projection, which exists because $k$ is infinite, and $\rho$ is a finite flat map such that $\rho^*(\calo_{\bbP_k^c}(1)) = \calo_Z(1,\dots,1)$.  For example, $\rho$ could be the  map given quotienting by the action of the symmetric group $S_c$ on $Z$.  For each non-negative integer $n$ consider the line bundle on $Z$ given by
\[
\call_n \colonequals \calo_Z(p^n, 2p^n,\dots, cp^n)
\]
\end{chunk}

We will also need a coherent sheaf on $X$ with special properties.

\begin{chunk}
\label{ch:calm}
Let $\calm$ be a coherent sheaf on $X$ such that the following conditions hold:
\begin{enumerate}[\quad\rm(1)]
\item
$\CH 0X{\calm(t)}=0$ for $t\ll0$;
\item
The sheaf $\pi_*\calm$ on $\bbP^c_k$ has positive rank;
\end{enumerate}
 With $\calm$ such a sheaf, and notation as above, we construct the sequence of sheaves on $X$ specified by
\begin{equation}
\label{eq:calun}
\calu_n(\calm) \colonequals \vf^n_*(\calm \otimes \pi^*\rho_*(\call_n))\,.
\end{equation}
\end{chunk}

The result below is the first step in the proof of Theorem~\ref{th:ma-convergence}.

\begin{theorem}
\label{th:ma-revisited}
With $X$ and $\calm$ as above, the sequence $(\calu_n(\calm))_{n\geqslant 0}$ of sheaves on $X$ given in \eqref{eq:calun} is a lim Ulrich sequence.
\end{theorem}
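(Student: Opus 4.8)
The plan is to reduce each of the four conditions in the definition~\ref{ch:limUsheaves} of a lim Ulrich sequence of sheaves, applied to $(\calu_n(\calm))_{n\geqslant 0}$, to a cohomology statement on $Z=(\bbP^1_k)^c$ of exactly the shape handled by Lemmas~\ref{le:oddZ1} and~\ref{le:oddZ2}.

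The crux is the identity
\[
\CH iX{\calu_n(\calm)(t)}\;\cong\;\CH iZ{\caln(\bsb_n(t))},\qquad \caln\colonequals\rho^*\pi_*\calm,
\]
valid for all $i$, $t$, $n$, with $\bsb_n(t)$ as in \eqref{eq:bnt}. I would prove it by a chain of projection formulas. Since $\vf$ is finite and $(\vf^n)^*\calo_X(1)\cong\calo_X(p^n)$, the projection formula gives $\vf^n_*(\calg)(t)\cong\vf^n_*(\calg(p^nt))$, and $\vf^n$ being affine one has $\CH iX{\vf^n_*(-)}\cong\CH iX{-}$; thus $\CH iX{\calu_n(\calm)(t)}\cong\CH iX{(\calm\otimes\pi^*\rho_*\call_n)(p^nt)}$. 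Using $\pi^*\calo_{\bbP^c_k}(1)=\calo_X(1)$ together with the projection formula for the finite map $\pi$, this equals $\CH i{\bbP^c_k}{\pi_*\calm\otimes(\rho_*\call_n)(p^nt)}$; and using $\rho^*\calo_{\bbP^c_k}(1)=\calo_Z(1,\dots,1)$, the projection formula for the finite flat map $\rho$, and the evident identity $\call_n\otimes\calo_Z(p^nt,\dots,p^nt)\cong\calo_Z(\bsb_n(t))$, one more step turns this into $\CH iZ{\caln(\bsb_n(t))}$. (This last identity is precisely what dictates the definition of $\bsb_n(t)$.) I would also note that $\caln$ satisfies the hypothesis $\CH 0Z{\caln(\bsa)}=0$ for $\bsa\ll 0$ needed in Lemmas~\ref{le:oddZ1} and~\ref{le:oddZ2}: this holds for $\calm$ by \ref{ch:calm}(1)---equivalently, $\calm$ has positive depth at every closed point---and that property is inherited by the finite pushforward $\pi_*\calm$ and then by the flat pullback $\caln=\rho^*\pi_*\calm$. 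Finally $\rank\caln=\rank(\pi_*\calm)>0$ by \ref{ch:calm}(2).

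Granting the identity, the four conditions fall out. Conditions~\ref{ch:limUsheaves}(2) and~(3) are Lemma~\ref{le:oddZ1}(1) and~(2) read through it. For condition~(4) one first needs the denominator: since $\bsb_n(0)=(p^n,2p^n,\dots,cp^n)\gg 0$ for $n\gg 0$, Lemma~\ref{le:oddZ1}(3) gives $h^i(Z,\caln(\bsb_n(0)))=0$ for $i\geqslant 1$ and $n\gg 0$, so Lemma~\ref{le:oddZ2}(1) with $t=0$ yields
\[
\lim_{n\to\infty}\frac{h^0(X,\calu_n(\calm))}{p^{nc}}=c!\,\rank(\caln)>0,
\]
which establishes condition~(1) and shows the denominator is of exact order $p^{nc}$. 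It then remains to show $\lim_n h^i(Z,\caln(\bsb_n(t)))/p^{nc}=0$ for every pair $(i,t)$ lying outside the two ranges $\{i=0,\,t\geqslant 0\}$ and $\{i=c,\,t\leqslant -c-1\}$; I would handle these by cases, each covered by one of the lemmas: $t\geqslant 0$ with $i\geqslant 1$ (Lemma~\ref{le:oddZ1}(3)), $t\geqslant t_1$ with $i\geqslant 1$ (Lemma~\ref{le:oddZ1}(2)), $-c\leqslant t\leqslant -1$ with any $i$ (Lemma~\ref{le:oddZ2}(1), where even $h^*/p^{nc}\to 0$), and $t\leqslant -c-1$ with $i\leqslant c-1$ (Lemma~\ref{le:oddZ2}(2)). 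A glance at Figure~\ref{fig:proof} confirms these exhaust the required region, so condition~(4) holds and $(\calu_n(\calm))_{n\geqslant 0}$ is a lim Ulrich sequence of sheaves.

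The main obstacle is not conceptual---Lemmas~\ref{le:oddZ1} and~\ref{le:oddZ2} do all the analytic work---but organizational: getting the chain of projection formulas right (especially the Frobenius twist $(\vf^n)^*\calo_X(1)\cong\calo_X(p^n)$ and the passage $\call_n\otimes\calo_Z(p^nt,\dots,p^nt)\cong\calo_Z(\bsb_n(t))$), and then checking that the finitely many case distinctions in condition~(4) really cover the complement of the two ``Ulrich'' ranges.
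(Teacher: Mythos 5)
Your proposal is correct and follows the same route as the paper: the chain of projection formulas reducing $h^i(X,\calu_n(\calm)(t))$ to $h^i(Z,\caln(\bsb_n(t)))$ is exactly the claim the paper proves, and your case-by-case verification of conditions (1)--(4) of \ref{ch:limUsheaves} via Lemmas~\ref{le:oddZ1} and~\ref{le:oddZ2} matches the paper's argument, including the use of \ref{le:oddZ1}(3) together with \ref{le:oddZ2}(1) at $t=0$ to get $\lim_n h^0(X,\calu_n)/p^{nc}=c!\,\rank\caln>0$.
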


\begin{proof}
Set $\calu_n\colonequals \calu_n(\calm)$ and $\caln \colonequals \rho^*\pi_*\calm$; this is a sheaf on $Z$. Since $\rho$ is flat the hypotheses on $\calm$ imply that $\caln$ is locally of positive depth at all closed points, and has positive rank; see the discussion in \ref{ch:fac}.

It will be convenient to introduce, as in \eqref{eq:bnt}, the $c$-tuple
\[
\bsb_n(t) \colonequals ((t+1)p^n,\dots, (t+c)p^n)\,,
\]
for $n\ge 0$ and all $t$, as in \eqref{eq:bnt}.

\begin{claim}
For any  integers $n,t$ and $i$ there is an equality
\[
h^i(X,{\calu_n(t)}) = h^i(Z;\caln(\bsb_n(t))) \,.
\]

Indeed,  since $\vf^*(\calo_X(t)) \cong \calo_X(pt)$ the projection formula for $\vf$ yields
\[
\calu_n(t) = \vf^n_*(\calm \otimes \pi^*\rho_*(\call_n))(t)\cong \vf^n_*(\calm \otimes \pi^*\rho_*(\call_n)(p^nt))\,.
\]
Since $k$ is perfect, for any coherent sheaf $\mathcal G$ on $X$ one has $\CH iX{\vf_*\mathcal{G}}\cong \CH iX{\mathcal G}$. Therefore from the isomorphism above one gets the first isomorphism below
\begin{align*}
\CH iX{\calu_n(t)}
	&\cong \CH iX{\calm\otimes (\pi^*\rho_*\call_n)(p^nt)}\\
	&\cong \CH i{\bbP_k^c}{\pi_*(\calm)\otimes \rho_*\call_n(p^nt)}\,.
\end{align*}
The second isomorphism holds by the the projection formula for $\pi$ and its linearity. The stated claim now follows from the projection formula for $\rho$ and the fact that, by construction, $\rho$ is flat, with $\rho^*\calo_{\bbP_k^c}(1)\cong \calo_Z(1,\dots,1)$
\end{claim}

Given the  claim above and the computations on $Z$ made earlier, it is easy to check that $(\calu_n)_{n\geqslant 0}$ is a lim Ulrich sequence of sheaves. Indeed, Lemma~\ref{le:oddZ1}(3) and Lemma~\ref{le:oddZ2}(1) imply
\[
\lim_{n\to\infty} \frac{h^0(X, \calu_n(0))}{p^{nc}}
	= \lim_{n\to\infty} \frac{h^*(X, \calu_n(0))}{p^{nc}}
	= (\rank\caln) c!\,.
\]
Since $\rank\caln >0$ condition \ref{ch:limUsheaves}(1) follows.

Conditions \ref{ch:limUsheaves}(2) and \ref{ch:limUsheaves}(3) follow from parts (1) and (2) of Lemma~\ref{le:oddZ1}, respectively. Lastly, to verify  \ref{ch:limUsheaves}(4) for the sequence $(\calu_n)_{n\geqslant 0}$, we verify that
\[
\lim_{n\to\infty} \frac{h^i(Z;\caln(\bsb_n(t)))}{p^{nc}}=0
\]
for all $i,t$, except when $i=0$ and $t\ge 0$, and when $i=c$ and $t\le -c-1$.  When $t\ge 0$ this is by Lemma~\ref{le:oddZ1}(3), whereas when $t\le -1$ the desired vanishing is a consequence of Lemma~\ref{le:oddZ2}. This completes the proof.
\end{proof}

Next we describe a map that mediates the passage between coherent sheaves on $X$ and $R$-modules.

\begin{chunk}
\label{ch:gamma}
Let $A$ and $X$ be as in  \ref{ch:PandZ} and $R\colonequals A_\fm$, the localization of $A$ at its homogeneous maximal ideal. The functor 
\[
\calf\mapsto \varGamma_{n\geqslant 0}(\calf)_\fm
\]
from coherent sheaves on $X$ to finitely generated $R$-modules yields an $\bbR$-linear map
\[
\gamma \colon {\grog X}_{\bbR}  \lra  {\grog R}_{\bbR}\,.
\]
Observe that when the $A$-module $\Gamma_*(\calf)$ is finitely generated, its class of $\Gamma_*(\calf)$ in ${\grog A}_{\bbR}$ is the same as that of $\varGamma_{n\geqslant 0}(\calf)$, and hence $\gamma(\calf)=\varGamma_*(\calf)_\fm$. This is because the class of $k$ in $\grog A$ is torsion, given our assumption that $\dim A\ge2$.

We write $\vf_*$  for the pushforward of the Frobenius map on $X$ and also for that on $A$ and $R$; the one in use will be clear from the context. The result below records the compatibility of  $\gamma$ with $\vf_*$, and with cap product, $\cap$, which is induced by the tensor product of coherent sheaves. 

\begin{lemma}
\label{le:gamma}
The following statements hold.
\begin{enumerate}[\quad\rm(1)]
\item
There is an equality $\vf_*\circ \gamma = p\cdot \gamma\circ \vf_* $ as maps from ${\grog X}_{\bbR}$ to ${\grog R}_{\bbR}$.
\item
With $\pi\colon X\to \bbP^c_k$ the map from \eqref{eq:PandZ}, there is an equality
\[
\gamma(\alpha\cap \pi^*(\beta)) = \gamma(\alpha)\cdot \rank\beta\,,
\]
for any $\alpha$ in ${\grog X}_{\bbR}$ and $\beta$ in ${\mathrm{K}_0(\bbP_k^c)}_{\bbR}$.
\end{enumerate}
\end{lemma}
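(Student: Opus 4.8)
The plan is to treat part~(2) first, since it is a formal consequence of the fact that $\gamma$ is insensitive to Serre twists. Since $\gamma$ and $\alpha\mapsto\alpha\cap\pi^{*}(\beta)$ are both $\bbR$-linear, it suffices to take $\alpha=[\calf]$ for a coherent sheaf $\calf$ on $X$ and, since $\{[\calo_{\bbP_{k}^{c}}(m)]\}_{m\in\bbZ}$ spans ${\mathrm{K}_{0}(\bbP_{k}^{c})}_{\bbR}$ and each of these classes has rank $1$, to take $\beta=[\calo_{\bbP_{k}^{c}}(m)]$. Linearity of $\pi$ gives $\pi^{*}[\calo_{\bbP_{k}^{c}}(m)]=[\calo_{X}(m)]$, so the assertion reduces to the identity $\gamma[\calf(m)]=\gamma[\calf]$. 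To prove that, I would note that $\varGamma_{n\geqslant 0}(\calf(m))$ coincides, up to an internal degree shift that leaves the underlying ungraded module unchanged, with the truncation $\bigoplus_{s\geqslant m}\CH 0X{\calf(s)}$ of $\varGamma_{*}(\calf)$; every such truncation is a finitely generated graded $A$-module, any two truncations differ by a module of finite length, and finite length modules vanish in ${\grog R}_{\bbR}$ because $\dim A\geqslant 2$ (see \ref{ch:gamma}). Hence $\gamma[\calf(m)]=[\varGamma_{n\geqslant 0}(\calf)_{\fm}]=\gamma[\calf]$, and part~(2) follows.

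For part~(1), by $\bbR$-linearity it is enough to check $\vf_{*}\gamma[\calf]=p\cdot\gamma\vf_{*}[\calf]$ for $\calf$ a coherent sheaf on $X$. Set $M\colonequals\varGamma_{n\geqslant 0}(\calf)$, a finitely generated graded $A$-module, and for $0\leqslant r\leqslant p-1$ let $M^{\langle r\rangle}$ be the graded $A$-module whose degree-$t$ component is $M_{pt+r}$ and on which $a\in A$ acts as multiplication by $a^{p}$; this is restriction of scalars along the graded ring map $\vf\colon A\to A^{(p)}$, applied to the residue class $r$ of $M$. Since $k$ is perfect, $\vf\colon A\to A$ is module-finite, so each $M^{\langle r\rangle}$ is finitely generated and $\vf_{*}^{A}M=\bigoplus_{r=0}^{p-1}M^{\langle r\rangle}$ as $A$-modules. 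Two computations then set the problem up. First, the projection formula for $\vf\colon X\to X$, together with $\vf^{*}\calo_{X}(1)\cong\calo_{X}(p)$ and $\CH iX{\vf_{*}\mathcal G}\cong\CH iX{\mathcal G}$, gives $\CH 0X{(\vf_{*}\calf)(t)}\cong\CH 0X{\calf(pt)}$ compatibly with the module structures (multiplication by $a\in A_{1}$ becomes multiplication by $a^{p}$), whence $\varGamma_{n\geqslant 0}(\vf_{*}\calf)=M^{\langle 0\rangle}$ and therefore $\gamma\vf_{*}[\calf]=[M^{\langle 0\rangle}_{\fm}]$. Second, $\vf_{*}\gamma[\calf]=[(\vf_{*}^{A}M)_{\fm}]=\sum_{r=0}^{p-1}[M^{\langle r\rangle}_{\fm}]$. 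Thus part~(1) reduces to showing that the classes $[M^{\langle r\rangle}_{\fm}]$ in ${\grog R}_{\bbR}$ are all equal.

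I would deduce this from the following statement, proved by induction on $\dim N$: for every finitely generated graded $A$-module $N$, the class $[N^{\langle r\rangle}_{\fm}]$ is independent of $r$. If $\dim N\leqslant 0$ then every $N^{\langle r\rangle}$ has finite length and all these classes are $0$. If $\dim N\geqslant 1$, one may first replace $N$ by $N/\mathrm{H}^{0}_{\fm}(N)$ (this alters each $[N^{\langle r\rangle}_{\fm}]$ by a finite length class, i.e.\ not at all) so that $\fm\notin\operatorname{Ass} N$; then, $k$ being infinite and every associated prime of $N$ lying properly inside $\fm=(A_{1})$, a general linear form $\ell\in A_{1}$ is a nonzerodivisor on $N$. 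Multiplication by $\ell$ yields short exact sequences of graded $A$-modules
\[
0\to N^{\langle r\rangle}\xra{\ \ell\ }N^{\langle r+1\rangle}\to (N/\ell N)^{\langle r+1\rangle}\to 0\qquad(0\leqslant r\leqslant p-2)
\]
together with a wrap-around sequence $0\to N^{\langle p-1\rangle}\xra{\ \ell\ }N^{\langle 0\rangle}(1)\to (N/\ell N)^{\langle 0\rangle}(1)\to 0$, the internal degree shifts being irrelevant after localizing at $\fm$. By the inductive hypothesis applied to $N/\ell N$, whose dimension is $\dim N-1$, the class $c\colonequals[(N/\ell N)^{\langle s\rangle}_{\fm}]$ does not depend on $s$; adding the $p$ sequences around the circle gives $pc=0$ in the real vector space ${\grog R}_{\bbR}$, so $c=0$, and hence every difference $[N^{\langle r+1\rangle}_{\fm}]-[N^{\langle r\rangle}_{\fm}]$ vanishes. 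This completes the induction, and applying it with $N=M$ finishes part~(1).

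The delicate point is this induction: one has to keep careful track of the interaction of restriction of scalars along Frobenius with the decomposition of $M$ into residue classes modulo $p$, and genuinely use that the computation is carried out in the real vector space ${\grog R}_{\bbR}$—both to discard finite length (indeed torsion) classes and to pass from $pc=0$ to $c=0$—in order to collapse the a priori nonzero lower-dimensional correction terms $c$. The remaining ingredients (the two projection-formula computations and the reductions in part~(2)) are routine.
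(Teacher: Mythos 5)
Your proposal is correct. Part~(2) is handled essentially as in the paper: one reduces, by linearity and linearity of $\pi$, to $\beta=[\calo_{\bbP^c_k}(m)]$ and to the identity $\gamma[\calf(m)]=\gamma[\calf]$, which holds because any two truncations of $\varGamma_*(\calf)$ differ by a finite length module, and finite length classes vanish in ${\grog R}_\bbR$ since $\dim A\geq 2$; the paper records exactly this observation (that $[\calf]-[\calf(-1)]$ lies in $\ker\gamma$) as the key fact for both parts.

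For part~(1), both arguments start from the same decomposition: $\vf_*\gamma[\calf]=\sum_{r=0}^{p-1}\gamma\vf_*[\calf(r)]$, equivalently $\vf_*^A M\cong\bigoplus_r M^{\langle r\rangle}$ and $\varGamma_{n\geq 0}(\vf_*\calf)=M^{\langle 0\rangle}$, and the task is to show the $p$ summands have the same class. Here the two proofs genuinely diverge. The paper works at the level of classes $\alpha\in{\grog X}_\bbR$ and exploits a multiplicative K\nobreakdash-theory fact: $1+[\calo(1)]+\cdots+[\calo(p-1)]$ is a unit in ${\mathrm{K}_0(X)}_\bbR$ (by \cite[Chapter II, Proposition 8.8.4]{Weibel:2013}), so that writing $\alpha=(1+[\calo(1)]+\cdots+[\calo(p-1)])\beta$ and using the telescoping identity $(1-[\calo(1)])(1+\cdots+[\calo(p-1)])=1-[\calo(p)]$ together with the projection formula for $\vf$ gives $\gamma\vf_*(\alpha-\alpha(1))=0$ directly. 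Your proof replaces this slick but opaque unit trick with a hands-on induction on $\dim N$, slicing by a general linear form $\ell\in A_1$ (available since $k$ is infinite and one may first kill $\mathrm{H}^0_\fm(N)$) to produce $p$ short exact sequences, including the wrap-around twist $N^{\langle p-1\rangle}\hookrightarrow N^{\langle 0\rangle}(1)$, and then using that $p$ is invertible over $\bbR$ to annihilate the lower-dimensional error terms. Both arguments lean on the same two facts—finite-length classes die in ${\grog R}_\bbR$, and $p$ is a unit in $\bbR$—but the paper's proof offloads the work to a cited K-theory proposition, while yours is more elementary and self-contained; its only cost is having to track the module-theoretic decomposition $M^{\langle r\rangle}$ and the degree shifts explicitly, which you do correctly (in particular the wrap-around sequence and the reduction to $\fm\notin\operatorname{Ass}N$ are handled properly, and the passage from $pc=0$ to $c=0$ is legitimate precisely because you work in the real vector space).
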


\begin{proof} 
The arguments uses the  fact  for any coherent sheaf $\calf$ on $X$ the class  $[\calf] - [\calf(-1)]$ in ${\grog X}$ is in the kernel of the map $\gamma$.

 (1) Fix a coherent sheaf $\calf$ on $X$. If  $M$ is a finitely generated graded $A$-module whose sheafification is $\calf$, then $\vf_*(\calf(i))$ is the sheafication of the abelian group $\oplus_{j\in \bbZ} M_{i+jp}$, viewed as an $A$-module via the Frobenius endomorphism of $A$. Given this observation, in $\grog{R}_{\bbR}$ one gets
\[
\vf_*  \gamma (\calf) = \sum_{i=0}^{p-1} \gamma  \vf_*(\calf(i))  
\]
To justify (1) it remains to verify that for any $\alpha$ in ${\grog X}_{\bbQ}$ and integer  $i$ one has
\[
\gamma\vf_*(\alpha(i)) = \gamma\vf_*(\alpha)\,.
\]
In verifying this equality we may assume $i = 1$.  Moreover the element
\[
 1 + [\calo(1)] + \dots + [\calo(p-1)]
 \]
is a unit in ${\mathrm{K}_0(X)}_{\bbR}$, by \cite[Chapter II, Proposition~8.8.4]{Weibel:2013},  and ${\grog X}_{\bbR}$ is a ${\mathrm{K}_0(X)}_{\bbR}$-module. Therefore we may assume
\[
\alpha = (1 + [\calo(1)] + \dots + [\calo(p-1)]) \beta
\]
for some class $\beta$. Hence
\[
\vf_*(\alpha - \alpha(1)) = \vf_*((1 - [\calo(p)]) \beta) = \vf_*(\beta) - \vf_*(\beta(p)) = \vf_*(\beta) - \vf_*(\beta)(1)
\]
where the last equality uses the projection formula.  Now apply $\gamma$ to conclude
\[
\gamma(\vf_*(\alpha - \alpha(1)) = 0 
\]
and hence that $\gamma(\vf_*(\alpha)) = \gamma(\vf_*(\alpha(1))$, as desired.

(2) This holds because ${\mathrm{K}_0(\bbP^c)}_{\bbR}$ is generated by the classes of twists of $\calo_{\bbP_k^c}$. 
\end{proof}

\end{chunk}

We are now ready to prove the result announced at the start of this section.

\begin{proof}[Proof of Theorem~\ref{th:ma-convergence}]
Let $A$ be a standard graded $k$-algebra as postulated in the statement, with homogenous maximal ideal $\fm$, so $R=A_\fm$. With $d\colonequals \dim A\le 1$ the desired result is contained in Example~\ref{ex:dim1U}. So we assume $d\ge 2$.

We keep the notation from \ref{ch:PandZ}; in particular,  $X\colonequals \proj A$.  The first step is to get a lim Ulrich sequence of $R$-modules by applying Theorem~\ref{th:ma-revisited}. This result takes as  input a coherent sheaf $\calm$ on $X$ satisfying the conditions in \ref{ch:calm}.  For what follows, we need a bit more from $\calm$: Since $A$ is graded, its associated primes are homogenous. Consider the set $\Lambda(A)$ of minimal primes of $A$ defining components of dimension $d$. Let $\calm$ be the sheaf on $X$ obtained by sheafifying the $A$-module
\[
\bigoplus_{\fp\in\Lambda(A)} (A/\fp)^{\length(A_\fp)}\,.
\]
Since the module is finitely generated $\calm$ is coherent.

\begin{claim}
The sheaf $\calm$  satisfies the conditions in \ref{ch:calm},  the $A$-module $M\colonequals \varGamma_*(\calm)$ is finitely generated, and the $R$-module $M_\fm$ satisfies condition \eqref{eq:rank}.
\medskip

Indeed, as $\dim (A/\fp)\ge 2$ for  $\fp\in\Lambda(A)$, the sheaf $\calm$ is locally of positive depth at each closed point, so condition (1) of \ref{ch:calm} holds; equivalently,  the $A$-module $M$ is finitely generated; see \ref{ch:fac}. Moreover
\[
\dim \pi_*\calm = \dim \calm = \dim X = \dim \bbP^c_k
\]
where the first and last equalities hold because $\pi$ is finite, and the second holds by the choice of $M$. Thus $\pi_*\calm$ has positive rank, so satisfying condition (2) of \ref{ch:calm}.

Finally, by the construction of $M$ there is a natural map of $A$-modules
 \[
 \bigoplus_{\fp\in\Lambda(A)} (A/\fp)^{\length(A_\fp)}\lra M \,.
 \]
This map is one-to-one, because the module on the left has positive depth, and its cokernel has finite length. Now it is a simple computation to check that the $R$-module $M_\fm$ satisfies \eqref{eq:rank}.
\end{claim}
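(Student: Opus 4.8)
The plan is to establish the three substantive assertions, noting first that condition~(1) of~\ref{ch:calm} and the finite generation of $M=\varGamma_*(\calm)$ are really one and the same: by the characterization of~\eqref{eq:depth-condition} recalled in~\ref{ch:fac}, both are equivalent to $\calm$ having positive depth at every closed point of $X$. So I would prove (a) $\depth_{\calo_x}\calm_x\ge 1$ for all closed points $x\in X$; (b) $\pi_*\calm$ has positive rank on $\bbP^c_k$; and (c) the $R$-module $M_\fm$ satisfies~\eqref{eq:rank}.

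For (a), I would argue summand by summand. For each $\fp\in\Lambda(A)$ one has $\dim(A/\fp)=d\ge 2$, so the sheaf on $X$ associated to the graded domain $A/\fp$ has, at a closed point $x$, a stalk that is either zero or a local domain which is not a field (because $V_{+}(\fp)$ has positive dimension, so $x$ is a proper specialization of its generic point); in either case the stalk has depth at least $1$. Summing over $\fp$ gives (a), and then~\cite[Corollaire~1.4]{sga2:2005}, applicable since $\dim X=d-1\ge 1$, yields condition~(1) of~\ref{ch:calm} together with the finite generation of $M$.

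For (b), since $\pi$ is finite one has $\dim\pi_*\calm=\dim\calm$, and $\supp\calm=\bigcup_{\fp\in\Lambda(A)}V_{+}(\fp)$ has dimension $d-1=\dim X=\dim\bbP^c_k$; as $\pi$ is finite, $\pi(\supp\calm)=\supp\pi_*\calm$ is closed of the same dimension $c$, so $\pi_*\calm$ has full-dimensional support on the integral scheme $\bbP^c_k$, hence nonzero stalk at the generic point, i.e. $\rank\pi_*\calm>0$. This is condition~(2).

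For (c), I would compare $M$ with the finitely generated $A$-module $N\colonequals\bigoplus_{\fp\in\Lambda(A)}(A/\fp)^{\length(A_\fp)}$, whose sheafification is $\calm$. The canonical map $N\to\varGamma_*(-)$-completion $M$ becomes an isomorphism after sheafification, so its kernel and cokernel are $\fm$-torsion; and since $\fm\not\subseteq\fp$ for every $\fp\in\Lambda(A)$, the ideal $\fm$ contains a nonzerodivisor on the domain $A/\fp$, whence $\depth_\fm N\ge 1$, $\lch 0\fm N=0$, so $N\hookrightarrow M$ with cokernel of finite length. Now $\Lambda(R)$ corresponds bijectively to $\Lambda(A)$ (the homogeneous minimal primes cutting out top-dimensional components, all properly contained in $\fm$), and $R_\fp=A_\fp$ for such $\fp$; localizing at $\fp\in\Lambda(R)$ therefore kills the finite-length cokernel and gives $M_\fp\cong N_\fp\cong\bigl((A/\fp)_\fp\bigr)^{\length(A_\fp)}$, which has length $\length(A_\fp)=\length(R_\fp)$ over $R_\fp$. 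Thus~\eqref{eq:rank} holds with $m=1$, so $e_d(M)=e(R)$ and $[M]_d=[R]_d$ by~\eqref{eq:rank2} --- exactly the input needed to finish the proof of Theorem~\ref{th:ma-convergence}. The main point requiring care will be the bookkeeping in (c): verifying that the chosen exponents $\length(A_\fp)$ are precisely what forces $m=1$, and that the correspondence between graded primes of $A$ and primes of $R$ preserves dimension; the rest is the routine length computation.
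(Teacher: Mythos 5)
Your proof is correct and follows essentially the same route as the paper's: you verify positive depth at closed points for condition (1), the dimension/support argument for positive rank of $\pi_*\calm$ for condition (2), and compare $M$ with $N=\bigoplus_{\fp\in\Lambda(A)}(A/\fp)^{\length(A_\fp)}$ via the canonical injection with finite-length cokernel for condition~\eqref{eq:rank}. You have simply filled in, carefully and accurately, the details the paper compresses into ``it is a simple computation,'' including the explicit length bookkeeping at each $\fp\in\Lambda(R)$ showing $m=1$.
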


For each $n\ge 0$, let $\calu_n$ be the sheaf on $X$ constructed as in \eqref{eq:calun} with $\calm$ as defined above and consider the $R$-module:
\[
U_n\colonequals \varGamma_*(\calu_n)_\fm\,.
\]
 Theorem~\ref{th:ma-revisited} yields that  $(\calu_n)_{n\geqslant 0}$ is a lim Ulrich sequence of sheaves on $X$, and then Theorem~\ref{th:usheaf-umodule} yields that the sequence of $A$-modules $(\varGamma_*(U_n))_{n\geqslant 0}$ is lim Ulrich. It remains to invoke Proposition~\ref{pr:graded-vs-local}(3) to conclude that the sequence of $R$-modules $(U_n)_{n\geqslant 0}$ is lim Ulrich.

Next we verify the following  equality:
\[
\lim_{n\to\infty} \frac{[U_n]}{\nu_R(U_n)} = \frac{[R]_{d}}{e(R)}\,.
\]
We do so by passing to $X$ and using the observations from \ref{ch:gamma}. 

Recall the construction of the sheave $\calu_n$ from \eqref{eq:calun}. The rank of $\rho_*(\call_n)$ is $c!$ and hence so is the rank of $\pi^*\rho_*(\call_n)$. Given this and using Lemma~\ref{le:gamma} one gets:
\begin{align*}
p^n[U_n] & = p^n\gamma [\calu_n]\\
	&= p^n \gamma [\vf_*^n(\calm \otimes \pi^*\rho_*(\call_n))] \\
	& =  \vf_*^n \gamma [\calm \otimes \pi^*\rho_*(\call_n))] \\
	& = c!  \vf_*^n (\gamma [\calm])\\
	&= c! [\vf_*^n(M_\fm)]
\end{align*}
From this equality one gets the second equality below:
\begin{align*}
\lim_{n\to\infty} \frac{[U_n]}{\nu_R(U_n)}
	&=\lim_{n\to\infty} \frac{[U_n]}{e_d(U_n)} \\
	&= \lim_{n\to\infty} \frac{[\vf^n_*(M_\fm)]}{e_d(\vf^n_*(M_\fm))} \\
	&= \frac{[R]_d}{e(R)}
\end{align*}
The first equality is  by \eqref{eq:lus}, for the sequence $(U_n)$ is lim Ulrich. The last one is by Lemma~\ref{le:frobenius-limit}, which can applied given the claim from earlier in this proof.
\end{proof}

\section{Lech's conjecture revisited}
\label{se:lech}
 Corollary~\ref{co:chi-infinity}  leads us to propose the following:

\begin{conjecture}
\label{con:DuttaMultiplicity}
Let $R$ be a complete local ring. If $F$ is a short complex in $\perf R$, then  $\chi_\infty(F)\geq e(R)$.
\end{conjecture}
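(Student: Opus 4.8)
The natural line of attack is to reduce the inequality to the statement that the top component $[R]_d$ of the class of $R$ in $\euler R$ --- which is defined for an arbitrary complete local ring via the Riemann--Roch isomorphism, as indicated in \ref{ch:euler-decomposition} --- is a lim Ulrich point. Once that is known, the identity $\chi_\infty(F)=\pair F{[R]_d}$ from \ref{ch:Dutta}, combined with Theorem~\ref{th:lus} (or directly with Corollary~\ref{co:lus}), yields not just $\chi_\infty(F)\ge e(R)$ but the sharper bounds $\binom di\chi_\infty(F)\ge\beta^R_i(F)e(R)$. As a preliminary reduction one may assume the residue field of $R$ is algebraically closed: by \ref{ch:gonflament} and \ref{ch:Dutta} a flat local extension $R\to S$ with $\fm S=\fn$ changes neither $\chi_\infty(F)$, nor $e(R)$, nor the Betti numbers, and lim Ulrich points ascend along such extensions (see \ref{ch:limU-point}). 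So the task becomes: show $[R]_d$ is a lim Ulrich point in $\euler R$ for every complete local ring with algebraically closed residue field.

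The second step is to reduce to positive characteristic and then to the graded case, where the result is already in hand. For $R$ of equal characteristic zero or of mixed characteristic, the plan is to spread $R$ out over a finitely generated $\bbZ$-subalgebra and reduce modulo a suitable prime $p$, using that $e(R)$ is preserved under good reduction and that the Dutta multiplicity of the reduced complex does not exceed $\chi_\infty(F)$; this leaves complete local rings of characteristic $p$. For such $R$ I would degenerate it to its associated graded ring $A\colonequals\gr_{\fm}(R)$, a standard graded $k$-algebra with $\dim A=\dim R$ and $e(A)=e(R)$, via the extended Rees algebra viewed as a flat family over the affine line. A short complex $F$ over $R$, filtered $\fm$-adically, produces a complex $\gr(F)$ of graded free $A$-modules with the same ranks, hence the same total Betti number; if $\gr(F)$ is again a short complex over $A$ and the Dutta multiplicity can only drop in passing from the special to the general fibre, then Corollary~\ref{co:chi-infinity} applied to $A$ --- which rests on Theorem~\ref{th:ma-convergence} and the sheaf-theoretic construction of Section~\ref{se:lim-Ulrich-sheaves} --- completes the argument.

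The principal obstacle is precisely where the lim Ulrich method is known to break down. Yhee~\cite{Yhee:2021} has constructed two-dimensional complete local domains admitting no lim Ulrich sequence, hence no lim Ulrich point whatsoever, so $[R]_d$ cannot be exhibited as a lim Ulrich point by any uniform construction; one must instead make the degeneration of the second step genuinely work. Two things have to be controlled there, and neither is routine: first, that $\gr(F)$ --- or a suitable modification of it, after passing to a minimal filtration and perhaps a generic hyperplane section --- still has \emph{finite-length} homology and length $\dim R$, the point being that although the associated graded of a finite-length module is again finite length, the homology of $\gr(F)$ can a priori be strictly larger than $\gr(H_*(F))$; and second, that the Dutta multiplicity does not fall below $e(R)=e(A)$ across the family. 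Realizing these controls --- perhaps by analyzing the degeneration at the level of Grothendieck groups through $\gamma$-type maps as in \ref{ch:gamma} and tracking how $[R]_d$ maps to $[A]_d$ --- is where I expect the real difficulty to lie; and since the conjecture already implies the still-open Lech's conjecture, it seems likely that carrying this out requires ideas beyond those developed here.
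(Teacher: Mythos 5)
The statement you are analysing is a \emph{conjecture}, not a theorem: the paper poses Conjecture~\ref{con:DuttaMultiplicity} as an open problem and does not prove it, so there is no ``paper's own proof'' against which to compare. What the paper does supply is partial evidence: Corollary~\ref{co:chi-infinity} establishes the inequality (with the sharper $\binom di\chi_\infty(F)\ge\beta^R_i(F)e(R)$ bounds) when $R$ is the localization at its homogeneous maximal ideal of a standard graded algebra over a field of positive characteristic; Proposition~\ref{pr:lech} shows that the conjecture implies Lech's conjecture; and Proposition~\ref{pr:Yhee-ring-ok} verifies it by a direct, case-by-case argument for Yhee's rings even though those rings admit no lim Ulrich or weakly lim Ulrich sequence (Theorem~\ref{th:nolimU}).

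With that clarified, your sketch accurately reconstructs the strategy the paper uses for the cases it can handle --- reduce to an algebraically closed residue field, show $[R]_d$ is a lim Ulrich point, then apply Theorem~\ref{th:lus} via $\chi_\infty(F)=\pair F{[R]_d}$ --- and, more importantly, you correctly locate where it breaks. First, Yhee's examples show some two-dimensional complete local domains admit no weakly lim Ulrich sequences at all, so $[R]_d$ cannot be a lim Ulrich point there; the method is intrinsically non-uniform, and the paper handles those rings instead by the elementary direct bound in Proposition~\ref{pr:Yhee-ring-ok} using the Hilbert--Burch theorem and Lech's inequality, not by a lim Ulrich argument. Second, the proposed degeneration to $\gr_{\fm}(R)$ is uncontrolled exactly as you say: $\gr(F)$ need not be a short complex with finite-length homology, and neither the semicontinuity nor even the nonnegativity of $\chi_\infty$ across the family is known in the required generality (nonnegativity of Dutta multiplicity in mixed or equal characteristic zero is itself a serious theorem of Kurano--Roberts built on localized Chern characters, and the monotonicity you need is stronger still). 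Your concluding assessment that the argument cannot be completed with the paper's tools is not a weakness of your write-up: it is precisely the authors' own position, which is why this appears as a conjecture and not a theorem.
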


We are no longer assuming $R$ contains a field of positive characteristic, so we have to recall what Dutta multiplicity is in this generality: The ring $R$, being complete, is a quotient of a regular local ring, so one has the Riemann-Roch isomorphism
\[
\tau\colon {\grog R}_\bbQ \xra{\ \cong\ } A_*(R)_{\bbQ} = \bigoplus_{i=0}^{d} A_i(R)_\bbQ
\]
where the target is the rational Chow group of $R$. Here $d=\dim R$ as usual. The map $\tau$ is hard to describe in general, except for the component in degree $d$, for  $A_d(R)_\bbQ$ has a basis consisting of classes $[V(R/\fp)]$, for $\fp\in \Lambda(R)$ as in \eqref{eq:g0}, and
\[
\tau_d \colon {\grog R}_{\bbQ} \lra \bbR^{\#\Lambda(R)} \quad\text{is induced by $M\mapsto (\length_{R_\fp}M_\fp)_{\fp\in\Lambda(R)}$.}
\]
Compare with Lemma~\ref{le:Feigen}. Writing $[R]_d$ for inverse image of $\tau_d(R)$, the Dutta multiplicity of $F$ in $\perf R$ is
\[
\chi_{\infty}(F) \colonequals \pair F{[R]_d}
\]
where $\pair --$ is the usual pairing; see \ref{ch:pairing}.  Usually, the Dutta multiplicity is defined using the action of $\knot R$ on the Chow group but the form above is better suited for our approach;  see \cites{Kurano/Roberts:2000, Roberts:1989, Piepmeyer/Walker:2009} for details.

We are going to prove that Conjecture~\ref{con:DuttaMultiplicity} implies Lech's  conjecture~\cite{Lech:1960}:

\begin{chunk}
\label{ch:lech}
Lech's conjecture: Given a flat local map $R\to S$ one has $e(R)\le e(S)$.
\end{chunk}

We refer to \cite{Ma:2017, Ma:2020} for a discussion on the history and current state of this conjecture, recording only that in \cite{Ma:2020}  it is verified when $R$ is standard graded. A key step in the proof is the construction of lim Ulrich sequences. Later in this section we exhibit rings that  have no lim Ulrich sequences, but for which Conjecture~\ref{con:DuttaMultiplicity} holds. The result below suggests an alternative approach to Lech's conjecture.

\begin{proposition}
\label{pr:lech}
If Conjecture \ref{con:DuttaMultiplicity} holds for all complete local rings, then Lech's conjecture holds.
\end{proposition}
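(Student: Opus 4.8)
The plan is to apply Conjecture~\ref{con:DuttaMultiplicity} to the base ring $R$ itself, using a short complex over $R$ manufactured from $S$ by restriction of scalars. Set $d\colonequals\dim R$. First I would carry out the usual reductions. Passing to completions and, via a gonflement as in \ref{ch:gonflament} performed compatibly on both rings, one may assume $R$ and $S$ are complete with a common algebraically closed residue field, without changing $e(R)$ or $e(S)$; one may also assume $d\ge 2$, the cases $d\le1$ being classical \cite{Lech:1960}. The crucial reduction is to the case where the closed fiber $S/\fm_R S$ is artinian, equivalently where $S$ is module-finite over $R$; this is standard (see \cite{Ma:2020})---informally, one replaces $S$ by $S/\bsy S$ for a sequence $\bsy$ that is simultaneously generic in $\fm_S$ and a system of parameters of the closed fiber, so that $S/\bsy S$ stays flat over $R$, acquires an artinian closed fiber, and keeps the same multiplicity. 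Being module-finite and flat over the local ring $R$, the ring $S$ is then a \emph{free} $R$-module.

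Next I would build the complex. Since the residue field of $S$ is infinite, choose a minimal reduction $\bss\colonequals s_1,\dots,s_d$ of $\fm_S$ (note $\dim S=d$), and let $G\colonequals\kos{\bss}S$. Then $G$ is a short complex in $\perf S$ with $\HH 0G=S/\bss S\ne 0$, and $\chi_S(G)=e_d(S)=e(S)$ by \eqref{eq:serre}. Viewing $G$ as a complex of $R$-modules via $R\into S$ and using that $S$ is $R$-free, $G|_R$ is a complex of finite free $R$-modules of length $d$; its homology modules $\HH iG$ have finite length over $S$, hence over $R$ (as $S$ is finite over $R$), and because $R$ and $S$ have the same residue field $\length_R\HH iG=\length_S\HH iG$. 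Thus $G|_R$ is a short complex in $\perf R$ with $\chi_R(G|_R)=\chi_S(G)=e(S)$.

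The heart of the matter is to show $\chi_{\infty}(G|_R)=\chi_R(G|_R)=e(S)$. By \eqref{eq:duttam} and the description of the kernel of \eqref{eq:g0}, it suffices to prove $\pair{G|_R}{[R/\fq]}=0$ for every prime $\fq$ of $R$ with $\dim(R/\fq)<d$: this makes $\pair{G|_R}{-}$ factor through the dimension-$d$ cycle map, on which $[R]$ and $[R]_d$ agree. For such a $\fq$, flatness of $S$ over $R$ gives $G|_R\otimes_R R/\fq\cong\bigl(\kos{\bss}{S/\fq S}\bigr)|_R$, so, again using the equality of residue fields, $\pair{G|_R}{[R/\fq]}=\chi(\bss;S/\fq S)=e_d(S/\fq S)$ by \eqref{eq:serre}. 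Since $\fq\subseteq\fm_R$, the closed fiber of the flat map $R/\fq\to S/\fq S$ is $S/\fm_R S$, which is artinian, whence $\dim_S(S/\fq S)=\dim(R/\fq)<d$; therefore $e_d(S/\fq S)=0$ by \ref{ch:ed-symbol}. This proves the claim, and then, since $R$ is complete and $G|_R$ is a short complex in $\perf R$, Conjecture~\ref{con:DuttaMultiplicity} yields $e(R)\le\chi_{\infty}(G|_R)=e(S)$, which is Lech's conjecture.

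The step I expect to be the main obstacle is the first one: the reduction to an artinian closed fiber (so that $S$ becomes module-finite over $R$) together with the compatible enlargement of the residue fields of $R$ and $S$. Once that machinery is in place, the construction of $G|_R$ and the identification of its Dutta multiplicity with $e(S)$ are short.
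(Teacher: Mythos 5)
Your plan differs from the paper's in one decisive respect: you want to apply Conjecture~\ref{con:DuttaMultiplicity} to the ring $R$ itself, which forces you to first arrange that $S$ is module-finite (indeed free) over $R$. The paper instead applies the conjecture to the middle ring $T$ of a Cohen factorization $R\to T\twoheadrightarrow S$, using that $e(T)=e(R)$ and that $T$ automatically shares the residue field of $S$. That choice is precisely what lets the paper avoid the step where your argument has a gap.

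The gap is in the very first reduction. The standard reduction (cited from \cite{Ma:2017}, Lemma 2.2) gives $R$ a complete domain, $S$ complete with algebraically closed residue field, and $\dim R=\dim S$; equivalently the closed fiber $S/\fm_R S$ is artinian \emph{as a ring}. But ``artinian closed fiber'' is \emph{not} equivalent to ``$S$ module-finite over $R$'': if the residue field extension $k_R\to k_S$ is not finite (e.g.\ $R=k[[x]]\to S=k(t)[[x]]$), the closed fiber is an artinian ring of infinite $k_R$-length, $S$ is not finite over $R$, and restriction along $R\hookrightarrow S$ does not send $\perf S$ to $\perf R$, so $G|_R$ is not even a legitimate object. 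You try to fix this by asserting that a ``gonflement performed compatibly on both rings'' lets one assume $R$ and $S$ have a common (algebraically closed) residue field, but you do not explain how to produce, for a flat local $R\to S$, gonflements $R\to R'$ and $S\to S'$ \emph{together with a flat local map} $R'\to S'$ fitting into a commuting square. This is not the construction of \ref{ch:gonflament}, which enlarges one ring at a time; a priori the gonflement $R'$ of $R$ along $k_R\to k_S$ need not admit any local $R$-algebra map to $S$, and even if one chooses a subring $R'\subseteq S$ abstractly isomorphic to such a gonflement there is no reason $R'\to S$ should be flat. Resolving exactly this difficulty is the content of the Cohen factorization step in the paper's proof: $T$ is flat over $R$ with \emph{regular} closed fiber, so $e(T)=e(R)$, and $T$ surjects onto $S$, so the two have the same residue field; one then works over $T$, paying for it by having to compare $\chi$ and $\chi_\infty$ of $F=\kos{\bsx}{T}\otimes_TP$ rather than of a Koszul complex.

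The remainder of your argument (the identification $\chi_\infty(G|_R)=\chi_R(G|_R)=e(S)$ via $\pair{G|_R}{[R/\fq]}=e_d^S(S/\fq S)=0$ for $\dim(R/\fq)<d$) is correct granted the module-finite hypothesis, and it is a genuinely nicer, more elementary route than the paper's Riemann--Roch computation over $T$. If you can supply a correct reduction to $S$ module-finite over $R$ (or, more modestly, cite a precise reference where such a reduction is actually proved), your proof would be a clean alternative. As it stands, the cited sources do not contain it, and the step should not be waved through as ``standard.''
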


\begin{proof}
Let $R\to S$ be a flat local map. A standard reduction allows one to assume $R$ is a complete local domain, $S$ is complete with algebraically closed residue field, and $\dim(S)=\dim(R)$; see \cite[Lemma 2.2]{Ma:2017}. Then one has a Cohen factorization:
\[
R \lra T \twoheadrightarrow S =T/J\,.
\]
where $R\to T$ is flat with regular closed fiber; see \cite[Theorem~1.1]{Avramov/Foxby/Herzog:1994}. In particular,  $e(R)=e(T)$ and the ideal $J$ of $T$ is perfect; see \cite[Lemma 3.5]{Ma:2017}. Moreover  all the fibers of $R\to T$ are complete intersections; see Taba\^a \cite[Th\'eor\`eme 2]{Tabaa:1984}.  In particular, since each minimal prime $\fp$ of $J$ contracts to $0$ in $R$, it follows that the local ring $T_\fp\cong T_\fp/(\fp \cap R)T_\fp$ is a complete intersection for every minimal prime $\fp$ of $J$.

The residue field of $T$ is infinite so we can pick elements $\bsx\colonequals x_1,\dots,x_d$ in $T$ that form part of a system of parameters on $T$ and whose images in $S$ generate a minimal reduction of its maximal ideal, say $\fn$. Let $P$ be a minimal $T$-free resolution of $S$. Since $J$ is perfect the length of $P$ is equal to $h\colonequals \height(J)$.

Set $n\colonequals \dim(T) = d+h$ and $F\colonequals \kos{\bsx} T\otimes_T P$;  observe that this is a short complex in $\mathcal{F}^{\fn}(T)$.  Conjecture \ref{con:DuttaMultiplicity}  yields $\chi_\infty(F)\geq e(T)=e(R)$. Moreover, since $\bsx$ generates a minimal reduction of $\fn$ in $S$, we know that $\chi(F)=e(S)$. Thus to complete the proof it suffices to verify:

\begin{claim}
With notation as above, we have $\chi(F)=\chi_\infty(F)$.
\end{claim}
The assignment $M\mapsto \sum_i (-1)^i [\HH i{P\otimes_TM}]$ induces a map
\[
 P \cap - \colon  \grog T \lra \grog {S}\,.
\]
Consider  the multiplicity 
\[
e_d(-) \colon \grog {S} \lra \bbZ\,;
\]
see Corollary~\ref{co:ed-symbol}.  We claim that there are equalities
\[
\chi(F) = e_d( P\cap [T]) \qquad \text{and}\qquad \chi_{\infty}(F) = e_d(P\cap [T]_n)\,.
\]
Indeed, both  equalities are special cases of the more generality equality
\[
e_d(P\cap - ) = \pair F- \qquad\text{on $\grog T$.}
\]
This holds because for any finitely generated $T$-module $M$ one has equalities
\begin{align*}
\pair FM
	& = \chi(F\otimes_RM) \\
	& = \chi( \kos{\bsx}T \otimes_T P \otimes_TM) \\
	& = \sum_i (-1)^i \chi ( \kos{\bsx}{\HH i{P\otimes_TM}}) \\
	&= \sum_i (-1)^i e_d(\HH i{P\otimes_TM}) \\
	&= e_d(P\cap M)\,.
\end{align*}

Let $\Lambda$ denote the minimal primes of $S$ defining its components of maximal dimension. Since $e_d(-)$ factors through the localization map
\[
\grog {S} \lra \bigoplus_{\fp\in\Lambda} \grog {S_{\fp}}
\]
it suffices to verify that for each minimal prime $\fp$ of $S$ one has
\[
 (P\cap [T])_{\fp} =  (P\cap [T]_n)_\fp\qquad \text{in $\grog {S_{\fp}}$.}
\]
The Riemann-Roch map $\tau$ commutes with localization, in that we have a commutative diagram
\[
\begin{tikzcd}
{\grog T}_{\bbQ}\ar[d] \ar[r, "\tau", "\cong" swap] & {A_*(T)}_{\bbQ}\ar[d] \\
{\grog {T_\fp}}_{\bbQ} \ar[r, "\tau_\fp", "\cong" swap] & {A_*(T_\fp)}_{\bbQ}
\end{tikzcd}
\]
where the map on the right has a degree shift by $h-n$; see \cite[Theorem~18.3 and \S20.1]{Fulton:1998}. Therefore one gets equalities
\[
([T]_n)_\fp = \tau_\fp^{-1} (\tau_n([T])_\fp ) = \tau_\fp^{-1} (\tau_h([T_\fp]) ) = [T_\fp]_h\,.
\]
This leads to following computation where the last one is clear from the definition:
\[
(P\cap [T]_n)_\fp  =  P_\fp \cap [T_\fp]_h  =  P_\fp \cap [T_\fp] =  (P\cap [T])_{\fp}
\]
The second equality holds because $[T_\fp]_h = [T_\fp]$, for $T_\fp$ is a complete intersection ring; see \cite[Proposition~12.4.4(2)]{Roberts:1998}, and also \cite[Corollary~18.1.2]{Fulton:1998}. This completes the proof of the claim and hence of the desired result.
\end{proof}

\subsection*{Non-existence of lim Ulrich sequences}
Recently Yhee~\cite{Yhee:2021} has constructed local rings that do not possess  lim Ulrich sequences. In the following paragraphs we state a version of her results that suits us, referring to \cite{Yhee:2021} for the more general statements and proofs.  Our main goal here is to verify that Conjecture~\ref{con:DuttaMultiplicity} holds for these rings nevertheless.

\begin{chunk}
\label{ch:wlcm}
Let $R$ be a local ring. As in \cite{Ma:2020}, a sequence $(U_n)_{n\geqslant 0}$ of finitely generated $R$-modules is said to be a \emph{weakly lim Cohen-Macaulay sequence} if each $U_n$ is nonzero, and for some system of parameters $\bsr\colonequals r_1,\dots,r_d$ of $R$ one has
\[
\lim_{n\to \infty} \frac {\chi_1(\kos{\bsr}{U_n})}{\nu_R(U_n)}=0,
\]
where $\chi_1(\kos{\bsr}{U_n})\colonequals \sum_{i=1}^d(-1)^{i-1}\length_RH_i(\bsr; U_n)$ is the first Euler characteristic of Koszul complex $\kos{\bsr}{U_n}$.  If in addition \eqref{eq:lus} holds, then  $(U_n)_{n\geqslant 0}$ is said to be a \emph{weakly lim Ulrich sequence}.
\end{chunk}

It is clear from the definitions \ref{ch:lcm}, \ref{ch:lus} and \ref{ch:wlcm} that any lim Cohen-Macaulay sequence is weakly lim Cohen-Macaulay, and that any lim Ulrich sequence is weakly lim Ulrich. In \cite{Ma: 2020}, it is proved that the existence of weakly lim Ulrich sequences implies Lech's conjecture, and it is asked whether every complete local domain of positive characteristic admits a lim Ulrich or weakly lim Ulrich sequence \cite[Question 3.9]{Ma: 2020}. Following \cite{Yhee:2021}, we describe a ring for which no such sequence exists.

\begin{chunk}
\label{ch:Yhee}
Let $k$ be a field and $S\colonequals k\pos{x,y}$, the ring of formal power series over $k$ in indeterminates $x,y$. Then $S$ is a local ring with maximal ideal $\fn\colonequals (x,y)$. Choose an $\fn$-primary ideal $J$ and set
\[
R\colonequals k + J \subseteq S
\]
Then $R$ is a local $k$-subalgebra of $S$, with maximal ideal $\fm\colonequals J$. Moreover $S/R$ has finite length, so $S$ is module-finite over $R$. 
\end{chunk}

The following result follows from \cite[Remark~3.7 and Theorem~5.1]{Yhee:2021}.

\begin{theorem}
\label{th:nolimU}
With notation as in \ref{ch:Yhee}, if $J$ is not generated by a system of parameters of $S$, then $R$ does not have weakly lim Ulrich sequence. \qed
\end{theorem}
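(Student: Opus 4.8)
The plan is to suppose that $R$ admits a weakly lim Ulrich sequence and derive a contradiction from the structure of the birational extension $R\subseteq S$, the point being that $J$ is ``too far'' from a parameter ideal. First I will record the relevant facts. Write $c\colonequals \ell_S(S/R)$; under the hypothesis $J$ is not all of $\fn_S$, so $R\ne S$ and $c\ge1$. The map $R\to S$ is module-finite and birational (for $s\in S$ one has $Js\subseteq JS=J\subseteq R$, so $s\in\operatorname{Frac}(R)$), and its conductor $\operatorname{ann}_R(S/R)$ is the proper ideal $\fm\colonequals J$ --- it contains $J$ because $J$ is an ideal of $S$ --- equivalently $\fm\cdot S=\fm$. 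As an $R$-module, $S$ is maximal Cohen--Macaulay of rank one, with $\nu_R(S)=\ell_R(S/\fm S)=\ell_S(S/J)=1+c$. Moreover $\ell_R(R/\fm^n)=\ell_S(S/J^n)-c$, so $e(R)$ equals the Hilbert--Samuel multiplicity $e(J)$ of the $\fn_S$-primary ideal $J$ in the two-dimensional regular ring $S$; since $e(J)\ge \ell_S(S/J)=1+c$ with equality exactly when $J$ is a parameter ideal, the hypothesis gives $e(R)=e(J)\ge 2+c$. Set $e_0\colonequals e(R)-(1+c)$, so $e_0\ge1$. Finally, as $R$ is a two-dimensional domain, $e_d(M)=\operatorname{rank}_R(M)\cdot e(R)$ for every finitely generated $R$-module $M$, by additivity of $e_d$ and its vanishing in dimension $<2$.

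The technical heart is the following estimate, for any system of parameters $\bsr$ of $R$ and any \emph{torsion-free} finitely generated $R$-module $M$:
\[
\nu_R(M)\ \le\ (1+c)\operatorname{rank}_R(M)+C\,\chi_1(\bsr;M),
\]
with $C$ depending only on $R\subseteq S$. To prove it I would pass to the $S$-module $MS$ generated by $M$ inside $M\otimes_R\operatorname{Frac}(R)$, using that $S$ is a two-dimensional regular local ring: $MS$ has projective dimension $\le1$ over $S$, and its reflexive hull is the free module $S^{\operatorname{rank}_R(M)}$. The chain of comparisons is $\nu_R(M)\le \nu_R(MS)+(\nu_R(\fm)-1)\ell_R(MS/M)$, from $\fm\cdot MS\subseteq M$; then $\nu_R(MS)\le(1+c)\nu_S(MS)$, since $S$ needs $1+c$ generators over $R$; then $\nu_S(MS)\le\operatorname{rank}_R(M)+\chi_1(\bsr;MS)$, because a minimal $S$-presentation matrix $\psi$ of $MS$ has entries in $\fn_S$, so its reduction $\bar\psi$ modulo $\bsr$ annihilates the socle of $S/\bsr S$ in each of its $\nu_S(MS)-\operatorname{rank}_R(M)$ columns, forcing $\ell_R H_1(\bsr;MS)=\ell_R\ker\bar\psi\ge\nu_S(MS)-\operatorname{rank}_R(M)$; and finally $\chi_1(\bsr;MS)\le\chi_1(\bsr;M)+\ell_R(MS/M)$ together with $\ell_R(MS/M)\le\chi_1(\bsr;M)$. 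The last inequality is where the conductor reappears: $MS/M$ is annihilated by $\fm$, hence a $k$-vector space, and the long exact Koszul homology sequence of $0\to M\to MS\to MS/M\to0$, combined with $H_2(\bsr;M)=H_2(\bsr;MS)=0$ (both are torsion-free over the domain $R$, so $r_1$ is a nonzerodivisor on them), embeds $MS/M$ into $H_1(\bsr;M)$, which has length $\chi_1(\bsr;M)$.

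Finally, let $(U_n)_{n\ge0}$ be a weakly lim Ulrich sequence witnessed by a system of parameters $\bsr$. Replacing each $U_n$ by its torsion-free quotient $V_n$ changes nothing asymptotically: for the torsion submodule $\tau_n$ one has $\nu_R(\tau_n)\le\ell_R(\tau_n/\bsr\tau_n)=\chi_1(\bsr;\tau_n)\le\chi_1(\bsr;U_n)=o(\nu_R(U_n))$ (the middle equality because $\dim\tau_n<2$; the last inequality a Koszul long-exact-sequence comparison using that $V_n$ is torsion-free), so $\nu_R(V_n)\sim\nu_R(U_n)$, $\chi_1(\bsr;V_n)=o(\nu_R(V_n))$, and $e_d(V_n)/\nu_R(V_n)\to1$. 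Thus we may assume each $U_n$ is torsion-free; then $e_d(U_n)=(1+c+e_0)\operatorname{rank}_R(U_n)$, and $e_d(U_n)/\nu_R(U_n)\to1$ forces $\nu_R(U_n)=(1+c+e_0+o(1))\operatorname{rank}_R(U_n)$, so in particular $\chi_1(\bsr;U_n)=o(\nu_R(U_n))=o(\operatorname{rank}_R(U_n))$. Substituting into the displayed estimate gives
\[
(1+c+e_0+o(1))\operatorname{rank}_R(U_n)\ \le\ (1+c)\operatorname{rank}_R(U_n)+C\cdot o(\operatorname{rank}_R(U_n)),
\]
hence $e_0\le o(1)$, contradicting $e_0\ge1$. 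The step I expect to be the main obstacle is the displayed estimate --- isolating the right measure of the failure of $M$ to be a free $S$-module and bounding it by $\chi_1(\bsr;M)$; the reductions (to the torsion-free case and the verification that $(V_n)$ remains weakly lim Ulrich) are routine once one tracks Koszul homology carefully.
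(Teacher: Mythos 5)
The paper itself offers no proof of this theorem: it is attributed to Yhee and stated with an immediate \(\square\). Your argument is therefore a self-contained alternative, and, having checked it step by step, I believe it is essentially correct. The strategy --- translating the problem to the regular overring \(S\) via \(M \mapsto MS\), establishing the key estimate
\[
\nu_R(M)\ \le\ (1+c)\operatorname{rank}_R(M) + C\,\chi_1(\bsr;M)
\]
for torsion-free \(M\), and then plugging in the asymptotics of a putative weakly lim Ulrich sequence to force \(e(R)\le 1+c\), contradicting \(e(R)=e(J,S)>\ell_S(S/J)=1+c\) --- is a natural and efficient route, and the Koszul homology bookkeeping (in particular the observation that \(\chi_1(\bsr;MS)=\ell\ker\bar\psi\ge \nu_S(MS)-\operatorname{rank}\) because \(\bar\psi\) kills the socle of \(S/\bsr S\)) is sound. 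Two small comments on precision, neither a gap: (i) the inequality \(\nu_R(M)\le\nu_R(MS)+(\nu_R(\fm)-1)\ell_R(MS/M)\) is unnecessarily weak; since \(\fm MS=(\fm S)M=\fm M\), the filtration \(\fm M\subseteq M\subseteq MS\) gives \(\nu_R(MS)=\nu_R(M)+\ell_R(MS/M)\), hence \(\nu_R(M)\le\nu_R(MS)\) outright; (ii) the bound \(\chi_1(\bsr;MS)\le\chi_1(\bsr;M)+\ell_R(MS/M)\) is cleanest from additivity of \(\chi(\bsr;-)\) together with \(\chi(\bsr;MS/M)=0\) and right-exactness of \(-\otimes R/\bsr R\), since the bare long exact sequence only gives the coefficient \(2\) on \(\ell_R(MS/M)\) (still adequate for a finite \(C\), so this is cosmetic). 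One genuinely unaddressed point: the equivalence ``\(e(J,S)=\ell_S(S/J)\) if and only if \(J\) is generated by a system of parameters of \(S\)'' is proved via minimal reductions and hence requires the residue field \(k\) to be infinite, whereas the theorem as stated allows \(k\) finite. This is repaired by passing to \(S'=S[t]_{\fn S[t]}\) (and \(R'=R[t]_{\fm R[t]}\)), which preserves \(e\), \(\ell(S/J)\), and \(\nu_S(J)\); the core estimate itself needs only that \(\bsr\) is a system of parameters on \(S\), which holds over any \(k\). With that remark inserted, your proof stands on its own as a complete argument, independent of the citation to Yhee.
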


Although the ring $R$  defined in \ref{ch:Yhee} has no weakly lim Ulrich sequences, Conjecture~\ref{con:DuttaMultiplicity}  holds for that ring.

\begin{proposition}
\label{pr:Yhee-ring-ok}
Let $R$ be as in \ref{ch:Yhee}. Each short complex $F$ in $\perf R$ satisifes
\[
\chi_{\infty}(F)\ge e(R)\,.
\]
\end{proposition}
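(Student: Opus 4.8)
The plan is to reduce everything to the normalization $S\colonequals k\pos{x,y}$ of $R$, which is regular, and exploit that for a regular ring the Dutta multiplicity is just the Euler characteristic. Write $\fn=(x,y)$ for the maximal ideal of $S$, so $\fm=J$, $\mathfrak m^n=J^n$ for all $n$, both residue fields equal $k$, and $S$ is module-finite over $R$ with the same fraction field; comparing Hilbert functions gives $e(R)=e(J,S)$, the Hilbert--Samuel multiplicity of the ideal $J$ in $S$. Also $\depth_RS=\depth_SS=2=\dim R$, so $S$ is a maximal Cohen--Macaulay $R$-module. By \ref{ch:gonflament} and \ref{ch:Dutta} we may assume $k$ is perfect; I will carry out the argument when $\chr k=p>0$, the equicharacteristic zero case being handled the same way with the Riemann--Roch formalism of \ref{ch:lech} replacing the Frobenius.

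The first step is to prove the identity
\[
\chi_\infty^R(F)=\chi_S(F\otimes_RS)=\length_S\bigl(\HH0F\otimes_RS\bigr).
\]
Since $S$ is MCM over $R$, depth sensitivity (\ref{ch:short}) over $S$ forces $\HH i{F\otimes_RS}=0$ for $i\ge1$, and $\HH 0{F\otimes_RS}=\HH0F\otimes_RS\ne0$; thus $G\colonequals F\otimes_RS$ is a short complex over $S$, hence (as $S$ is Cohen--Macaulay) the free resolution of $\HH0F\otimes_RS$, so $\chi_S(G)=\length_S(\HH0F\otimes_RS)$. For the first equality, the Frobenius projection formula gives $\chi((\vf_R^*)^nF)=\langle F,[\vf_{R*}^nR]\rangle_R=\chi(F\otimes_RR^{1/p^n})$; from the exact sequence $0\to R^{1/p^n}\to S^{1/p^n}\to \vf_{R*}^n(S/R)\to 0$, whose right-hand term has length $\length_R(S/R)$ independent of $n$, one gets $\chi(F\otimes_RR^{1/p^n})=\chi(F\otimes_RS^{1/p^n})+O(1)$; and since $S^{1/p^n}$ is $S$-free of rank $p^{2n}$ and $\length_R=\length_S$ on finite length $S$-modules, $\chi(F\otimes_RS^{1/p^n})=\chi_S(G\otimes_SS^{1/p^n})=\chi_S((\vf_S^*)^nG)$. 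Dividing by $p^{2n}$ and letting $n\to\infty$, together with the fact that $S$, being regular, is numerically Roberts so $\chi_\infty^S(G)=\chi_S(G)$, yields the claim.

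It then remains to prove $\length_S(\HH0F\otimes_RS)\ge e(J,S)=e(R)$. We may take $F$ minimal over $R$, so its differentials have entries in $\fm=J$; then $G=F\otimes_RS$ is a \emph{minimal} free resolution over $S$ of $N\colonequals \HH0F\otimes_RS$ all of whose differentials have entries in $J\subseteq\fn$. In particular $\operatorname{Im}(\partial_1^G)\subseteq J\cdot G_0$, so $\operatorname{ann}_S(N)\subseteq J$, and $N$ surjects onto $(S/J)^{\beta_0}$ with $\beta_0=\beta_0^R(F)$. When $N$ is cyclic, $N=S/I$ with $I\subseteq J$ an $\fn$-primary ideal, and Hilbert--Burch identifies $I$ with the ideal of maximal minors of the second differential $\partial_2^G$, whose entries lie in $J$; hence $I\subseteq J^{\,\nu_S(I)-1}$. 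If $\nu_S(I)=2$ then $I$ is a complete intersection generated by two elements of $J$, so $\length_S(S/I)=e(I,S)\ge e(J,S)$ since $I\subseteq J$; if $\nu_S(I)\ge3$ then $\length_S(S/I)\ge\length_S(S/J^2)\ge e(J,S)$ by a Hilbert-function estimate in the two-dimensional regular ring $S$ (e.g.\ $\length_S(J/J^2)\ge e(J,S)$). The non-cyclic case is reduced to this by working one generator of $N$ at a time, using $\operatorname{ann}_S(N)\subseteq J$ and Lech's inequality $e(J,S)\le d!\,\length_S(S/J)=2\length_S(S/J)$ to absorb the factor $\beta_0$. The main obstacle is exactly this last step: extracting the lower bound $\length_S(N)\ge e(J,S)$ from the structural fact that the minimal $S$-resolution of $N$ has differentials inside $J$ — the comparison of lengths with multiplicities is delicate and is where the hypothesis that $R$ is of the special form $k+J$ (rather than an arbitrary non-normal ring) is used.
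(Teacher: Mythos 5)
Your proposal is correct in substance, and the second half of the argument (Hilbert--Burch plus Lech's inequality over $S$) is essentially the paper's proof. But the first half takes a genuinely different route. The paper establishes $\chi_\infty(F)=\chi(F)=\chi(F\otimes_RS)$ by quoting Kurano's theorem that a two-dimensional complete local \emph{domain} is numerically Roberts (so $\chi_\infty=\chi$ on $\perf R$) and then observing that $[R]=[S]$ in ${\grog R}_{\bbQ}$ because $S/R$ has finite length. You instead run a direct Frobenius computation: $\chi((\vf_R^*)^nF)=\chi(F\otimes_RS^{1/p^n})+O(1)$ via the sequence $0\to R^{1/p^n}\to S^{1/p^n}\to(S/R)^{1/p^n}\to 0$, then use that $S^{1/p^n}$ is $S$-free of rank $p^{2n}$. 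This is self-contained (no appeal to Kurano) but only works directly when $\chr k=p>0$ with $k$ perfect; your sentence ``the equicharacteristic zero case being handled the same way with the Riemann--Roch formalism'' is a genuine gap, since the intermediate objects $R^{1/p^n}$, $S^{1/p^n}$ have no analogue and one would need to argue at the level of $\tau_d$ — which is essentially Kurano's argument anyway. The paper's route is characteristic-free.

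On the second half, your hedging about the non-cyclic case being ``the main obstacle'' is misplaced; in fact it is the easy case. You already noted $N\twoheadrightarrow(S/J)^{\beta_0}$, so if $\beta_0\ge 2$ then
\[
\length_S N\ \ge\ 2\,\length_S(S/J)\ \ge\ e(J,S)
\]
with the second inequality being Lech's inequality $e(J,S)\le 2!\,\length_S(S/J)$ applied to $J$. There is no need to ``work one generator at a time.'' Also, a small calibration: the hypothesis that $R$ has the form $k+J$ is consumed entirely in the first step (giving $[R]=[S]$ in ${\grog R}_\bbQ$, equivalently the bounded error term in your Frobenius computation); the length-versus-multiplicity comparison in the second step uses only that $S$ is a two-dimensional regular local ring and $J\subseteq\fn$ is $\fn$-primary.
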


\begin{proof}
One has that $e(R) = e(J,S)$, so the desired result is that $\chi_{\infty}(F) \ge e(J,S)$. Since $R$ is a complete local domain of dimension two one has $\chi_{\infty}(F)=\chi(F)$; see \cite[Proposition~3.7]{Kurano:2004}. Moreover as an $R$-module $S/R\cong (x,y)S/J$ and hence the classes of $S$ and $R$ coincide in ${\grog R}_{\bbQ}$. It follows that
\[
\chi_{\infty}(F) = \chi(F) = \chi(S\otimes_RF)\,.
\]
 Set $G\colonequals S\otimes_RF$. Evidently this is a short $S$-complex, and since $S$ is regular, in particular Cohen-Macaulay, $G$ has homology only in degree $0$, that is to say, it is a minimal free resolution of $\HH 0G$. So we have to verify the following: Given  the minimal free resolution of an $S$-module $M$ of finite length:
\[
G\colonequals 0\lra S^c \lra S^b \lra S^a\lra 0
\]
where the differential on $G$ satisfies $d(G)\subseteq JG$, then $\length_SM\ge e(J,S)$.

When $a=1$, so that $M$ is cyclic, we are in a situation where the Hilbert-Burch theorem applies and $G$ has the form
\[
G\colonequals 0\lra S^{b-1} \xra{\ d \ } S^{b} \xra{(s_1  \dots  s_b)} S\lra 0
\]
where $\bss\colonequals s_1,\dots,s_b$ are the minors of size $b-1$ of a matrix representing $d$. When $b=2$ the sequence $s_1,s_2$ is regular, which gives the second equality below:
\[
\length_SM = \length_S(S/(s_1,s_2)) = e((s_1,s_2),S) \ge e(J,S) \,,
\]
The other (in)equalities are clear. This settles the case $b=2$. When $b\ge 3$, as $d(S^{b-1})\subseteq J S^b$ it follows that $(\bss)\subseteq J^{b-1}$. Therefore we get
\[
\length_S M = \length_S (S/(\bss)) \ge \length_S (S/J^{b-1}) \ge \frac 12 e(J^{b-1},S)= \frac{(b-1)^2}{2} e(J,S)
\]
where the second inequality is Lech's inequality~\cite[Theorem~3]{Lech:1960}.   This is the desired estimate for $b\ge 3$.

When $a\ge 2$, one has
\[
\length_SM \ge 2\length_S{(S/J)}\ge e(J,S)
\]
where the second inequality again by Lech's inequality, now applied to $J$.
\end{proof}

\begin{bibdiv}
\begin{biblist}

 \bib{Apassov:1999}{article}{
   author={Apassov, Dmitri},
   title={Annihilating complexes of modules},
   journal={Math. Scand.},
   volume={84},
   date={1999},
   number={1},
   pages={11--22},
   issn={0025-5521},
   review={\MR{1681740}},
   doi={10.7146/math.scand.a-13929},
}

\bib{Avramov/Buchweitz/Iyengar/Miller:2010a}{article}{
   author={Avramov, Luchezar L.},
   author={Buchweitz, Ragnar-Olaf},
   author={Iyengar, Srikanth B.},
   author={Miller, Claudia},
   title={Homology of perfect complexes},
   journal={Adv. Math.},
   volume={223},
   date={2010},
   number={5},
   pages={1731--1781},
   issn={0001-8708},
   review={\MR{2592508}},
   doi={10.1016/j.aim.2009.10.009},
}

\bib{Avramov/Foxby:1998}{article}{
   author={Avramov, Luchezar L.},
   author={Foxby, Hans-Bj\o rn},
   title={Cohen-Macaulay properties of ring homomorphisms},
   journal={Adv. Math.},
   volume={133},
   date={1998},
   number={1},
   pages={54--95},
   issn={0001-8708},
   review={\MR{1492786}},
   doi={10.1006/aima.1997.1684},
}

\bib{Avramov/Foxby/Herzog:1994}{article}{
   author={Avramov, Luchezar L.},
   author={Foxby, Hans-Bj\o rn},
   author={Herzog, Bernd},
   title={Structure of local homomorphisms},
   journal={J. Algebra},
   volume={164},
   date={1994},
   number={1},
   pages={124--145},
   issn={0021-8693},
   review={\MR{1268330}},
   doi={10.1006/jabr.1994.1057},
}

\bib{Avramov/Iyengar/Miller:2006}{article}{
   author={Avramov, Luchezar L.},
   author={Iyengar, Srikanth},
   author={Miller, Claudia},
   title={Homology over local homomorphisms},
   journal={Amer. J. Math.},
   volume={128},
   date={2006},
   number={1},
   pages={23--90},
   issn={0002-9327},
   review={\MR{2197067}},
}

\bib{Bhatt/Hochster/Ma:2100}{article}{
author={Bhatt, Bhargav},
author={Ma, Linquan},
author={Hochster, Melvin},
title={Lim Cohen-Macaulay sequences},
status={in preparation},
}

\bib{BourbakiCAIX:2006}{book}{
   author={Bourbaki, N.},
   title={\'{E}l\'{e}ments de math\'{e}matique. Alg\`ebre commutative. Chapitres 8 et 9},
   language={French},
   note={Reprint of the 1983 original},
   publisher={Springer, Berlin},
   date={2006},
   pages={ii+200},
   isbn={978-3-540-33942-7},
   isbn={3-540-33942-6},
   review={\MR{2284892}},
}

\bib{Brennan/Herzog/Ulrich:1987}{article}{
   author={Brennan, Joseph P.},
   author={Herzog, J\"{u}rgen},
   author={Ulrich, Bernd},
   title={Maximally generated Cohen-Macaulay modules},
   journal={Math. Scand.},
   volume={61},
   date={1987},
   number={2},
   pages={181--203},
   issn={0025-5521},
   review={\MR{947472}},
   doi={10.7146/math.scand.a-12198},
}

\bib{Bruns/Herzog:1998}{book}{
   author={Bruns, Winfried},
   author={Herzog, J{\"u}rgen},
   title={Cohen-Macaulay rings},
   series={Cambridge Studies in Advanced Mathematics},
   volume={39},
   edition={2},
   publisher={Cambridge University Press, Cambridge},
   date={1998},
   pages={xii+403},
   isbn={0-521-41068-1},
   review={\MR{1251956}},
}

\bib{Cox:1995}{article}{
   author={Cox, David A.},
   title={The homogeneous coordinate ring of a toric variety},
   journal={J. Algebraic Geom.},
   volume={4},
   date={1995},
   number={1},
   pages={17--50},
   issn={1056-3911},
   review={\MR{1299003}},
}

\bib{Dao/Kurano:2014}{article}{
   author={Dao, Hailong},
   author={Kurano, Kazuhiko},
   title={Hochster's theta pairing and numerical equivalence},
   journal={J. K-Theory},
   volume={14},
   date={2014},
   number={3},
   pages={495--525},
   issn={1865-2433},
   review={\MR{3349324}},
   doi={10.1017/is014006030jkt273},
}

\bib{Fulton:1998}{book}{
   author={Fulton, William},
   title={Intersection theory},
   series={Ergebnisse der Mathematik und ihrer Grenzgebiete. 3. Folge. A
   Series of Modern Surveys in Mathematics [Results in Mathematics and
   Related Areas. 3rd Series. A Series of Modern Surveys in Mathematics]},
   volume={2},
   edition={2},
   publisher={Springer-Verlag, Berlin},
   date={1998},
   pages={xiv+470},
   isbn={3-540-62046-X},
   isbn={0-387-98549-2},
   review={\MR{1644323}},
   doi={10.1007/978-1-4612-1700-8},
}

\bib{Gillet/Soule:1987}{article}{
   author={Gillet, H.},
   author={Soul\'{e}, C.},
   title={Intersection theory using Adams operations},
   journal={Invent. Math.},
   volume={90},
   date={1987},
   number={2},
   pages={243--277},
   issn={0020-9910},
   review={\MR{910201}},
   doi={10.1007/BF01388705},
}

\bib{sga2:2005}{book}{
   author={Grothendieck, Alexander},
   title={Cohomologie locale des faisceaux coh\'{e}rents et th\'{e}or\`emes de
   Lefschetz locaux et globaux (SGA 2)},
   language={French},
   series={Documents Math\'{e}matiques (Paris) [Mathematical Documents (Paris)]},
   volume={4},
   note={S\'{e}minaire de G\'{e}om\'{e}trie Alg\'{e}brique du Bois Marie, 1962;
   Augment\'{e} d'un expos\'{e} de Mich\`ele Raynaud. [With an expos\'{e} by Mich\`ele
   Raynaud];
   With a preface and edited by Yves Laszlo;
   Revised reprint of the 1968 French original},
   publisher={Soci\'{e}t\'{e} Math\'{e}matique de France, Paris},
   date={2005},
   pages={x+208},
   isbn={2-85629-169-4},
   review={\MR{2171939}},
}

\bib{Backelin/Herzog/Ulrich:1991}{article}{
   author={Herzog, J.},
   author={Ulrich, B.},
   author={Backelin, J.},
   title={Linear maximal Cohen-Macaulay modules over strict complete
   intersections},
   journal={J. Pure Appl. Algebra},
   volume={71},
   date={1991},
   number={2-3},
   pages={187--202},
   issn={0022-4049},
   review={\MR{1117634}},
   doi={10.1016/0022-4049(91)90147-T},
}

\bib{Hochster:2017}{article}{
   author={Hochster, Melvin},
   title={Homological conjectures and lim Cohen-Macaulay sequences},
   conference={
      title={Homological and computational methods in commutative algebra},
   },
   book={
      series={Springer INdAM Ser.},
      volume={20},
      publisher={Springer, Cham},
   },
   date={2017},
   pages={173--197},
   review={\MR{3751886}},
}

\bib{Iyengar:1999}{article}{
   author={Iyengar, S.},
   title={Depth for complexes, and intersection theorems},
   journal={Math. Z.},
   volume={230},
   date={1999},
   number={3},
   pages={545--567},
   issn={0025-5874},
   review={\MR{1680036}},
   doi={10.1007/PL00004705},
}

\bib{Iyengar/Walker:2018}{article}{
   author={Iyengar, Srikanth B.},
   author={Walker, Mark E.},
   title={Examples of finite free complexes of small rank and small
   homology},
   journal={Acta Math.},
   volume={221},
   date={2018},
   number={1},
   pages={143--158},
   issn={0001-5962},
   review={\MR{3877020}},
   doi={10.4310/ACTA.2018.v221.n1.a4},
}

\bib{Kunz:2008}{book}{
   author={Kunz, Ernst},
   title={Residues and duality for projective algebraic varieties},
   series={University Lecture Series},
   volume={47},
   note={With the assistance of and contributions by David A. Cox and Alicia
   Dickenstein},
   publisher={American Mathematical Society, Providence, RI},
   date={2008},
   pages={xiv+158},
   isbn={978-0-8218-4760-2},
   review={\MR{2464546}},
   doi={10.1090/ulect/047},
}

\bib{Kurano:2004}{article}{
   author={Kurano, Kazuhiko},
   title={Numerical equivalence defined on Chow groups of Noetherian local
   rings},
   journal={Invent. Math.},
   volume={157},
   date={2004},
   number={3},
   pages={575--619},
   issn={0020-9910},
   review={\MR{2092770}},
   doi={10.1007/s00222-004-0361-8},
}

\bib{Kurano:1996}{article}{
   author={Kurano, Kazuhiko},
   title={A remark on the Riemann-Roch formula on affine schemes associated
   with Noetherian local rings},
   journal={Tohoku Math. J. (2)},
   volume={48},
   date={1996},
   number={1},
   pages={121--138},
   issn={0040-8735},
   review={\MR{1373176}},
   doi={10.2748/tmj/1178225414},
}

\bib{Kurano/Roberts:2000}{article}{
   author={Kurano, Kazuhiko},
   author={Roberts, Paul C.},
   title={Adams operations, localized Chern characters, and the positivity
   of Dutta multiplicity in characteristic $0$},
   journal={Trans. Amer. Math. Soc.},
   volume={352},
   date={2000},
   number={7},
   pages={3103--3116},
   issn={0002-9947},
   review={\MR{1707198}},
   doi={10.1090/S0002-9947-00-02589-7},
}

\bib{Lech:1960}{article}{
   author={Lech, Christer},
   title={Note on multiplicities of ideals},
   journal={Ark. Mat.},
   volume={4},
   date={1960},
   pages={63--86 (1960)},
   issn={0004-2080},
   review={\MR{140536}},
   doi={10.1007/BF02591323},
}

\bib{Ma:2020}{article}{
author={Ma, Linquan},
title={Lim Ulrich sequence: A proof of Lech's conjecture for graded base rings},
eprint={https://arxiv.org/abs/2005.02338},
}

\bib{Ma:2014}{book}{
author={Ma, Linquan},
title={The Frobenius endomorphism and multiplicities},
date={2014},
note={Thesis (Ph.\ D.)--University of Michigan},
}

\bib{Ma:2017}{article}{
   author={Ma, Linquan},
   title={Lech's conjecture in dimension three},
   journal={Adv. Math.},
   volume={322},
   date={2017},
   pages={940--970},
   issn={0001-8708},
   review={\MR{3720812}},
   doi={10.1016/j.aim.2017.10.032},
}

\bib{McDonnell:2011}{article}{
   author={McDonnell, Lori},
   title={A note on a conjecture of Watanabe and Yoshida},
   journal={Comm. Algebra},
   volume={39},
   date={2011},
   number={11},
   pages={3993--3997},
   issn={0092-7872},
   review={\MR{2855106}},
   doi={10.1080/00927872.2010.512587},
}

\bib{Peskine/Szpiro:1973}{article}{
   author={Peskine, C.},
   author={Szpiro, L.},
   title={Dimension projective finie et cohomologie locale. Applications \`a
   la d\'{e}monstration de conjectures de M. Auslander, H. Bass et A.
   Grothendieck},
   language={French},
   journal={Inst. Hautes \'{E}tudes Sci. Publ. Math.},
   number={42},
   date={1973},
   pages={47--119},
   issn={0073-8301},
   review={\MR{374130}},
}

\bib{Piepmeyer/Walker:2009}{article}{
   author={Piepmeyer, Greg},
   author={Walker, Mark E.},
   title={A new proof of the New Intersection Theorem},
   journal={J. Algebra},
   volume={322},
   date={2009},
   number={9},
   pages={3366--3372},
   issn={0021-8693},
   review={\MR{2567425}},
   doi={10.1016/j.jalgebra.2007.09.015},
}

\bib{Roberts:1998}{book}{
   author={Roberts, Paul C.},
   title={Multiplicities and Chern classes in local algebra},
   series={Cambridge Tracts in Mathematics},
   volume={133},
   publisher={Cambridge University Press, Cambridge},
   date={1998},
   pages={xii+303},
   isbn={0-521-47316-0},
   review={\MR{1686450}},
   doi={10.1017/CBO9780511529986},
}

\bib{Roberts:1987}{article}{
   author={Roberts, Paul},
   title={Le th\'{e}or\`eme d'intersection},
   language={French, with English summary},
   journal={C. R. Acad. Sci. Paris S\'{e}r. I Math.},
   volume={304},
   date={1987},
   number={7},
   pages={177--180},
   issn={0249-6291},
   review={\MR{880574}},
}

\bib{Roberts:1989}{article}{
   author={Roberts, Paul},
   title={Intersection theorems},
   conference={
      title={Commutative algebra},
      address={Berkeley, CA},
      date={1987},
   },
   book={
      series={Math. Sci. Res. Inst. Publ.},
      volume={15},
      publisher={Springer, New York},
   },
   date={1989},
   pages={417--436},
   review={\MR{1015532}},
}

\bib{Tabaa:1984}{article}{
   author={Taba\^{a}, Mohamed},
   title={Sur les homomorphismes d'intersection compl\`ete},
   language={French, with English summary},
   journal={C. R. Acad. Sci. Paris S\'{e}r. I Math.},
   volume={298},
   date={1984},
   number={18},
   pages={437--439},
   issn={0249-6291},
   review={\MR{750740}},
}

\bib{Walker:2017}{article}{
   author={Walker, Mark E.},
   title={Total Betti numbers of modules of finite projective dimension},
   journal={Ann. of Math. (2)},
   volume={186},
   date={2017},
   number={2},
   pages={641--646},
   issn={0003-486X},
   review={\MR{3702675}},
   doi={10.4007/annals.2017.186.2.6},
}

\bib{Weibel:2013}{book}{
   author={Weibel, Charles A.},
   title={The $K$-book},
   series={Graduate Studies in Mathematics},
   volume={145},
   note={An introduction to algebraic $K$-theory},
   publisher={American Mathematical Society, Providence, RI},
   date={2013},
   pages={xii+618},
   isbn={978-0-8218-9132-2},
   review={\MR{3076731}},
   doi={10.1090/gsm/145},
}

\bib{Yhee:2021}{article}{
author={Yhee, Farrah C.},
title={Ulrich modules and weakly lim Ulrich sequences do not always exist},
eprint={https://arxiv.org/abs/2104.05766},
}

\end{biblist}
\end{bibdiv}

\end{document}